      \def\dC{{\mathbb C}}
   \def\dN{{\mathbb N}}   
      \def\dR{{\mathbb R}}
   \def\dZ{{\mathbb Z}}
\def\cJ{{\mathcal J}}      
   \def\cN{{\mathcal N}}
\def\wh#1{{{\widehat #1} }}
\def\bm\chi{\mbox{\boldmath$\chi$}}
\def\diag{{\rm diag\,}}
\def\e{{\rm e}}
\let\xker=\ker \def\ker{{\xker\,}}
\def\e{\varepsilon}
\def\kk{\kappa}
\newtheorem{thm}{Theorem}[section]
\newtheorem{cor}[thm]{Corollary}
\newtheorem{prop}[thm]{Proposition}
\theoremstyle{definition}
\newtheorem{defn}[thm]{Definition}
\theoremstyle{remark}
\newtheorem{rem}[thm]{Remark}
\theoremstyle{remark}
\numberwithin{equation}{section}
\begin{document}

\title[Darboux transformations and Pad\'e approximation]
{Darboux transformations of  Jacobi matrices \\ and Pad\'e approximation}

\author{Maxim Derevyagin}
\address{
Maxim Derevyagin\\
Department of Mathematics MA 4-5\\
Technische Universit\"at Berlin\\
Strasse des 17. Juni 136\\
D-10623 Berlin\\
Germany}
\email{derevyagin.m@gmail.com}

\author[Vladimir Derkach]{Vladimir Derkach}
\address{Vladimir Derkach\\
Department of Mathematics \\
Donetsk National University \\
Universitetskaya str. 24 \\
83055 Donetsk\\ Ukraine}  \email{derkach.v@gmail.com}
\date{\today}

 \subjclass{Primary 47B36; Secondary 30E05, 42C05.}
\keywords{Darboux transformation, monic Jacobi matrix, monic generalized Jacobi matrix,
triangular factorization, orthogonal polynomials,
Pad\'e approximants, indefinite inner  product.}
\begin{abstract}
Let $J$ be a monic Jacobi matrix associated with the
Cauchy transform $F$ of a probability measure.
We construct a pair of the lower and upper triangular
block matrices $L$ and $U$ such that $J=LU$ and the matrix
${\mathfrak J}_C=UL$ is a monic generalized Jacobi matrix associated
with the function ${\mathfrak F}_C(\lambda)=\lambda F(\lambda)+1$.
It turns out that the Christoffel transformation ${\mathfrak J}_C$
of a bounded monic Jacobi matrix $J$ can be unbounded. This
phenomenon is shown to be related to the effect of accumulating at
$\infty$ of the poles of the Pad\'e approximants of the function
${\mathfrak F}_C$ although ${\mathfrak F}_C$ is holomorphic at $\infty$.

The case of the $UL$-factorization of $J$ is considered as well.
\end{abstract}
\maketitle

\section{Introduction}
Let $\sigma$ be a probability measure with an infinite
support contained in $\dR$. Also, assume that all the moments of $\sigma$, i.e.
\[
\int_{\dR}t^jd\sigma(t),\quad j\in\dZ_+,
\]
are finite.
It is well known that the sequence
$\{P_j\}_{j=0}^{\infty}$ of monic polynomials  orthogonal with
respect to $\sigma$ satisfies a three-term recurrence
relation~\cite{Ach61}
\begin{equation}\label{recrelmonp}
\lambda P_j(\lambda) = P_{j+1}(\lambda) + b_jP_j(\lambda) + c_{j-1}P_{j-1}(\lambda),\quad j\in\dZ_+
\end{equation}
with the initial conditions
\[
P_{-1}(\lambda)=0,\quad P_{0}(\lambda) =1,
\]
where $b_j\in\dR$ and $c_j>0$, $j\in\dZ_+$.
Recall that the polynomials $P_j$ are also called the polynomials of the first kind
corresponding to $\sigma$. Besides the polynomials of the first kind,
one can also associate with $\sigma$
polynomials $Q_j$ of the second kind as solutions of~\eqref{recrelmonp}
with the following initial conditions
\[
Q_{-1}(\lambda)=-1,\quad Q_{0}(\lambda) =0.
\]
Clearly, the relation~\eqref{recrelmonp} can be rewritten as follows
\[
Jp(\lambda)=\lambda p(\lambda),
\]
where  $p = (P_0, P_1, P_2,\dots)^{\top}$ and  $J$ is a semi-infinite tridiagonal matrix
of the form
\begin{equation}\label{monJac}
J=\begin{pmatrix}
{b}_{0}   & 1 &       &\\
{c}_{0}   &{b}_1    &{1}&\\
        &{c}_1    &{b}_{2} &\ddots\\
&       &\ddots &\ddots\\
\end{pmatrix}
\end{equation}
The matrix $J$ is said to be {\it the monic Jacobi matrix associated with $\sigma$}.

As is known~\cite{BM04}, a monic Jacobi matrix $J$ admits the unique
$LU$-factorization $J=LU$ with the bidiagonal
matrices $L$ and $U$ having the forms
\begin{equation}\label{LandUdef}
L=\begin{pmatrix}
1   & 0 &       &\\
{l}_{1}   & 1    &0 &\\
        &{l}_2    &1 &\ddots\\
&       &\ddots &\ddots\\
\end{pmatrix},\quad
U=\begin{pmatrix}
{u}_{1}   & 1 &       &\\
{0}   &{u}_2    &{1}&\\
        &0    &{u}_{3} &\ddots\\
&       &\ddots &\ddots\\
\end{pmatrix}.
\end{equation}
if and only if the condition
\begin{equation}\label{ChristCondDef}
P_j(0)\ne 0, \quad j\in\dZ_+
\end{equation}
is satisfied.
{\it The Christoffel transformation} of $J$ is defined as follows
\[
J=LU \rightarrow J_C:=UL.
\]
The matrix $J_{C}$ is the tridiagonal matrix associated to the
measure $td\sigma(t)$~\cite{BM04}.

On the other hand, if $s_{-1}$ is a real number such that
\begin{equation}\label{GerCondDef}
Q_j(0)-s_{-1}P_j(0)\ne 0, \quad j\in\dZ_+,
\end{equation}
then the monic Jacobi matrix $J$ admits  the $UL$-factorization
\[
J=UL
\]
where $U$ and $L$ are defined by~\eqref{LandUdef}. The matrix
\[
J_{G}:=LU
\]
is clearly a tridiagonal matrix and it is
called {\it the Geronimus transformation of $J$ with the parameter $s_{-1}$}.
Notice that  the $LU$-factorization of a monic Jacobi matrix is unique while the $UL$-factorization
depends on the free parameter $s_{-1}$.
Both the Christoffel and Geronimus transformation are also called {\it the Darboux transformations}~\cite{BM04}.
The study of discrete Darboux transformations was originated
in~\cite{Mat79} and has been further developed in~\cite{GH96},~\cite{SpZh95},~\cite{Zh97}.
Also, note that discrete Darboux transformations are applied in bispectral problems~\cite{GH97},
Toda lattices~\cite{MatS79} and numerical linear algebra~\cite{KG}.
We refer the reader to~\cite{BM04} for more details.

In the present paper we extend the Darboux transformations beyond
the conditions~\eqref{ChristCondDef} and~\eqref{GerCondDef}. Namely,
for an arbitrary monic Jacobi matrix $J$ a pair of lower and
upper triangular block matrices $L$ and $U$ is constructed such that
$J=LU$ and the matrix
\[
 {\mathfrak J}_C=UL
\]
is a monic generalized Jacobi matrix in the sense of~\cite{DD09}, i.e. ${\mathfrak J}_C$ is a tridiagonal
block matrix with blocks specified below in Definition~\ref{defGJM}.

Similarly, for any monic Jacobi matrix $J$ and the free parameter $s_{-1}$ the $UL$-factorization
$J={\mathfrak U}{\mathfrak L}$ with the lower and upper triangular block matrices
${\mathfrak L}$ and ${\mathfrak U}$ is found. In this case the Geronimus transform
\[
 {\mathfrak J}_G={\mathfrak L}{\mathfrak U}
\]
is also a tridiagonal block matrix (a monic generalized Jacobi matrix).

Recall that the monic Jacobi matrix $J$ can be also associated with
the Nevanlinna function (see~\cite{Ach61})
\[
 F(\lambda)=\int_{\dR}\frac{d\sigma(t)}{t-\lambda}.
\]
Analogously, one can associate the
generalized Jacobi matrices ${\mathfrak J}_C$ and ${\mathfrak J}_G$
with the functions ${\mathfrak F}_C$ and ${\mathfrak F}_G$, respectively (see~\cite{DD09}).
We show that for the case in question they have the following forms
\[
{\mathfrak F}_C(\lambda)=\lambda F(\lambda)+1,\quad {\mathfrak
F}_G(\lambda)=-\frac{{s}_{-1}}{\lambda}+
\frac{{F}(\lambda)}{\lambda},
\]
where 
$s_{-1}$ is a free parameter.

It turns out that the Christoffel transformation ${\mathfrak J}_C$
of a bounded monic Jacobi matrix can be unbounded, that is, the
entries of ${\mathfrak J}_C$ are not necessarily bounded. In
Section~6 we relate this phenomenon to the effect of accumulating at
$\infty$ of the poles of the Pad\'e approximants to ${\mathfrak
F}_C$. In fact, we show that the $LU$-factorization of $J$ is
bounded in the sense that $L$ and $U$ are bounded if and only if the
diagonal Pad\'e approximants to the corresponding function
${\mathfrak F}_C$ converge to ${\mathfrak F}_C$ locally uniformly
outside of a finite interval of the real line. A similar situation
takes place in the case of the Geronimus transformation.

The paper is organized as follows. In Section~2 we recall some basic facts and definitions.
In particular, the definition of monic generalized Jacobi matrices is given.
The Darboux transformations for arbitrary Jacobi matrices as well as for monic
generalized Jacobi matrices are presented in Sections~3 and~4.
Next section deals with triangular factorizations of arbitrary symmetric Jacobi matrices.
In Section~6, we restrict our consideration to the case of probability measures supported
on $[-1,1]$ and some criteria for the locally uniform convergence, outside of a finite interval, of the diagonal Pad\'e approximants to the functions ${\mathfrak F}_C$ and ${\mathfrak F}_G$ in terms of the $LU$- and $UL$-factorizations are obtained. Finally, in Section~7 we provide the reader with some concrete examples.

\section{Preliminaries}

\subsection{Classes ${\bf D}_{-\infty}^+$ and
${\bf D}^-_{-\infty}$.} Let  ${\mathbf N}$ be the class of all
Nevanlinna functions which map the upper half plane $\dC_+$ into the
upper half plane $\dC_+$. We will say (cf.~\cite{HSWi98}) that a
Nevanlinna function $F\in {\mathbf N}$ belongs to
the class ${\bf N}_{-2n}$ if  for some numbers $
s_0,\dots,s_{2n}\in\dR$ the
following asymptotic expansion holds true
\begin{equation}\label{asymp}
F(\lambda)= 
-\frac{s_{0}}{\lambda}-\frac{s_{1}}{\lambda^2}
-\dots-\frac{s_{2n}}{\lambda^{2n+1}}+o\left(\frac{1}{\lambda^{2n+1}}\right),
\quad\lambda\widehat{\rightarrow }\infty,
\end{equation}
where $\lambda\widehat{\rightarrow }\infty$ means that $\lambda$
tends to $\infty$ nontangentially, i.e. inside the sector
$\varepsilon<\arg \lambda<\pi-\varepsilon$ for some $\varepsilon>0$.
Let us set
\[
{\bf N}_{-\infty}:=\bigcap_{n\ge 0}{\bf N}_{-2n}.
\]

Recall~\cite{Ach61} that $F\in{\bf N}_{-\infty}$ if and only if it
admits the following representation
\[
F(\lambda)=\int_{\dR}\frac{d\sigma(t)}{t-\lambda},
\]
where $\sigma$ is a bounded measure supported on the real line and
$s_j=\int_{\dR}t^jd\sigma(t)$, $j\in\dZ_+$.
It will be sometimes convenient to use the following notation
\begin{equation}\label{eq:asymp}
F(\lambda)\sim-\sum_{j=0}^\infty
\frac{s_j}{{\lambda}^{j+1}},\quad\lambda\widehat{\rightarrow }\infty
\end{equation}
to denote the validity of~\eqref{asymp} for all $n\in\dZ_+$. The
Jacobi matrix $J$ associated with the probability measure
$\sigma/s_0$ will be also called the Jacobi matrix associated with
$F\in {\bf N}_{-\infty}$.

\begin{defn}\label{deffunct+}
Let us say that a function ${\mathfrak F}$ meromorphic in $\dC_+$
belongs to the class ${\bf D}_{-\infty}^+$ if it admits the
asymptotic expansion
\begin{equation}\label{asympF}
{\mathfrak F}(\lambda)\sim-\frac{{\mathfrak
s}_{0}}{\lambda}-\frac{{\mathfrak
s}_{1}}{\lambda^{2}}-\dots-\frac{{\mathfrak s}_{2n}}{\lambda^{2n+1}}
-\dots,\quad\lambda\widehat{\rightarrow }\infty,
\end{equation}
with some ${\mathfrak s}_j\in\dR$ $(j\in\dZ_+)$ and
$\lambda{\mathfrak F}(\lambda)+{\mathfrak s}_{0}\in{\bf
N}_{-\infty}$.
\end{defn}

\begin{defn}\label{deffunct-}
Let us say that a function ${\mathfrak F}$ meromorphic in $\dC_+$
belongs to the class ${\bf D}_{-\infty}^-$ if
there exists a number ${\mathfrak
s}_{-1}\in\dR\setminus\{0\}$ such that
\[
 -\frac{{\mathfrak s}_{-1}}{\lambda}+
\frac{{\mathfrak F}(\lambda)}{\lambda}\in{\bf N}_{-\infty}.
 \]
\end{defn}
In what follows we use the Gothic script for all the notations
associated with the ${\bf D}_{-\infty}^\pm$-functions and the Roman
script for the ${\bf N}_{-\infty}$-functions to avoid confusion.

\subsection{The Schur transform of the ${\bf D}_{-\infty}^\pm$-functions.}

Let ${\mathfrak F}$ be a nonrational function from ${\bf
D}_{-\infty}^\pm$ having an asymptotic expansion of the
form~\eqref{asympF}. Define the set $\cN({\mathfrak s})$ of
normal indices of the sequence ${\mathfrak s}=\{{\mathfrak
s}_{i}\}_{i=0}^\infty$ by
\begin{equation}\label{NormIndFrak}
    \cN({\mathfrak s})=\{{\mathfrak n}_j:\det ({\mathfrak s}_{i+k})_{i,k=0}^{{\mathfrak n}_j-1} \ne 0, \quad j=1,2,\dots\}.
\end{equation}

We will say that the function ${\mathfrak F}\in {\bf D}_{-\infty}^\pm$
is {\it normalized} if the first nontrivial coefficient in its asymptotic expansion~\eqref{asympF} has modulus 1, i.e.
$|{\mathfrak s}_{{\mathfrak n}_1-1}|=1$.

The following statement can be found in~\cite[Theorem~4.8]{DD09}.
\begin{thm}\label{GJM}
Let ${\mathfrak F}$ be a nonrational normalized function of
the class ${\bf D}_{-\infty}^\pm$, let the sequence ${\mathfrak
s}=\{{\mathfrak s}_{j}\}_{j=0}^\infty$ be defined by the asymptotic
expansion~\eqref{asympF}, and let $\cN({\mathfrak s})=\{{\mathfrak
n}_i\}_{i=1}^\infty$ be a set of normal indices of ${\mathfrak
s}$. Then ${\mathfrak F}$ admits the expansion into the following
$P$-fraction
\begin{equation}\label{PfractionG}
-\frac{\epsilon_0}{{\mathfrak p}_0(\lambda)}
\begin{array}{l} \\ -\end{array}
\frac{\epsilon_0\epsilon_1{\mathfrak b}_0^2}{{\mathfrak p}_1(\lambda)}
\begin{array}{ccc} \\ - & \cdots & -\end{array}
\frac{\epsilon_{N-1}\epsilon_N {\mathfrak b}_{N-1}^2}{{\mathfrak
p}_N(\lambda)}\begin{array}{l}\\-\dots\end{array},
\end{equation}
where ${\mathfrak p}_i$ are monic polynomials of degree ${\mathfrak k}_i:={\mathfrak n}_{i+1}-{\mathfrak n}_{i}$
  $(\le 2)$, $\epsilon_i=\pm 1$, ${\mathfrak b}_i>0$, $i\in\dZ_+$, and $\mathfrak{n}_0=0$.
 \end{thm}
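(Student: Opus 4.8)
The plan is to build the $P$-fraction~\eqref{PfractionG} by a Schur-type algorithm that strips off one partial quotient at a time, controlling the degrees ${\mathfrak k}_i$ and the signs of the coefficients through the defining relation of ${\bf D}_{-\infty}^\pm$, which ties ${\mathfrak F}$ to a genuine ${\bf N}_{-\infty}$-function. Write $D_n:=\det({\mathfrak s}_{i+k})_{i,k=0}^{n-1}$ for the Hankel determinants, so that by~\eqref{NormIndFrak} one has $\cN({\mathfrak s})=\{n:D_n\ne 0\}$ and ${\mathfrak k}_i={\mathfrak n}_{i+1}-{\mathfrak n}_i$.

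First I would isolate the purely algebraic content. For an arbitrary formal series $-\sum_j{\mathfrak s}_j\lambda^{-j-1}$ the theory of Pad\'e approximation attaches to each normal index ${\mathfrak n}_i$ the unique monic denominator of the associated diagonal Pad\'e approximant, and the recursion linking consecutive denominators is precisely a continued fraction whose $i$-th partial quotient has a monic polynomial denominator of degree ${\mathfrak n}_{i+1}-{\mathfrak n}_i$. Hence the existence of an expansion of the shape~\eqref{PfractionG} with monic ${\mathfrak p}_i$ of degree ${\mathfrak k}_i$ is formal and does not use the class; what the membership ${\mathfrak F}\in{\bf D}_{-\infty}^\pm$ must supply is (a) the bound ${\mathfrak k}_i\le 2$ and (b) the factorized form $\epsilon_{i-1}\epsilon_i{\mathfrak b}_{i-1}^2$ of the numerators, with $\epsilon_i=\pm1$ and ${\mathfrak b}_i>0$. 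I would therefore run the construction as an induction whose step extracts one block and passes to a Schur transform ${\mathfrak F}\mapsto{\mathfrak F}^{[1]}$; the key invariance to prove is that ${\mathfrak F}^{[1]}$ again lies in ${\bf D}_{-\infty}^\pm$ and is normalized, with normal indices those of ${\mathfrak F}$ shifted down by ${\mathfrak n}_1$, so that the degree and sign bounds are inherited once they are verified for the very first block.

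The central step is this first extraction. Using that $({\mathfrak s}_{i+k})_{i,k=0}^{{\mathfrak n}_1-1}$ is invertible (which is exactly the statement that ${\mathfrak n}_1$ is a normal index), I would solve for the monic ${\mathfrak p}_0$ of degree ${\mathfrak k}_0={\mathfrak n}_1$ matching the expansion to the required order, read off $\epsilon_0=\sg{\mathfrak s}_{{\mathfrak n}_1-1}$ and ${\mathfrak b}_0$ from the leading Hankel ratios (the normalization $|{\mathfrak s}_{{\mathfrak n}_1-1}|=1$ fixing the scale), and define ${\mathfrak F}^{[1]}$ by inverting the first convergent, i.e.\ by requiring
\[
{\mathfrak F}(\lambda)=\cfrac{-\epsilon_0}{{\mathfrak p}_0(\lambda)-\epsilon_0\epsilon_1{\mathfrak b}_0^2\,{\mathfrak F}^{[1]}(\lambda)}
\]
as asymptotic series. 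Checking ${\mathfrak F}^{[1]}\in{\bf D}_{-\infty}^\pm$ is where the defining relation does its work, since the above linear-fractional map preserves the Nevanlinna cone up to the explicit low-order corrections built into Definitions~\ref{deffunct+} and~\ref{deffunct-}.

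Finally, the degree bound, which I expect to be the main obstacle because it is the only place the precise ${\bf D}_{-\infty}^\pm$ hypothesis (as opposed to a generic formal series) is genuinely needed. For the ${}^+$ case the relation $\lambda{\mathfrak F}(\lambda)+{\mathfrak s}_0\in{\bf N}_{-\infty}$ together with the integral representation of ${\bf N}_{-\infty}$-functions says that the shifted sequence $({\mathfrak s}_1,{\mathfrak s}_2,\dots)$ is the moment sequence of a positive measure with infinite support (nondegenerate since ${\mathfrak F}$ is nonrational); thus all its Hankel determinants are strictly positive, and ${\mathfrak s}$ differs from this positive-definite sequence only by the single prepended entry ${\mathfrak s}_0$. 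The ${}^-$ case is analogous, with the positive-definite sequence coming from $-{\mathfrak s}_{-1}/\lambda+{\mathfrak F}(\lambda)/\lambda$. A single-moment modification changes the inertia of the Hankel forms by at most one negative square, so two consecutive Hankel determinants cannot both vanish: for instance in the ${}^+$ case, if ${\mathfrak s}_0=0$ then $D_2={\mathfrak s}_0{\mathfrak s}_2-{\mathfrak s}_1^2=-{\mathfrak s}_1^2<0$. Consequently no gap ${\mathfrak n}_{i+1}-{\mathfrak n}_i$ exceeds $2$, and the same dichotomy recurs after each Schur step. The factorized sign pattern $\epsilon_{i-1}\epsilon_i{\mathfrak b}_{i-1}^2$ then drops out of tracking the signs of the ratios $D_{{\mathfrak n}_{i+1}}/D_{{\mathfrak n}_i}$ through the recursion, and equality of the continued fraction with ${\mathfrak F}$ follows because its $N$-th convergent agrees with ${\mathfrak F}$ to an order tending to $\infty$ with $N$, while ${\mathfrak F}$, being nonrational, is determined by its full asymptotic series.
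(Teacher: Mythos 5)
Your overall strategy is the same as the paper's: the paper proves Theorem~\ref{GJM} by exactly the step-by-step Schur process you describe (extract a monic ${\mathfrak p}_0$ of degree ${\mathfrak n}_1$ from the first nondegenerate Hankel block, define $\wh{\mathfrak F}$ by $-1/{\mathfrak F}=\epsilon_0{\mathfrak p}_0+{\mathfrak b}_0^2\wh{\mathfrak F}$, check that the transform stays in the ${\bf D}$-classes, iterate), deferring details to \cite{DD09}. One small point of bookkeeping: the Schur transform sends ${\bf D}_{-\infty}^{\pm}$ to ${\bf D}_{-\infty}^{\mp}$ (the classes alternate), so the induction hypothesis must be stated for the union of the two classes, not literally ``${\mathfrak F}^{[1]}$ again lies in ${\bf D}_{-\infty}^{\pm}$''.

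There is, however, a genuine gap in your argument for the degree bound ${\mathfrak k}_i\le 2$, which you yourself identify as the crux. You argue: the shifted sequence is positive definite, ${\mathfrak s}$ differs from it by one prepended (resp.\ deleted) entry, ``a single-moment modification changes the inertia of the Hankel forms by at most one negative square, so two consecutive Hankel determinants cannot both vanish.'' This inference fails on both ends. First, the premise is false in the ${\bf D}_{-\infty}^-$ case: there ${\mathfrak s}_j=s_{j+1}$ with $(s_j)$ the moments of a positive measure $\sigma$, so the Hankel forms of ${\mathfrak s}$ are the moment forms of the signed measure $t\,d\sigma(t)$, whose negative index grows without bound when $\sigma$ charges the negative half-line. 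Second, even where the premise holds (the ${\bf D}_{-\infty}^+$ case), the conclusion does not follow: the sequence ${\mathfrak s}=(0,0,1,\dots)$ has Hankel forms with at most one negative square, yet $D_1=D_2=0$, so bounded inertia permits a gap of length $3$. (This sequence is of course not admissible here, but only because its \emph{shifted} Hankel determinant ${\mathfrak s}_1$ vanishes --- which is precisely the stronger fact you need and do not use beyond the first step.) The correct mechanism is to feed the strict positivity of the companion Hankel determinants into the Jacobi--Sylvester identity
\begin{equation*}
D_{n+1}^{(0)}\,D_{n-1}^{(2)}=D_n^{(0)}\,D_n^{(2)}-\bigl(D_n^{(1)}\bigr)^2,
\qquad D_n^{(k)}:=\det({\mathfrak s}_{i+j+k})_{i,j=0}^{n-1},
\end{equation*}
which shows that $D_n^{(0)}=D_{n+1}^{(0)}=0$ forces $D_n^{(1)}=0$; for ${\bf D}_{-\infty}^+$ this contradicts $D_n^{(1)}>0$ directly, while for ${\bf D}_{-\infty}^-$ one instead uses~\eqref{eq:Pfrak2} to write $D_n^{(0)}({\mathfrak s})=(-1)^nP_n(0)\det(s_{i+k})_{i,k=0}^{n-1}$ and invokes the coprimality of consecutive orthogonal polynomials of the positive sequence $(s_j)$ (cf.\ Proposition~\ref{Interlace} and the determinant computation in Proposition~\ref{prop:3.3}(i)). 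With this repair the rest of your outline goes through and matches the paper's proof.
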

For the convenience of the reader we sketch the proof. Actually, the proof is based on
the following step-by-step Schur process~\cite{DD09} (see also~\cite{ADL07},~\cite{De}).
If ${\mathfrak s}_0\ne 0$ then ${\mathfrak n}_1=1$ and the
polynomial ${\mathfrak p}_0$ is defined as follows
\[
 {\mathfrak p}_0(\lambda)=\lambda-\frac{{\mathfrak s}_1}{{\mathfrak s}_0}
=\frac{1 }{{\mathfrak s}_0}
 \begin{vmatrix}
    {\mathfrak s}_0 &{\mathfrak s}_1 \\
     1          & \lambda              \\
  \end{vmatrix}.
\]
If ${\mathfrak s}_0=0$ then ${\mathfrak s}_1\ne 0$, ${\mathfrak n}_1=1$, and
\[
    {\mathfrak p}_0(\lambda)=\frac{1 }{\det({\mathfrak s}_{i+j})_{i,j=0}^{1}}
    \begin{vmatrix}
    0           &  {\mathfrak s}_{1} & {\mathfrak s}_{2} \\
  {\mathfrak s}_{1} & {\mathfrak s}_{2} & {\mathfrak s}_{3} \\
     1          & \lambda   & \lambda^{2} \\
                                                              \end{vmatrix}.
\]
In both cases the Schur transform $\wh{{\mathfrak F}}$ of ${\mathfrak F}\in {\bf D}_{-\infty}^\pm$ is
defined by the equality
\begin{equation}\label{SchTr}
-\frac{1}{{\mathfrak    F}(\lambda)}=\epsilon_0{\mathfrak p}_0(\lambda)+    {\mathfrak
b}_0^2\wh{{\mathfrak F}}(\lambda),
\end{equation}
where $\epsilon_0={\mathfrak s}_{{\mathfrak n}_1-1}$ and ${\mathfrak b}_0$ is chosen in such a way that $\wh{{\mathfrak F}}$ is
normalized. Also, it is easy to see that $\wh{{\mathfrak F}}\in {\bf D}_{-\infty}^\mp$
(see~\cite{DD09} for details).
Further, considering the asymptotic expansion of ${{\mathfrak F}_1}:=\wh{{\mathfrak F}}$
\[
\wh{{\mathfrak F}}(\lambda)\sim -\frac{{\mathfrak s}_{0}^{(1)}}{\lambda}-\frac{{\mathfrak
s}_{1}^{(1)}}{\lambda^{2}}-\dots-\frac{{\mathfrak s}_{2n}^{(1)}}{\lambda^{2n+1}}
-\dots,\quad\lambda\widehat{\rightarrow }\infty,
\]
one can construct the number $\epsilon_1=\pm 1$, the monic polynomial ${\mathfrak p}_1$, and the function ${{\mathfrak F}_2}:=\wh{{\mathfrak F}_1}$.
Continuing this procedure leads us to~\eqref{PfractionG}.

\subsection{Generalized Jacobi matrices associated with the ${\bf D}_{-\infty}^\pm$-functions.}
The $P$-fraction~\eqref{PfractionG} enables us to associate a generalized Jacobi matrix with
the function ${\mathfrak F}\in {\bf D}_{-\infty}^\pm$ (see~\cite{De09,DD,DD07}).

\begin{defn}\label{defGJM}
Let ${\mathfrak F}$ be a nonrational normalized ${\bf D}_{-\infty}^\pm$-function having the $P$-fraction
expansion~\eqref{PfractionG}. Then
{\it a monic generalized Jacobi matrix}  associated with
the function ${\mathfrak F}\in {\bf D}_{-\infty}^\pm$
is the tridiagonal block matrix
\begin{equation}\label{mJacobi}
\mathfrak{J}=\begin{pmatrix}
\mathfrak{B}_{0}   &\mathfrak{D}_{0}&       &\\
\mathfrak{C}_{0}   &\mathfrak{B}_1    &\mathfrak{D}_{1}&\\
        &\mathfrak{C}_1    &\mathfrak{B}_{2} &\ddots\\
&       &\ddots &\ddots\\
\end{pmatrix},
\end{equation}
where the diagonal entries have the following form
\[
{\mathfrak B}_j=\left\{\begin{array}{cc}
             -{\mathfrak p}_0^{(j)}, & \mbox{if }{\mathfrak k}_j=1; \\
             \begin{pmatrix} 0 & 1\\
              -{\mathfrak p}_0^{(j)} & -{\mathfrak p}_1^{(j)}\end{pmatrix} & \mbox{if }{\mathfrak k}_j=2,
           \end{array}\right.
\]
the super-diagonal is built up with $\mathfrak{k}_{j}\times\mathfrak{k}_{j+1}$ matrices
\[
\mathfrak{D}_{j}=
\begin{pmatrix}
0&0\\
1&0\\
\end{pmatrix},\quad
\begin{pmatrix}
0\\
1\\
\end{pmatrix},\quad
\begin{pmatrix}
1&0\\
\end{pmatrix},\quad
(1),
\]
and the sub-diagonal consists of $\mathfrak{k}_{j+1}\times\mathfrak{k}_{j}$ matrices
\[
\mathfrak{C}_{j}=
\begin{pmatrix}
0&0\\
\mathfrak{c}_j&0\\
\end{pmatrix},\quad
\begin{pmatrix}
0\\
\mathfrak{c}_j\\
\end{pmatrix},\quad
\begin{pmatrix}
\mathfrak{c}_j&0\\
\end{pmatrix},\quad
(\mathfrak{c}_j),
\]
here $\mathfrak{c}_j=\epsilon_j\epsilon_{j+1} \mathfrak{b}_j^2$.
\end{defn}
\begin{rem}
Actually, one can associate a monic generalized Jacobi matrix
$\mathfrak{J}$ with an arbitrary nonrational function ${\mathfrak
F}\in {\bf D}_{-\infty}^\pm$. Namely, every function ${\mathfrak
F}\in {\bf D}_{-\infty}^\pm$ can be normalized as follows
${\mathfrak F}_{nor}={\mathfrak F}/|{\mathfrak s}_{{\mathfrak
n}_1-1}|$ and the monic generalized Jacobi matrix associated with
${\mathfrak F}_{nor}$ will be also called associated with
${\mathfrak F}$ (see Remark~2.5).
\end{rem}

Setting $\mathfrak{c}_{-1}=\epsilon_0$, let us define polynomials of
the first kind $\mathfrak{P}_j(\lambda)$, $j\in\dZ_+$, as solutions
$y_j=\mathfrak{P}_j(\lambda)$ of the following system:
\begin{equation}\label{eq10}
\mathfrak{c}_{j-1}y_{j-1}-\mathfrak{p}_j(\lambda)y_{j}+y_{j+1}=0,\,\,\,j\in\dZ_+,
\end{equation}
with the initial conditions
\begin{equation}\label{InConP}
y_{-1}=0,\quad y_0=1.
\end{equation}
Similarly, the polynomials of the second kind $\mathfrak{Q}_j(\lambda)$,
$j\in\dZ_+$, are defined as solutions $u_j=\mathfrak{Q}_j(\lambda)$ of the
system~\eqref{eq10} with the initial conditions
\begin{equation}\label{InConP2}
y_{-1}=-1,\quad y_0=0.
\end{equation}
It follows from~\eqref{eq10} that $\mathfrak{P}_{j}$ is a monic
polynomial of degree $\mathfrak{n}_j$ and $\mathfrak{Q}_j$ is a
polynomial of degree $\mathfrak{n}_j-\mathfrak{k}_0$ with the
leading coefficient $\epsilon_0$. The equations~\eqref{eq10}
coincide with the three-term recurrence relations associated with
$P$-fractions~\cite{Mag1} (see also~\cite[Section 5.2]{JTh}). The
following statement is immediate from~\eqref{eq10}.
\begin{prop}\label{Interlace} {\rm (\cite{DD07}).}
    Polynomials $\mathfrak{P}_{j}$ and $\mathfrak{P}_{j+1}$ ($\mathfrak{Q}_{j}$
and $\mathfrak{Q}_{j+1}$) are coprime.
\end{prop}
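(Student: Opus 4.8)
The plan is to treat \eqref{eq10} as an ordinary scalar three-term recurrence and to exploit that its off-diagonal coefficients never vanish: by Theorem~\ref{GJM} we have $\mathfrak{b}_j>0$, so $\mathfrak{c}_j=\epsilon_j\epsilon_{j+1}\mathfrak{b}_j^2\ne 0$ for $j\in\dZ_+$, and moreover $\mathfrak{c}_{-1}=\epsilon_0=\pm1\ne 0$. With this in hand, the cleanest route is the discrete Wronskian (Casoratian), which settles both claims at once. Rewrite \eqref{eq10} in transfer-matrix form,
\[
\begin{pmatrix}y_{j+1}\\ y_j\end{pmatrix}
=\begin{pmatrix}\mathfrak{p}_j(\lambda) & -\mathfrak{c}_{j-1}\\ 1 & 0\end{pmatrix}
\begin{pmatrix}y_j\\ y_{j-1}\end{pmatrix},
\]
so that the transfer matrix has determinant $\mathfrak{c}_{j-1}$. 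Applying this to the two fundamental solutions simultaneously, the Casoratian $W_j:=\mathfrak{P}_{j+1}\mathfrak{Q}_j-\mathfrak{P}_j\mathfrak{Q}_{j+1}$ obeys $W_j=\mathfrak{c}_{j-1}W_{j-1}$, and the initial data \eqref{InConP}, \eqref{InConP2} give $W_{-1}=\mathfrak{P}_0\mathfrak{Q}_{-1}-\mathfrak{P}_{-1}\mathfrak{Q}_0=-1$. Hence $W_j=-\prod_{i=-1}^{j-1}\mathfrak{c}_i$ is a nonzero \emph{constant}, independent of $\lambda$.

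Coprimality now follows immediately. A common zero $\lambda_0$ of $\mathfrak{P}_j$ and $\mathfrak{P}_{j+1}$ annihilates the pair $(\mathfrak{P}_{j+1},\mathfrak{P}_j)$ at $\lambda_0$, whence $W_j(\lambda_0)=\mathfrak{P}_{j+1}(\lambda_0)\mathfrak{Q}_j(\lambda_0)-\mathfrak{P}_j(\lambda_0)\mathfrak{Q}_{j+1}(\lambda_0)=0$, contradicting the fact that $W_j$ is a nonzero constant; likewise a common zero of $\mathfrak{Q}_j$ and $\mathfrak{Q}_{j+1}$ forces $W_j(\lambda_0)=0$, again impossible. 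I would note that this argument is robust even in degenerate low-index cases (e.g. $\mathfrak{Q}_0\equiv0$), since it never divides by a polynomial.

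Alternatively, one can argue by downward propagation of a common zero, which is perhaps closest to the "immediate from \eqref{eq10}" phrasing. If $\mathfrak{P}_j(\lambda_0)=\mathfrak{P}_{j+1}(\lambda_0)=0$, then \eqref{eq10} at index $j$ gives $\mathfrak{c}_{j-1}\mathfrak{P}_{j-1}(\lambda_0)=0$, hence $\mathfrak{P}_{j-1}(\lambda_0)=0$; iterating down to index $0$ forces $\mathfrak{P}_0(\lambda_0)=0$, contradicting $\mathfrak{P}_0\equiv1$. For the second kind the same chain passes through $\mathfrak{Q}_0\equiv0$ and terminates one step lower, where the instance $j=0$ together with $\mathfrak{c}_{-1}=\epsilon_0\ne0$ yields $\mathfrak{Q}_{-1}(\lambda_0)=0$, contradicting $\mathfrak{Q}_{-1}\equiv-1$; this is exactly where the nonvanishing of the leading coefficient $\mathfrak{c}_{-1}$ is used. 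I do not anticipate any genuine obstacle: the only things to keep honest are the determinant identity $\det T_j=\mathfrak{c}_{j-1}$ and the value $W_{-1}=-1$, plus the single structural observation that, although the $\mathfrak{p}_j$ may have degree up to $2$ and $\mathfrak{J}$ is a block matrix, \eqref{eq10} is nevertheless a bona fide scalar recurrence, so the classical coprimality argument for orthogonal polynomials of the first and second kind carries over verbatim.
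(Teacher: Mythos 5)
Your proposal is correct, and your second argument (propagating a common zero downward through the recurrence \eqref{eq10}, using $\mathfrak{c}_j\ne 0$ and $\mathfrak{c}_{-1}=\epsilon_0\ne 0$, until it collides with the initial conditions \eqref{InConP}, \eqref{InConP2}) is exactly the ``immediate from \eqref{eq10}'' argument the paper has in mind. The Casoratian computation is a standard equivalent packaging of the same fact, so there is nothing genuinely different here.
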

Recall (see~\cite{KL79}) that the polynomials ${\mathfrak P}_{j}$
can be found by the formulas
\begin{equation}\label{eq:Pfrak}
    {{\mathfrak P}}_{j}(\lambda)=\frac{1}{d_j} \det\begin{pmatrix}{\mathfrak s}_0 &
{\mathfrak s}_1
& \dots & {\mathfrak s}_{n_j}\\
\hdotsfor4 \\
{\mathfrak s}_{n_j-1} & {\mathfrak s}_{n_j} & \dots & {\mathfrak
s}_{2n_j-1}\\
1 & \lambda & \dots & \lambda^{n_j}\end{pmatrix}\quad
(d_j=\det({\mathfrak s}_{i+k})_{i,k=0}^{n_j-1}),
\end{equation}
and hence
\begin{equation}\label{eq:Pfrak2}
{{\mathfrak P}}_{j}(0)=\frac{(-1)^{\mathfrak{n}_j}}{d_j}\det({\mathfrak
s}_{i+k+1})_{i,k=0}^{n_j-1}, \, {{\mathfrak
Q}}_{j}(0)=\frac{(-1)^{\mathfrak{n}_j}}{d_j} \det\begin{pmatrix}
0&\mathfrak{s}_0&\dots &\mathfrak{s}_{\mathfrak{n}_j-1}\\
\mathfrak{s}_{0}&\mathfrak{s}_1&\dots &\mathfrak{s}_{\mathfrak{n}_j}\\
\hdotsfor{4}\\
\mathfrak{s}_{\mathfrak{n}_j-1}&\mathfrak{s}_{\mathfrak{n}_j+1}&\dots
&\mathfrak{s}_{2\mathfrak{n}_j-1}
\end{pmatrix},
\end{equation}
since ${{\mathfrak Q}}_{j}(\lambda)=
{\mathfrak C}_t\left(\frac{{{\mathfrak P}}_{j}(\lambda)-{{\mathfrak P}}_{j}(t)}{\lambda-t}\right)$,
 where ${\mathfrak C}_t$ is a linear functional such that
 \[
 {\mathfrak C}_t(t^j)={\mathfrak s}_j,\quad j\in\dZ_+.
 \]

Introducing for arbitrary $j\in\dZ_+$ the  shortened matrices
\begin{equation}\label{ShortMat}
\mathfrak{J}_{[0,j]}=
\begin{pmatrix}
\mathfrak{B}_{0}   &\mathfrak{D}_{0}&       &\\
\mathfrak{C}_{0}   & \mathfrak{B}_{1}   & \ddots&\\
        & \ddots   & \ddots &\mathfrak{D}_{j-1}\\
&       &\mathfrak{C}_{j-1} &\mathfrak{B}_j\\
\end{pmatrix},\,
\mathfrak{J}_{[1,j]}=
\begin{pmatrix}
\mathfrak{B}_{1}   &\mathfrak{D}_{1}&       &\\
\mathfrak{C}_{1}   & \mathfrak{B}_{2}   & \ddots&\\
        & \ddots   & \ddots &\mathfrak{D}_{j-1}\\
&       &\mathfrak{C}_{j-1} &\mathfrak{B}_j\\
\end{pmatrix},
\end{equation}
one can obtain the following connection between the polynomials of the
first and second kind $\mathfrak{P}_{j}$, $\mathfrak{Q}_{j}$ and the shortened Jacobi matrices
$\mathfrak{J}_{[0,j]}$ and $\mathfrak{J}_{[1,j]}$
(for the classical case see~\cite[Section 7.1.2]{Be}).
\begin{eqnarray}
\mathfrak{P}_{j}(\lambda)=\det(\lambda-\mathfrak{J}_{[0,j-1]}),\quad
\mathfrak{Q}_{j}(\lambda)=\epsilon_0\det(\lambda-\mathfrak{J}_{[1,j-1]}).\label{polynom2}
\end{eqnarray}

\begin{rem} Let us define an infinite matrix $G$ by the
equality
\begin{equation}\label{Gram}
G=\mbox{diag}(G_{0},\dots,G_{n},\dots),\quad
G_{j}=
\left\{\begin{array}{cc}
             \epsilon_j, & \mbox{if }{\mathfrak k}_j=1; \\
             \epsilon_j\begin{pmatrix} 0 & 1\\
              1 & -{\mathfrak p}_1^{(j)}\end{pmatrix} & \mbox{if }{\mathfrak k}_j=2,
           \end{array}\right.
\end{equation}
and let  $\ell^2_{[0,\infty)}(G)$ be the space of $\ell^2$-vectors
with the inner product
\begin{equation}\label{metric}
\left[x,y\right]=(Gx,y)_{\ell^2_{[0,\infty)}},\quad x,y\in\ell^2_{[0,\infty)}.
\end{equation}
The inner product~\eqref{metric} is indefinite, if either $k_j>1$
for some $j\in\dZ_+$, or at least one $\e_{j}$ is equal to $-1$. The
space $\ell^2_{[0,\infty)}(G)$ is equivalent to a Kre\u{\i}n space
(see~\cite{AI}) if both $G$ and $G^{-1}$ are bounded in
$\ell^2_{[0,\infty)}$.  If $k_j=\e_j=1$ for all $j$ big enough, then
$\ell^2_{[0,\infty)}(G)$ is a Pontryagin space.
\end{rem}

The $m$-function of the shortened matrix $\mathfrak{J}_{[0,j-1]}$ is defined by
\begin{equation}\label{SWfun}
    \mathfrak{m}_{[0,j-1]}(\lambda)=[(\mathfrak{J}^{\top}_{[0,j-1]}-\lambda)^{-1}e_0,e_0],
\end{equation}
where $e_0=(1,0,0,\dots,0)^{\top}\in\dC^{\mathfrak{n}_j}$. Due
to~\eqref{polynom2} it is calculated by
\begin{equation}~\label{mQP}
\mathfrak{m}_{[0,j-1]}(\lambda)=
-\varepsilon_0\frac{\det(\lambda-\mathfrak{J}_{[1,j-1]})}
{\det(\lambda-\mathfrak{J}_{[0,j-1]})}=-\frac{\mathfrak{Q}_{j}(\lambda)}{\mathfrak{P}_{j}(\lambda)}.
\end{equation}
\begin{rem}\label{ResSet}
    Let us emphasize that the polynomials $\mathfrak{P}_{j}$ and $\mathfrak{Q}_{j}$ have no common
    zeros (see~\cite[Proposition 2.7]{DD}) and due to~\eqref{mQP} the set of
    holomorphy of $\mathfrak{m}_{[0,j-1]}$ coincides with the resolvent set of $\mathfrak{J}_{[0,j-1]}$.
\end{rem}

It was shown in~\cite{DD} that  $\mathfrak{m}_{[0,j-1]}$ has the following
property
\begin{equation}\label{Qasympt+1}
\mathfrak{F}(\lambda)-\mathfrak{m}_{[0,j-1]}(\lambda)
=O\left(\frac{1}{\lambda^{2\mathfrak{n}_j+\mathfrak{k}_j}}\right),
\quad\lambda\widehat{\rightarrow }\infty.
\end{equation}
The latter means that
one can reconstruct the sequence $\{\mathfrak{s}_k\}_{k=0}^{\infty}$
by the monic generalized Jacobi matrix $\mathfrak{J}$.
Namely, it follows from~\eqref{Qasympt+1} that for any $k\in\dZ_+$
\begin{equation}\label{momrec}
\mathfrak{s}_k=\left[\Big({\mathfrak
J}_{[0,j-1]}^{\top}\Big)^ke_0,e_0\right],\quad \mbox{ for all
}j\mbox{ such that } \mathfrak{n}_j\ge\frac{k}{2}.
\end{equation}

\section{The Christoffel transformation and its inverse}
From now on we always assume that $F\in{\mathbf N}_{-\infty}$ (or
$\mathfrak{F}\in{\mathbf D}^{\pm}_{-\infty}$) is a nonrational
function. Then there is a monic Jacobi (or generalized Jacobi) matrix
associated with $F$ (or $\mathfrak{F}$, respectively).
\subsection{$LU$-factorizations of Jacobi matrices}
In the following proposition we introduce a block $LU$-factorization of an arbitrary
Jacobi matrix.
\begin{prop}\label{prop31}
Let $J$ be a monic Jacobi matrix associated with $F\in{\mathbf
N}_{-\infty}$ with the asymptotic expansion~\eqref{asymp}, let
$\{{\mathfrak n}_j\}_{j=1}^\infty$ be the set of normal indices of
the sequence ${\mathfrak s}=\{{\mathfrak
s}_{j}\}_{j=0}^\infty:=\{s_{j+1}\}_{j=0}^\infty$ and let ${\mathfrak
k}_j:={\mathfrak n}_{j+1}-{\mathfrak n}_{j}$, $j\in\dZ_+$, where
$\mathfrak{n}_0=0$.
Then $J$
admits the following factorization
\begin{equation}\label{LU}
J=LU,
\end{equation}
where $L$ and $U$ are block lower and upper triangular matrices having the forms
\begin{equation}\label{eq:LU}
    L=\begin{pmatrix}
I_{\mathfrak{k}_{0}} & 0&       &\\
{L}_{1}   & I_{\mathfrak{k}_{1}}   & 0 &\\
        &L_2    & I_{\mathfrak{k}_{2}} &\ddots\\
&       &\ddots &\ddots\\
\end{pmatrix},\quad
U=\begin{pmatrix}
{U}_{0}   &{D}_{0}&       &\\
{0}   &U_{1}    &{D}_{1}&\\
        &0    &{U}_{2} &\ddots\\
&       &\ddots &\ddots\\
\end{pmatrix}
\end{equation}
 in which the sub-diagonal of $L$ consists of
$\mathfrak{k}_{j+1}\times\mathfrak{k}_{j}$ matrices
\begin{equation}\label{eq:L}
{L}_{j+1}=
\begin{pmatrix}
l_{j+1}&0\\
0&0\\
\end{pmatrix},\quad
\begin{pmatrix}
l_{j+1}\\
0\\
\end{pmatrix},\quad
\begin{pmatrix}
l_{j+1}&0\\
\end{pmatrix},\quad
(l_{j+1}),
\end{equation}
the diagonal entries of $U$ are of the form
\begin{equation}\label{eq:U}
{U}_j=\left\{\begin{array}{cc}
             {u}_0^{(j)}, & \mbox{if }{\mathfrak k}_j=1; \\
             \begin{pmatrix} 0 & 1\\
             {u}_0^{(j)}& {u}_1^{(j)}\end{pmatrix} & \mbox{if }{\mathfrak k}_j=2,
           \end{array}\right.
\end{equation}
and the super-diagonal of $U$ is built up with
$\mathfrak{k}_{j}\times\mathfrak{k}_{j+1}$ matrices
\begin{equation}\label{eq:DJ}
    {D}_{j}=
\begin{pmatrix}
0&0\\
1&0\\
\end{pmatrix},\quad
\begin{pmatrix}
0\\
1\\
\end{pmatrix},\quad
\begin{pmatrix}
1&0\\
\end{pmatrix},\quad
(1).
\end{equation}
 Moreover, the factorization~\eqref{LU} is unique and the following
relations hold true
\begin{equation}\label{f_LU_def}
u_0^{(j)}=-\frac{P_{\mathfrak{n}_{j+1}}(0)}{P_{\mathfrak{n}_j}(0)},\quad
u_1^{(j)}=b_{\mathfrak{n}_j+1},\quad j\in\dZ_+.
\end{equation}
\end{prop}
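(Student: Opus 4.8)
The plan is to posit $L$ and $U$ in the block bidiagonal forms \eqref{eq:LU}--\eqref{eq:DJ} with the scalars $l_{j+1}$, $u_0^{(j)}$, $u_1^{(j)}$ as unknowns, and to reduce the single identity $J=LU$ to a system of block equations indexed by $j$. Writing $\mathfrak{B}_j$, $\mathfrak{C}_j$, $\mathfrak{D}_j$ for the diagonal, sub- and super-diagonal blocks of $J$ in the partition determined by $\{\mathfrak{k}_j\}$, a block multiplication gives $\mathfrak{D}_j=D_j$, $\mathfrak{C}_j=L_{j+1}U_j$ and $\mathfrak{B}_j=U_j+L_jD_{j-1}$ (with $L_0D_{-1}=0$). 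Since $J$ is tridiagonal and monic, each off-diagonal block of $J$ has a single nonzero entry in the appropriate corner, so the identity $\mathfrak{D}_j=D_j$ holds automatically and fixes the super-diagonal of $U$. Before solving the remaining equations I would record the dictionary between normality and the values $P_n(0)$: by the classical Hankel-determinant formula $P_n(0)=(-1)^n\det(s_{i+k+1})_{i,k=0}^{n-1}/\det(s_{i+k})_{i,k=0}^{n-1}$ together with \eqref{NormIndFrak} applied to $\mathfrak{s}_i=s_{i+1}$, an index $n$ is normal precisely when $P_n(0)\neq0$; since consecutive monic orthogonal polynomials are coprime they cannot both vanish at $0$, whence $\mathfrak{k}_j\le2$ and the block sizes are indeed $1$ or $2$.

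Next I would solve the block system recursively. The base case $\mathfrak{B}_0=U_0$ reads off $U_0$ from the leading block of $J$; thereafter the sub-diagonal equation $\mathfrak{C}_j=L_{j+1}U_j$ determines $l_{j+1}$ and the diagonal equation $\mathfrak{B}_j=U_j+L_jD_{j-1}$ determines $U_j$. Examining the four size combinations, $L_{j+1}U_j$ always has a single nonzero entry expressible through $l_{j+1}$; when $\mathfrak{k}_j=1$ this entry is $l_{j+1}u_0^{(j)}$, so solving for $l_{j+1}$ requires $u_0^{(j)}\neq0$, which holds because $\mathfrak{n}_j$ and $\mathfrak{n}_{j+1}$ are normal. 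In the remaining cases $l_{j+1}$ is read off directly. This yields both existence and uniqueness of the scalars, provided the diagonal equation is consistent with the prescribed shape of $U_j$.

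The main obstacle is exactly this consistency: for $\mathfrak{k}_j=2$ the ansatz forces the top row of $U_j$ to equal $(0,\,1)$, and while the $(1,2)$-, $(2,1)$- and $(2,2)$-entries match the corresponding entries of $\mathfrak{B}_j$ without difficulty, the $(1,1)$-entry equation reads $b_{\mathfrak{n}_j}=(L_jD_{j-1})_{11}$, i.e. $l_j=b_{\mathfrak{n}_j}$ when $\mathfrak{k}_{j-1}=1$ and $b_{\mathfrak{n}_j}=0$ when $\mathfrak{k}_{j-1}=2$. I would verify both identities from the three-term recurrence \eqref{recrelmonp} evaluated at $\lambda=0$, namely $0=P_{n+1}(0)+b_nP_n(0)+c_{n-1}P_{n-1}(0)$, using that $\mathfrak{k}_j=2$ forces $P_{\mathfrak{n}_j+1}(0)=0$. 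Indeed, with $P_{\mathfrak{n}_j+1}(0)=0$ the relation at $n=\mathfrak{n}_j$ gives $b_{\mathfrak{n}_j}P_{\mathfrak{n}_j}(0)=-c_{\mathfrak{n}_j-1}P_{\mathfrak{n}_j-1}(0)$; when $\mathfrak{k}_{j-1}=2$ the right-hand side vanishes because $P_{\mathfrak{n}_j-1}(0)=P_{\mathfrak{n}_{j-1}+1}(0)=0$, forcing $b_{\mathfrak{n}_j}=0$, and when $\mathfrak{k}_{j-1}=1$ the same relation together with the already computed value of $l_j$ yields $l_j=-c_{\mathfrak{n}_j-1}P_{\mathfrak{n}_j-1}(0)/P_{\mathfrak{n}_j}(0)=b_{\mathfrak{n}_j}$. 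This closes the recursion and confirms that $U_j$ has the form \eqref{eq:U}.

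Finally I would extract the explicit formulas \eqref{f_LU_def}. The value $u_1^{(j)}=b_{\mathfrak{n}_j+1}$ falls out of the $(2,2)$-entry of the diagonal equation, since $L_jD_{j-1}$ has no $(2,2)$-contribution. For $u_0^{(j)}$ I would pass to leading principal minors: the factorization gives $\det J_{[0,\mathfrak{n}_{j+1}-1]}=\prod_{i=0}^j\det U_i$, while $P_{\mathfrak{n}_{j+1}}(0)=(-1)^{\mathfrak{n}_{j+1}}\det J_{[0,\mathfrak{n}_{j+1}-1]}$; comparing the products for consecutive $j$ gives $\det U_j=(-1)^{\mathfrak{k}_j}P_{\mathfrak{n}_{j+1}}(0)/P_{\mathfrak{n}_j}(0)$, and since $\det U_j=u_0^{(j)}$ for $\mathfrak{k}_j=1$ and $\det U_j=-u_0^{(j)}$ for $\mathfrak{k}_j=2$, both cases collapse to $u_0^{(j)}=-P_{\mathfrak{n}_{j+1}}(0)/P_{\mathfrak{n}_j}(0)$, as claimed. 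The same ratio can alternatively be obtained directly from the recurrence at $\lambda=0$.
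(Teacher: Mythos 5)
Your proof is correct and follows essentially the same route as the paper's: block multiplication of the ansatz, a case analysis on $(\mathfrak{k}_{j-1},\mathfrak{k}_j)$ whose only nontrivial consistency condition (the $(1,1)$-entry of the diagonal block when $\mathfrak{k}_j=2$) is settled by the three-term recurrence at $\lambda=0$, together with the identification of normal indices of $\{s_{j+1}\}$ with $P_n(0)\ne 0$. The one genuine deviation is that you extract $u_0^{(j)}=-P_{\mathfrak{n}_{j+1}}(0)/P_{\mathfrak{n}_j}(0)$ in a single stroke from $\det J_{[0,\mathfrak{n}_{j+1}-1]}=\prod_{i\le j}\det U_i$ and $P_n(0)=(-1)^n\det J_{[0,n-1]}$, which replaces the paper's case-by-case recurrence computation of this ratio and is arguably cleaner.
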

\begin{proof}
It is natural to consider the following relation
\[
LU=
\begin{pmatrix}
U_0 & D_0&       &\\
{L}_{1}U_0   & L_1D_0+U_1   & D_1 &\\
        &L_2U_1    & L_2D_0+U_2 &\ddots\\
&       &\ddots &\ddots\\
\end{pmatrix}=
\begin{pmatrix}
b_0 &1&       &\\
c_{0}&b_{1}    &{1}&\\
        &c_1&{b}_{2} &\ddots\\
&       &\ddots &\ddots\\
\end{pmatrix},
\]
which enables us to find the entries of $L$ and $U$. By the definition, there are two possibilities for
the matrix $U_0$. If $\mathfrak{k}_0=1$ then
\[
u_0^{(0)}=b_0=-\frac{P_1(0)}{P_0(0)}=-\frac{P_{\mathfrak{n}_1}(0)}{P_{\mathfrak{n}_0}(0)}.
\]
In case $\mathfrak{k}_0=2$ we have
\[
\begin{pmatrix}
0&1\\
u_0^{(0)}&u_1^{(0)}\\
\end{pmatrix}=
\begin{pmatrix}
b_0&1\\
c_0&b_1\\
\end{pmatrix},
\]
and, hence, one obtains the equalities
\[
u_0^{(0)}=c_0,\quad u_1^{(0)}=b_1.
\]
Since $\mathfrak{k}_0=2$ then $s_1=0$, $P_1(0)=0$ and~\eqref{eq:1.1}
yields
\[
u_0^{(0)}=c_0=-\frac{P_2(0)}{P_0(0)}=-\frac{P_{\mathfrak{n}_1}(0)}{P_{\mathfrak{n}_0}(0)}.
\]
The equality $b_0=0$ follows from the relation $b_0=\frac{s_1}{s_0}$
(see~\cite{Ach61}).

Now, assume that
\begin{equation}\label{indstep_LU}
u_0^{(j-1)}=-\frac{P_{\mathfrak{n}_{j}}(0)}{P_{\mathfrak{n}_{j-1}}(0)}.
\end{equation}
We will consider  4 cases.

{\it Case 1.} $\mathfrak{k}_j=\mathfrak{k}_{j-1}=2$, $j\in\dN$. In
this case one obtains the following relations
\[
L_jU_{j-1}=\begin{pmatrix}
0&l_{j}\\
0&0\\
\end{pmatrix}=
\begin{pmatrix}
0&c_{\mathfrak{n}_j-1}\\
0&0\\
\end{pmatrix},
\]
\[
L_jD_{j-1}+U_j=
\begin{pmatrix}
0&1\\
u_0^{(j)}&u_1^{(j)}\\
\end{pmatrix}=
\begin{pmatrix}
b_{\mathfrak{n}_j}&1\\
c_{\mathfrak{n}_j}&b_{\mathfrak{n}_j+1}\\
\end{pmatrix},
\]
which immediately imply
\begin{equation}\label{fTh52_1}
l_{j}=c_{\mathfrak{n}_j-1},\quad
u_1^{(j)}=b_{\mathfrak{n}_j+1},\quad u_0^{(j)}=c_{\mathfrak{n}_j}.
\end{equation}
It follows from~$\mathfrak{k}_j=\mathfrak{k}_{j-1}=2$ that
$\det(\mathfrak{s}_{i+l})_{i,l=0}^{\mathfrak{n}_j\pm 1}=0$. Hence
by~\eqref{eq:Pfrak2}
$P_{\mathfrak{n}_j+1}(0)=P_{\mathfrak{n}_j-1}(0)=0$ and
then~\eqref{recrelmonp} yields $b_{n_j}=0$. Moreover,
from~\eqref{recrelmonp} one gets also
\begin{equation}\label{eq:u0j}
u_0^{(j)}=c_{\mathfrak{n}_j}=
-\frac{P_{\mathfrak{n}_{j}+2}(\lambda)+(b_{\mathfrak{n}_{j}+1}-\lambda)P_{\mathfrak{n}_{j}+1}(\lambda)}
{P_{\mathfrak{n}_j}(\lambda)}\Big|_{\lambda=0}=
-\frac{P_{\mathfrak{n}_{j+1}}(0)} {P_{\mathfrak{n}_j}(0)}.
\end{equation}

 {\it Case 2.} $\mathfrak{k}_j=2$, $\mathfrak{k}_{j-1}=1$,
$j\in\dN$. In this case  we have
\[
L_jU_{j-1}=\begin{pmatrix}
l_{j}u_{0}^{(j)}\\
0\\
\end{pmatrix}=
\begin{pmatrix}
c_{\mathfrak{n}_j-1}\\
0\\
\end{pmatrix},
\]
\[
L_jD_{j-1}+U_j=
\begin{pmatrix}
l_j&1\\
u_0^{(j)}&u_1^{(j)}\\
\end{pmatrix}=
\begin{pmatrix}
b_{\mathfrak{n}_j}&1\\
c_{\mathfrak{n}_j}&b_{\mathfrak{n}_j+1}\\
\end{pmatrix}.
\]
Since $P_{\mathfrak{n}_j+1}(0)=0$, one concludes as
in~\eqref{eq:u0j} that
\begin{equation}\label{fTh52_2}
u_1^{(j)}=b_{\mathfrak{n}_j+1},\quad
u_0^{(j)}=c_{\mathfrak{n}_j}=-\frac{P_{\mathfrak{n}_{j+1}}(0)}
{P_{\mathfrak{n}_j}(0)}.
\end{equation}
In order to see that  the relations
\begin{equation}\label{eq_c21}
l_j=b_{\mathfrak{n}_j},\quad l_ju_{0}^{(j-1)}=c_{\mathfrak{n}_j-1}
\end{equation}
hold true simultaneously, let us set $l_j:=b_{\mathfrak{n}_j}$. Thus, according
to the assumption~\eqref{indstep_LU}, the second relation in~\eqref{eq_c21}
can be rewritten as follows
\begin{equation}\label{eq_c21_f}
b_{\mathfrak{n}_j}P_{\mathfrak{n}_j}(0)+c_{\mathfrak{n}_j-1}P_{\mathfrak{n}_j-1}(0)=0,
\end{equation}
and formula~\eqref{eq_c21_f} follows from~\eqref{recrelmonp} and
the fact that $P_{\mathfrak{n}_j+1}(0)=0$.

 {\it Case 3.} $\mathfrak{k}_j=1$, $\mathfrak{k}_{j-1}=2$, $j\in\dN$. The identities
$\mathfrak{k}_j=1$ and $\mathfrak{k}_{j-1}=2$ yield
\[
L_jU_{j-1}=\begin{pmatrix}
0&l_{j}\\
\end{pmatrix}=
\begin{pmatrix}
0&c_{\mathfrak{n}_j-1}\\
\end{pmatrix},
\quad L_jD_{j-1}+U_j=u_0^{(j)}=b_{\mathfrak{n}_j}.
\]
Consequently, by taking into account the equalities $P_{\mathfrak{n}_j-1}(0)=0$ and
$\mathfrak{n}_{j+1}=\mathfrak{n}_j+1$ we get that
\begin{equation}\label{fTh52_3}
l_{j}=c_{\mathfrak{n}_j-1},\quad
u_0^{(j)}=b_{\mathfrak{n}_j}=-\frac{P_{\mathfrak{n}_{j+1}}(0)}
{P_{\mathfrak{n}_j}(0)}.
\end{equation}

 {\it Case 4.} $\mathfrak{k}_j=1$, $\mathfrak{k}_{j-1}=1$, $j\in\dN$. This case reduces
to the following relations
\begin{equation}\label{eq_c11}
L_jU_{j-1}=l_{j}u_{0}^{(j)}= c_{\mathfrak{n}_j-1}, \quad
L_jD_{j-1}+U_j= l_j+u_0^{(j)}= b_{\mathfrak{n}_j}.
\end{equation}
Eliminating $l_j$ from the first equation in~\eqref{eq_c11} and substituting it to
the second one, we get
\begin{equation}\label{fTh52_4}
u_0^{(j)}=
\frac{b_{\mathfrak{n}_{j}}P_{\mathfrak{n}_{j}}(0)+c_{\mathfrak{n}_{j}-1}P_{\mathfrak{n}_{j}-1}(0)}
{P_{\mathfrak{n}_j}(0)}= -\frac{P_{\mathfrak{n}_{j+1}}(0)}
{P_{\mathfrak{n}_j}(0)}.
\end{equation}
\end{proof}
\begin{thm}\label{Chris}
Let $J$ be a monic Jacobi matrix associated with $F\in{\mathbf N}_{-\infty}$ and let
$J=LU$ be its $LU$ factorization. Then the matrix $\mathfrak{J}=UL$ is the monic generalized
Jacobi matrix associated with $\lambda F(\lambda)+s_0\in{\bf D}_{-\infty}^-$.
\end{thm}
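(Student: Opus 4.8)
The plan is to establish the three assertions hidden in the statement in turn: that $\mathfrak{F}_C(\lambda):=\lambda F(\lambda)+s_0$ lies in $\mathbf{D}_{-\infty}^-$, that $\mathfrak{J}=UL$ has the shape \eqref{mJacobi} of a monic generalized Jacobi matrix, and that this matrix is the one attached to $\mathfrak{F}_C$. For the first, I multiply the expansion $F(\lambda)\sim-\sum_{j\ge0}s_j\lambda^{-(j+1)}$ by $\lambda$ and add $s_0$ to obtain $\mathfrak{F}_C(\lambda)\sim-\sum_{j\ge0}s_{j+1}\lambda^{-(j+1)}$; thus $\mathfrak{F}_C$ carries exactly the shifted sequence $\mathfrak{s}=\{s_{j+1}\}_{j\ge0}$ from Proposition~\ref{prop31}, so its normal indices are the $\mathfrak{n}_j$ and its block sizes the $\mathfrak{k}_j$ fixed there. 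Taking $\mathfrak{s}_{-1}=s_0$ (nonzero since $s_0=\int d\sigma>0$) in Definition~\ref{deffunct-}, the combination $-s_0/\lambda+\mathfrak{F}_C(\lambda)/\lambda$ equals $F(\lambda)\in\mathbf{N}_{-\infty}$, which is precisely the required membership condition.

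For the shape of $\mathfrak{J}=UL$ I would simply carry out the block multiplication. It produces a tridiagonal block matrix with super-diagonal $\mathfrak{D}_j=D_j$, diagonal $\mathfrak{B}_j=U_j+D_jL_{j+1}$ and sub-diagonal $\mathfrak{C}_j=U_{j+1}L_{j+1}$. Checking the four cases $\mathfrak{k}_j,\mathfrak{k}_{j+1}\in\{1,2\}$ with the explicit blocks \eqref{eq:L}--\eqref{eq:DJ}, one sees that $\mathfrak{D}_j$ is always an admissible super-diagonal block, that $\mathfrak{B}_j$ equals $u_0^{(j)}+l_{j+1}$ when $\mathfrak{k}_j=1$ and $\left(\begin{smallmatrix}0&1\\ u_0^{(j)}+l_{j+1}&u_1^{(j)}\end{smallmatrix}\right)$ when $\mathfrak{k}_j=2$, and that $\mathfrak{C}_j$ has single nonzero entry $\mathfrak{c}_j:=u_0^{(j+1)}l_{j+1}$. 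Hence $UL$ has the form \eqref{mJacobi}, with $-\mathfrak{p}_0^{(j)}=u_0^{(j)}+l_{j+1}$ and $-\mathfrak{p}_1^{(j)}=u_1^{(j)}$.

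The substantive step is to show that this matrix is the generalized Jacobi matrix of $\mathfrak{F}_C$, and I would do so by comparing first-kind polynomials. Because $L$ is block lower and $U$ block upper triangular, truncation at block $j-1$ gives $J_{[0,\mathfrak{n}_j-1]}=L_{[0,j-1]}U_{[0,j-1]}$, so $P_{\mathfrak{n}_j}(\lambda)=\det(\lambda-L_{[0,j-1]}U_{[0,j-1]})=\det(\lambda-U_{[0,j-1]}L_{[0,j-1]})$, while the truncation of $UL$ differs from $U_{[0,j-1]}L_{[0,j-1]}$ only by the rank-one block $D_{j-1}L_j$ in the last diagonal block. Expanding $\det(\lambda-\mathfrak{J}_{[0,j-1]})$ along this rank-one correction (matrix-determinant lemma) and reusing the truncation identity one block lower gives a recursion linking $\widetilde{\mathfrak{P}}_j:=\det(\lambda-\mathfrak{J}_{[0,j-1]})$, $\widetilde{\mathfrak{P}}_{j-1}$ and $P_{\mathfrak{n}_j}$; feeding in \eqref{recrelmonp} and the relations \eqref{f_LU_def} of Proposition~\ref{prop31}, an induction identifies $\widetilde{\mathfrak{P}}_j$ with the monic orthogonal polynomial $\mathfrak{P}_j$ of $\mathfrak{F}_C$ from \eqref{eq:Pfrak} (the Christoffel/kernel polynomial of $t\,d\sigma$). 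By \eqref{polynom2} these are exactly the first-kind polynomials of the generalized Jacobi matrix of $\mathfrak{F}_C$, and since such a matrix is reconstructed from its first-kind polynomials through the recurrence \eqref{eq10}, we conclude that $\mathfrak{J}=UL$ coincides with it. One could instead finish through \eqref{Qasympt+1}--\eqref{momrec}, checking that the finite-section $m$-functions $-\widetilde{\mathfrak{Q}}_j/\widetilde{\mathfrak{P}}_j$ are the Pad\'e approximants of $\mathfrak{F}_C$.

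The main obstacle I anticipate is the induction of the previous paragraph in the degenerate case $\mathfrak{k}_j=2$: the cofactor expansion and the resulting recursion must be organized so that the double jumps $\mathfrak{n}_{j+1}-\mathfrak{n}_j=2$ are handled on the same footing as the simple steps, and one has to control the signs $\epsilon_j$ and the normalization implicit in the factorization $\mathfrak{c}_j=\epsilon_j\epsilon_{j+1}\mathfrak{b}_j^2$.
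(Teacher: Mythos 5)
Your first two steps are sound and essentially match the paper: the verification that $\lambda F(\lambda)+s_0\in{\bf D}_{-\infty}^-$ (via ${\mathfrak s}_{-1}=s_0$ in Definition~\ref{deffunct-}) and the block multiplication showing $UL$ has the shape~\eqref{mJacobi}, with $-{\mathfrak p}_0^{(j)}=u_0^{(j)}+l_{j+1}$, $-{\mathfrak p}_1^{(j)}=u_1^{(j)}$ and ${\mathfrak c}_j=u_0^{(j+1)}l_{j+1}$, are both correct. Where you diverge from the paper is in the substantive third step, and there your argument is a plan rather than a proof. The identification of $\det(\lambda-{\mathfrak J}_{[0,j-1]})$ with the first-kind polynomials~\eqref{eq:Pfrak} of the shifted moment sequence is exactly the hard content of the theorem, and you leave it at the level of ``an induction identifies $\widetilde{\mathfrak P}_j$ with ${\mathfrak P}_j$,'' while explicitly conceding that you do not know how to organize the rank-one determinant expansion across the degenerate blocks ${\mathfrak k}_j=2$. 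That case is not a technicality: it is precisely the situation ($P_{{\mathfrak n}_j+1}(0)=0$) in which the classical kernel-polynomial identity $P_n^*(\lambda)=\lambda^{-1}\bigl(P_{n+1}(\lambda)-\tfrac{P_{n+1}(0)}{P_n(0)}P_n(\lambda)\bigr)$ breaks down and the whole block machinery of the paper is needed. As written, the proposal therefore has a genuine gap at its crux.

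For comparison, the paper avoids polynomial identities altogether. It normalizes $\lambda F(\lambda)+s_0$ by the factor $\epsilon_0u_0^{(0)}$ computed from~\eqref{f_LU_def}, and then uses the two identities~\eqref{inCT}, $L_{[0,j]}^{\top}e_0=e_0$ and $G_{[0,j]}e_0=\tfrac{\epsilon_0}{u_0^{(0)}}U_{[0,j]}e_0$, which hold uniformly in both cases ${\mathfrak k}_0=1,2$. Writing $s_n=(e_0,(L_{[0,j]}U_{[0,j]})^ne_0)$ and cyclically regrouping as $L(UL)^{n-1}U$, these identities convert the Euclidean moments of $J$ directly into the indefinite moments $[({\mathfrak J}_{[0,j]}^{\top})^{n-1}e_0,e_0]=s_n/(\epsilon_0u_0^{(0)})$ of $UL$; the conclusion then follows from the moment-reconstruction formula~\eqref{momrec}. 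This moment-matching device is what lets the paper treat the degenerate and nondegenerate blocks on the same footing with no case analysis, which is exactly the difficulty your route leaves unresolved. If you want to complete your version, you would need to carry out the determinant induction in all four adjacency cases of $({\mathfrak k}_{j-1},{\mathfrak k}_j)$, or else switch to the moment argument.
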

\begin{proof}
The fact that $\mathfrak{J}=UL$ is a monic generalized
Jacobi matrix can be easily verified by straightforward calculations.

We can assume that the function $F$ is normalized and let $F$ have
the asymptotic expansion~\eqref{eq:asymp}. Then the function
$\lambda F(\lambda)+1$ has the asymptotic expansion
\[
\lambda
F(\lambda)+1\sim-\frac{s_1}{\lambda}-\frac{s_2}{\lambda^2}-\dots\quad(\lambda\wh\to\infty)
\]
and is not necessarily normalized. Since according
to~\eqref{f_LU_def} we have
\[
\epsilon_0u_{0}^{(0)}=-\epsilon_0\frac{P_{\mathfrak{n}_{1}}(0)}{P_{\mathfrak{n}_0}(0)}=
\left\{\begin{array}{cc}
           \epsilon_0  s_1, & \mbox{if }{\mathfrak k}_0=1; \\
            \epsilon_0 s_2, & \mbox{if }{\mathfrak k}_0=2
           \end{array}\right.=
  \left\{\begin{array}{cc}
             |s_1|, & \mbox{if }{\mathfrak k}_0=1; \\
             |s_2|, & \mbox{if }{\mathfrak k}_0=2
           \end{array}\right.
\]
the function ${\mathfrak
F}_{nor}=(F(\lambda)+1)/{\epsilon_0u_{0}^{(0)}}$ is normalized.

 Further, observe that
\begin{equation}\label{inCT}
L_{[0,j]}^{\top}e_0=e_0,\quad
G_{[0,j]}e_0=\frac{\epsilon_0}{u_0^{(0)}}U_{[0,j]}e_0,\quad
j\in\dZ_+,
\end{equation}
where the shortened matrices $L_{[0,j]}$, $U_{[0,j]}$, and $G_{[0,j]}$ are defined analogously to~\eqref{ShortMat}.

Clearly, for $j$ big enough
\[
s_n=\left(e_0,J_{[0,{\mathfrak n}_{j+1}-1]}^ne_0\right), \quad n\in\dZ_+,
\]
where $J_{[0,{\mathfrak n}_{j+1}-1]}$ is the leading principal submatrix of
$J$ constructed from the first ${\mathfrak n}_{j+1}$ rows and
columns of $J$. Thus, we have
\[
\begin{split}
s_n=\left(e_0,J_{[0,{\mathfrak n}_{j+1}-1]}^ne_0\right)=
\Big(e_0,\underbrace{L_{[0,j]}U_{[0,j]}\dots L_{[0,j]}U_{[0,j]}}_{n\quad\text{times}} e_0\Big)=\\
=\Big(L^{\top}e_0,\underbrace{U_{[0,j]}L_{[0,j]}\dots U_{[0,j]}L_{[0,j]}}_{n-1\quad\text{times}} U_{[0,j]}e_0\Big),
\quad n\in\dN.
\end{split}
\]
Further, using~\eqref{inCT} gives the equality
\[
s_n=\epsilon_0u_{0}^{(0)}\left(e_0,(U_{[0,j]}L_{[0,j]})^{n-1}Ge_0\right).
\]
Hence for sufficiently large $j$ one calculates the moments
\begin{equation}\label{norm_def_sec}
\left[\left({\mathfrak{J}^{\top}_{[0,j]}}\right)^{n-1}e_0,e_0\right]=
\frac{s_n}{\epsilon_0u_{0}^{(0)}},
\end{equation}
which coincide with the coefficients in the asymptotic expansion of
${\mathfrak F}_{nor}$. In view of~\eqref{momrec} this means that the
generalized Jacobi matrix ${\mathfrak{J}}$ is associated with the
function $\lambda F(\lambda)+s_0\in{\bf D}_{-\infty}^-$.
\end{proof}

The transform $J=LU\mapsto \mathfrak{J}=UL$ is called the {\it
Christoffel transform} of the Jacobi matrix $J$.

\begin{cor}
Let $J$ be a monic Jacobi matrix associated with $F\in{\mathbf
N}_{-\infty}$ with the asymptotic expansion~\eqref{asymp} 
Assume that
$P_j(0)\ne 0$ for all $j\in\dN$. Then ${\mathfrak n}_j=j$ for
$j\in\dN$ and
\begin{enumerate}
    \item [(i)]  The matrix $J$
admits the $LU$-factorization~\eqref{LU}, where
$L$ and $U$ are lower and upper triangular matrices
having the forms \eqref{eq:LU} with
\begin{equation}\label{f_LU_BM}
D_j=1,\quad U_j=-\frac{P_{j+1}(0)}{P_{j}(0)},\quad
L_{j+1}=b_{{j+1}}-U_{j+1},\quad j\in\dZ_+.
\end{equation}
\item[(ii)]
The matrix $\mathfrak{J}=UL$ is the monic tridiagonal generalized
Jacobi matrix associated with $\lambda F(\lambda)+s_0\in{\bf
D}_{-\infty}^-$.
\end{enumerate}
\end{cor}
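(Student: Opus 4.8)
The plan is to derive the corollary entirely from the already-established Proposition~\ref{prop31} and Theorem~\ref{Chris} by showing that the hypothesis $P_j(0)\neq 0$ forces every step length $\mathfrak{k}_j$ to equal $1$, so that the block structure degenerates to the classical scalar bidiagonal situation and the block formulas collapse to~\eqref{f_LU_BM}.

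First I would translate the condition $P_j(0)\neq 0$ into a statement about normal indices. Using the classical determinantal representation of the monic orthogonal polynomials $P_n$ (the analogue of~\eqref{eq:Pfrak}) and evaluating at $\lambda=0$, the expansion along the last row leaves only the leading entry $1$ in the first column, so that
\[
P_n(0)=\frac{(-1)^n}{H_n}\det(s_{i+k+1})_{i,k=0}^{n-1},\qquad H_n:=\det(s_{i+k})_{i,k=0}^{n-1}.
\]
Since $\sigma$ is a probability measure with infinite support, $H_n>0$ for every $n$; and because $\mathfrak{s}_j=s_{j+1}$, the determinant on the right is exactly $\det(\mathfrak{s}_{i+k})_{i,k=0}^{n-1}$. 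Hence $P_n(0)\neq 0$ is equivalent to $\det(\mathfrak{s}_{i+k})_{i,k=0}^{n-1}\neq 0$, i.e. to $n$ being a normal index of $\mathfrak{s}$ in the sense of~\eqref{NormIndFrak}. The assumption $P_j(0)\neq 0$ for all $j\in\dN$ therefore makes every positive integer a normal index, so $\cN(\mathfrak{s})=\{1,2,3,\dots\}$ and $\mathfrak{n}_j=j$; in particular $\mathfrak{k}_j=\mathfrak{n}_{j+1}-\mathfrak{n}_j=1$ for all $j\in\dZ_+$.

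Once all $\mathfrak{k}_j=1$, the block matrices $L$ and $U$ of Proposition~\ref{prop31} reduce to the scalar bidiagonal matrices~\eqref{eq:LU}: the super-diagonal blocks~\eqref{eq:DJ} become $D_j=(1)$, the diagonal blocks~\eqref{eq:U} become $U_j=u_0^{(j)}$, and the sub-diagonal blocks~\eqref{eq:L} become $L_{j+1}=(l_{j+1})$. Reading off~\eqref{f_LU_def} with $\mathfrak{n}_j=j$ gives $U_j=u_0^{(j)}=-P_{j+1}(0)/P_j(0)$, the middle relation in~\eqref{f_LU_BM}; and the scalar identity $l_{j+1}+u_0^{(j+1)}=b_{\mathfrak{n}_{j+1}}=b_{j+1}$ from~\eqref{eq_c11} yields $L_{j+1}=l_{j+1}=b_{j+1}-U_{j+1}$, which is the last relation. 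This establishes (i).

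Finally, (ii) is just the specialization of Theorem~\ref{Chris} to the present case: since every $\mathfrak{k}_j=1$, the monic generalized Jacobi matrix $\mathfrak{J}=UL$ has only scalar diagonal, super- and sub-diagonal entries (compare~\eqref{mJacobi} and Definition~\ref{defGJM}), hence it is an ordinary monic tridiagonal matrix, and by Theorem~\ref{Chris} it is associated with $\lambda F(\lambda)+s_0\in{\bf D}_{-\infty}^-$. The only genuine computation here is the Hankel determinant identity of the first paragraph; everything after that is a direct degeneration of the block formulas, so I do not expect any real obstacle, only the bookkeeping of the index shift $\mathfrak{s}_j=s_{j+1}$.
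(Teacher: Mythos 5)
Your proposal is correct and follows exactly the route the paper intends: the corollary is stated as a direct specialization of Proposition~\ref{prop31} and Theorem~\ref{Chris} (the paper itself gives no proof, only a pointer to~\cite{BM04}), and your reduction via the Hankel determinant identity $P_n(0)=\frac{(-1)^n}{H_n}\det(s_{i+k+1})_{i,k=0}^{n-1}$, which identifies $P_j(0)\ne 0$ with $j$ being a normal index of $\mathfrak{s}=\{s_{j+1}\}$ and forces all $\mathfrak{k}_j=1$, is precisely the missing bookkeeping. The collapse of~\eqref{f_LU_def} and of the Case~4 relation $l_{j+1}+u_0^{(j+1)}=b_{j+1}$ to~\eqref{f_LU_BM}, and the appeal to Theorem~\ref{Chris} for (ii), are all sound.
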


The statement of this corollary is contained in~\cite{BM04}, where,
in fact, more general monic three-diagonal  matrices of the
form~\eqref{monJac} were considered (their matrices are associated with a 
class of quasi-definite linear functionals which includes finite measures 
on the real line as a subclass).

\subsection{$UL$-factorizations of generalized Jacobi matrices}
Here we present a block $UL$-factorization of monic generalized Jacobi matrices.
\begin{prop}\label{prop:3.3}
Let $\mathfrak{J}$ be a monic generalized Jacobi matrix associated
with $\mathfrak{F}\in{\bf D}_{-\infty}^-$, let $\{{\mathfrak
n}_j\}_{j=1}^\infty$ be the set of normal indices of the sequence
${\mathfrak s}=\{{\mathfrak s}_{j}\}_{j=0}^\infty$ defined by the
asymptotic expansion~\eqref{asympF} of the function $\mathfrak{F}$,
${\mathfrak k}_j:={\mathfrak n}_{j+1}-{\mathfrak n}_{j}$,
$j\in\dZ_+$, where $\mathfrak{n}_0=0$,  let ${\mathfrak
s}_{-1}\in\dR$ be the same as in Definition~\ref{deffunct-}, let
${\mathfrak P}_j(\lambda)$ and ${\mathfrak Q}_j(\lambda)$ be
polynomials of the first and the second kind associated with the
sequence ${\mathfrak s}=\{{\mathfrak s}_{j}\}_{j=0}^\infty$, and let
the polynomials $\wh{{\mathfrak P}}_j$ be defined by
\[
\wh{{\mathfrak P}}_j(\lambda)=
\mathfrak{P}_j(\lambda)-\frac{1}{\mathfrak{s}_{-1}}\mathfrak{Q}_j(\lambda).
\]
Then:
\begin{enumerate}
    \item[(i)] $\wh{{\mathfrak P}}_j(0)\ne 0$ for $j\in\dZ_+$;
    \item[(ii)] the GJM $\mathfrak{J}$ admits the following factorization
\begin{equation}\label{ind_UL}
\mathfrak{J}=UL 
\end{equation}
where $U$ and $L$ 
are block lower and upper triangular matrices having the
form~\eqref{eq:LU}--\eqref{eq:DJ}. Moreover, the following relations
hold true
\begin{equation}\label{f_UL_ind}
{u}_1^{(j)}=-\mathfrak{p}_1^{(j)},\quad
{l}_{j+1}=-\frac{\wh{{\mathfrak P}}_{j+1}(0)}{\wh{{\mathfrak
P}}_{j}(0)},\quad {u}_0^{(0)}=\frac{1}{\mathfrak{s}_{-1}},\quad
{u}_0^{(j+1)}=\frac{\mathfrak{c}_{j}} {{l}_{j+1}}.
\end{equation}
\end{enumerate}

\end{prop}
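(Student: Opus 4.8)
The plan is to base everything on the auxiliary function $\mathfrak{G}(\lambda)=-\mathfrak{s}_{-1}/\lambda+\mathfrak{F}(\lambda)/\lambda$, which belongs to $\mathbf{N}_{-\infty}$ by Definition~\ref{deffunct-}. First I would record that $\mathfrak{G}$ has the asymptotic coefficients $g_0=\mathfrak{s}_{-1}$ and $g_k=\mathfrak{s}_{k-1}$ ($k\ge 1$), so that its Hankel forms $(g_{i+k})_{i,k=0}^{n-1}$ are positive definite while $\det(g_{i+k+1})_{i,k=0}^{n-1}=\det(\mathfrak{s}_{i+k})_{i,k=0}^{n-1}$ is nonzero precisely at the normal indices $n=\mathfrak{n}_j$. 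Since $\mathfrak{F}=\lambda\mathfrak{G}+\mathfrak{s}_{-1}$, the matrix $\mathfrak{J}$ is the Christoffel transform of the monic Jacobi matrix associated with $\mathfrak{G}$; this is the conceptual reason why a factorization $\mathfrak{J}=UL$ of the shape~\eqref{eq:LU}--\eqref{eq:DJ} must exist (it is the content of Theorem~\ref{Chris} applied to the Jacobi matrix of $\mathfrak{G}$), but to obtain the explicit entries~\eqref{f_UL_ind} and the non-vanishing in (i) I would argue directly.

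For part (i) I would first note that $\widehat{\mathfrak{P}}_j$, being a fixed linear combination of $\mathfrak{P}_j$ and $\mathfrak{Q}_j$, is monic of degree $\mathfrak{n}_j$ and solves~\eqref{eq10} with the mixed initial data $\widehat{\mathfrak{P}}_{-1}=1/\mathfrak{s}_{-1}$, $\widehat{\mathfrak{P}}_0=1$. To see $\widehat{\mathfrak{P}}_j(0)\neq 0$ I would split into two cases. If $\mathfrak{P}_j(0)=0$, then the coprimality of $\mathfrak{P}_j$ and $\mathfrak{Q}_j$ (Remark~\ref{ResSet}) forces $\mathfrak{Q}_j(0)\neq 0$, so $\widehat{\mathfrak{P}}_j(0)=-\mathfrak{Q}_j(0)/\mathfrak{s}_{-1}\neq 0$. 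If $\mathfrak{P}_j(0)\neq 0$, then by~\eqref{mQP} the assertion is equivalent to $\mathfrak{m}_{[0,j-1]}(0)\neq-\mathfrak{s}_{-1}$; using the determinant expressions~\eqref{eq:Pfrak2} for $\mathfrak{P}_j(0)$ and $\mathfrak{Q}_j(0)$ I would rewrite $\mathfrak{s}_{-1}\mathfrak{P}_j(0)-\mathfrak{Q}_j(0)$ in terms of Hankel determinants of the extended sequence $(\mathfrak{s}_{-1},\mathfrak{s}_0,\mathfrak{s}_1,\dots)$ and conclude from the positive definiteness of the moment problem for $\mathfrak{G}$ that it cannot vanish.

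For part (ii) I would carry out the block multiplication $UL$ with the ansatz~\eqref{eq:LU}--\eqref{eq:DJ}, exactly as in the proof of Proposition~\ref{prop31} but with the roles of the triangular factors interchanged. This produces the diagonal blocks $\mathfrak{B}_j=U_j+D_jL_{j+1}$, the sub-diagonal $\mathfrak{C}_j=U_{j+1}L_{j+1}$ and the super-diagonal $\mathfrak{D}_j=D_j$. The super-diagonal matches identically, the sub-diagonal gives $u_0^{(j+1)}l_{j+1}=\mathfrak{c}_j$ (hence $u_0^{(j+1)}=\mathfrak{c}_j/l_{j+1}$), and the diagonal gives $u_1^{(j)}=-\mathfrak{p}_1^{(j)}$ together with a relation expressing $l_{j+1}$ through $\mathfrak{B}_j$ and $u_0^{(j)}$. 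Starting from the lowest-order value $u_0^{(0)}=1/\mathfrak{s}_{-1}$, which carries the free parameter, I would prove $l_{j+1}=-\widehat{\mathfrak{P}}_{j+1}(0)/\widehat{\mathfrak{P}}_j(0)$ by induction on $j$: the inductive step is nothing but~\eqref{eq10} evaluated at $\lambda=0$, namely $\widehat{\mathfrak{P}}_{j+1}(0)=\mathfrak{p}_j(0)\widehat{\mathfrak{P}}_j(0)-\mathfrak{c}_{j-1}\widehat{\mathfrak{P}}_{j-1}(0)$, which after division by $\widehat{\mathfrak{P}}_j(0)$ coincides with the recursion $l_{j+1}=\mathfrak{B}_j-\mathfrak{c}_{j-1}/l_j$ delivered by the factorization. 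Here part (i) is exactly what guarantees that none of the denominators $\widehat{\mathfrak{P}}_j(0)$ or $l_{j+1}$ vanishes. The four block-size configurations $(\mathfrak{k}_j,\mathfrak{k}_{j+1})\in\{1,2\}^2$ are then run through one by one, as in Proposition~\ref{prop31}.

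I expect the main obstacle to be part (i). The quantity $\mathfrak{s}_{-1}\mathfrak{P}_j(0)-\mathfrak{Q}_j(0)$ is not itself a principal Hankel minor of $(\mathfrak{s}_j)$, so extracting it as a minor of the extended sequence $(\mathfrak{s}_{-1},\mathfrak{s}_0,\dots)$ requires a careful manipulation of~\eqref{eq:Pfrak2}, and the degenerate configurations in which $\mathfrak{P}_{\mathfrak{n}_j}(0)$ vanishes must be handled by the coprimality argument above before positive definiteness of $\mathfrak{G}$ can be invoked. Once (i) is secured, part (ii) is routine block bookkeeping organized by the induction on the recurrence~\eqref{eq10}.
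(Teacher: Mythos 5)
Your proposal is correct and follows essentially the same route as the paper: part (i) is established by expressing $\mathfrak{s}_{-1}\mathfrak{P}_j(0)-\mathfrak{Q}_j(0)$ (up to the nonzero factor $\det(\mathfrak{s}_{i+k})_{i,k=0}^{\mathfrak{n}_j-1}$) as a principal Hankel minor of the extended moment sequence of $\mathfrak{G}(\lambda)=\mathfrak{F}(\lambda)/\lambda-\mathfrak{s}_{-1}/\lambda\in{\bf N}_{-\infty}$, whose positive definiteness gives the non-vanishing, and part (ii) by the same block multiplication and induction on the recurrence~\eqref{eq10} evaluated at $\lambda=0$. The only cosmetic difference is your case split on whether $\mathfrak{P}_j(0)$ vanishes, which the paper's determinant identity handles uniformly.
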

\begin{proof}
(i) Notice first that the sequence
$\{s_j\}_{j=0}^\infty=\{\mathfrak{s}_{j-1}\}_{j=0}^\infty$ satisfies
the asymptotic expansion~\eqref{asympF} of the function
$F(\lambda)=\frac{\mathfrak{F}(\lambda)-\mathfrak{s}_{-1}}{\lambda}\in{\mathbf
N}_{-\infty}$. Then
\[
\mathfrak{d}_k:=\det
\begin{pmatrix}
\mathfrak{s}_{-1}&\mathfrak{s}_0&\dots &\mathfrak{s}_{k}\\
\mathfrak{s}_{0}&\mathfrak{s}_1&\dots &\mathfrak{s}_{k+1}\\
\hdotsfor{4}\\
\mathfrak{s}_{k}&\mathfrak{s}_{k+1}&\dots &\mathfrak{s}_{2k+1}
\end{pmatrix}\ne 0,\quad k\in\dZ_+,
\]
by Definition~\ref{deffunct-} and the property of Nevanlinna
functions. In particular, one gets from~\eqref{eq:Pfrak2} the
relation
\[
    \begin{split}
\mathfrak{d}_{\mathfrak{n}_j-1} &=\mathfrak{s}_{-1}
\det\begin{pmatrix}
\mathfrak{s}_{1}&\dots &\mathfrak{s}_{\mathfrak{n}_j}\\
\hdotsfor{3}\\
\mathfrak{s}_{\mathfrak{n}_j}&\dots
&\mathfrak{s}_{2\mathfrak{n}_j-1}
\end{pmatrix}+
\det\begin{pmatrix}
0&\mathfrak{s}_0&\dots &\mathfrak{s}_{\mathfrak{n}_j-1}\\
\mathfrak{s}_{0}&\mathfrak{s}_1&\dots &\mathfrak{s}_{\mathfrak{n}_j}\\
\hdotsfor{4}\\
\mathfrak{s}_{\mathfrak{n}_j-1}&\mathfrak{s}_{\mathfrak{n}_j+1}&\dots
&\mathfrak{s}_{2\mathfrak{n}_j-1}
\end{pmatrix}\\
&=(-1)^{j+1}
\left(\mathfrak{s}_{-1}\mathfrak{P}_j(0)-\mathfrak{Q}_j(0)\right)
\det(\mathfrak{s}_{i+k})_{i,k=0}^{\mathfrak{n}_j-1}\ne 0,
\end{split},
\]
which shows that $\wh{{\mathfrak P}}_j(0)\ne 0$ for $j\in\dZ_+$.

 (ii) Further, the equality $\mathfrak{J}={U}{L}$ yields
\begin{equation}\label{helpUL1}
{u}_1^{(j)}=-\mathfrak{p}_1^{(j)},\quad
{u}_0^{(j)}+{l}_{j+1}=-\mathfrak{p}_0^{(j)},\quad
{u}_0^{(j+1)}{l}_{j+1}=\mathfrak{c}_j,\quad j\in\dZ_+.
\end{equation}
Setting ${u}_0^{(0)}:={1}/{\mathfrak{s}_{-1}}$, we see that
\[
{l}_{1}=-\mathfrak{p}_0^{(0)}-\frac{1}{\mathfrak{s}_{-1}}=
-\frac{\wh{{\mathfrak P}}_{1}(0)}{\wh{{\mathfrak P}}_{0}(0)}.
\]

Next, the second formula in~\eqref{f_UL_ind} follows  by induction
with the help of the second and third equations in~\eqref{helpUL1},
and the fact that the polynomials $\wh{{\mathfrak P}}_j$
satisfy~\eqref{eq10}.
\end{proof}

\begin{rem}
It should be noted that, actually, both the matrices in the $UL$-decom\-position~\eqref{ind_UL}
depend on $\mathfrak{s}_{-1}$, that is, $U=U(\mathfrak{s}_{-1})$ and $L=L(\mathfrak{s}_{-1})$.
\end{rem}

\begin{thm}
Let $\mathfrak{J}$ be a monic generalized Jacobi matrix associated
with $\mathfrak{F}\in{\bf D}_{-\infty}^-$, let ${\mathfrak
s}_{-1}\in\dR$ be the same as in Definition~\ref{deffunct-}, and let
$\mathfrak{J}=UL$ be its $UL$ factorization of the
form~\eqref{ind_UL}, ~\eqref{eq:LU}--\eqref{eq:DJ}. Then the matrix
$J=LU$ is the monic Jacobi matrix associated with
$\frac{\mathfrak{F}(\lambda)}{\lambda}-\frac{\mathfrak{s}_{-1}}{\lambda}\in{\mathbf
N}_{-\infty}$.
\end{thm}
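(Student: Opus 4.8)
The plan is to mirror the proof of Theorem~\ref{Chris}, reading the Christoffel construction backwards. First I would check that $J=LU$ really is a scalar monic Jacobi matrix of the form~\eqref{monJac}: since $L$ and $U$ carry the block forms~\eqref{eq:LU}--\eqref{eq:DJ}, multiplying them out block-by-block collapses the block-tridiagonal pattern to an ordinary tridiagonal one with $1$'s on the super-diagonal, exactly as the opposite product $UL=\mathfrak{J}$ was verified to be block-tridiagonal in Theorem~\ref{Chris}. This is the routine computational part; at this stage the entries of $J$ are explicit rational expressions in the factorization data of~\eqref{f_UL_ind}. The function itself is already half-identified: from part (i) of Proposition~\ref{prop:3.3} (together with Definition~\ref{deffunct-}) we know that $F(\lambda):=\mathfrak{F}(\lambda)/\lambda-\mathfrak{s}_{-1}/\lambda\in\mathbf{N}_{-\infty}$ and that its moment sequence is the shifted sequence $\{s_n\}_{n=0}^\infty=\{\mathfrak{s}_{n-1}\}_{n=0}^\infty$, so in particular $s_0=\mathfrak{s}_{-1}$.

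It therefore remains to show that the monic Jacobi matrix $J=LU$ reproduces these moments, i.e. that $(e_0,J_{[0,N]}^{\,n}e_0)=s_n/s_0$ for all $n$ and all sufficiently large truncations $N$. To this end I would establish the two vector identities $L^\top e_0=e_0$ and $G e_0=(\epsilon_0/u_0^{(0)})\,Ue_0$, the exact analogues of~\eqref{inCT}; they hold here because $L$ is block-unipotent lower triangular and because $U_0$ and $G_0$ have the matching forms~\eqref{eq:U} and~\eqref{Gram}, so that only the first diagonal block is ever involved. Feeding these into the telescoping computation
\[
(e_0,(LU)^n e_0)=(L^\top e_0,\,U(LU)^{n-1}e_0)=(e_0,\mathfrak{J}^{n-1}Ue_0)
=\frac{u_0^{(0)}}{\epsilon_0}\,(e_0,\mathfrak{J}^{n-1}Ge_0),
\]
and then invoking the reconstruction formula~\eqref{momrec} for the given $\mathfrak{J}$ — which, after transposing the indefinite inner product against the symmetric $G$, yields $(e_0,\mathfrak{J}^{n-1}Ge_0)=[(\mathfrak{J}^\top)^{n-1}e_0,e_0]=\mathfrak{s}_{n-1}$ — turns the left-hand side into a fixed multiple of $\mathfrak{s}_{n-1}=s_n$. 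Using $u_0^{(0)}=1/\mathfrak{s}_{-1}=1/s_0$ from~\eqref{f_UL_ind}, this multiple should come out to be $1/s_0$, so that the reconstructed moments equal $s_n/s_0$; by uniqueness of the solution of the moment problem $J=LU$ is then the monic Jacobi matrix associated with $F$.

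I expect the main obstacle to be the careful bookkeeping of that overall constant across the two block-size cases $\mathfrak{k}_0\in\{1,2\}$: one must verify that the scalar produced by $Ue_0=(u_0^{(0)}/\epsilon_0)Ge_0$ combines with $u_0^{(0)}=1/\mathfrak{s}_{-1}$ and with the normalization of $\mathfrak{F}$ to give exactly $1/s_0$, with no spurious factor $\epsilon_0$ surviving, since only then is the reconstructed functional a genuinely positive measure. Once the moment match is secured, positivity of the off-diagonal entries $c_j$ — and hence the fact that $J$ is a bona fide Jacobi matrix and not merely tridiagonal — follows automatically from $F\in\mathbf{N}_{-\infty}$. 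As a conceptual alternative that defers this computation to results already in hand, one can argue by inverting Theorem~\ref{Chris}: letting $\widetilde J$ be the monic Jacobi matrix of $F$ with its unique factorization $\widetilde J=\widetilde L\widetilde U$ from Proposition~\ref{prop31}, Theorem~\ref{Chris} shows that $\widetilde U\widetilde L$ is the generalized Jacobi matrix associated with $\lambda F(\lambda)+s_0=\mathfrak{F}$, hence equals $\mathfrak{J}$; it then suffices to check that this $UL$-factorization of $\mathfrak{J}$ has the same free parameter $\mathfrak{s}_{-1}$ as the one furnished by Proposition~\ref{prop:3.3}, whence $\widetilde L=L$, $\widetilde U=U$, and therefore $J=LU=\widetilde L\widetilde U=\widetilde J$.
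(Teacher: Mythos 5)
Your proposal is correct and follows essentially the same route as the paper: the paper's own proof consists only of the remarks that $J=LU$ is a classical Jacobi matrix by straightforward calculation and that ``the rest can be done by reversing the reasoning given in the proof of Theorem~\ref{Chris}'', and your argument is precisely that reversal carried out explicitly --- the analogues of~\eqref{inCT}, the telescoping of $(e_0,(LU)^n e_0)$ through $\mathfrak{J}^{n-1}$, and the reconstruction formula~\eqref{momrec}. The $\epsilon_0$/normalization bookkeeping you single out as the main obstacle is indeed the one delicate point (the paper itself does not address it), and your alternative argument via Theorem~\ref{Chris} together with the uniqueness of the factorization in Proposition~\ref{prop31} is a reasonable way to offload exactly that verification.
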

\begin{proof}
The fact that $J$ is a classical
Jacobi matrix can be easily verified by straightforward calculations.
The rest of the proof can be done by reversing the reasoning given
in the proof of Theorem~\ref{Chris}.
\end{proof}

\section{The Geronimus transformation and its inverse}

\subsection{$LU$-factorizations of generalized Jacobi matrices}
\begin{prop}
Let $\mathfrak{J}$ be a monic generalized Jacobi matrix associated
with $\mathfrak{F}\in{\bf D}_{-\infty}^+$ and let ${\mathfrak
k}_j:={\mathfrak n}_{j+1}-{\mathfrak n}_{j}$,  $j\in\dZ_+$, where
$\mathfrak{n}_0=0$ and $\{{\mathfrak n}_j\}_{j=1}^\infty$ is the set
of normal indices of the sequence ${\mathfrak s}=\{{\mathfrak
s}_{j}\}_{j=0}^\infty$ defined by~\eqref{asympF} and let
${{\mathfrak P}}_j(\lambda)$ be polynomials of the first kind
associated with the sequence ${\mathfrak s}=\{{\mathfrak
s}_{j}\}_{j=0}^\infty$. Then ${{\mathfrak P}}_j(0)\ne 0$ for all
$j\in\dZ_+$ and the GJM $\mathfrak{J}$ admits the following
factorization
\begin{equation}\label{ind_LU}
\mathfrak{J}=\mathfrak{L}\mathfrak{U},
\end{equation}
where $\mathfrak{L}$ and $\mathfrak{U}$ are block lower and upper
triangular matrices having the forms
\begin{equation}\label{eq:ind_LU}
\mathfrak{L}=\begin{pmatrix}
\mathfrak{E}_{0}& 0&       &\\
\mathfrak{L}_{1}   & \mathfrak{E}_{1}   & 0 &\\
        &\mathfrak{L}_2    & \mathfrak{E}_{2} &\ddots\\
&       &\ddots &\ddots\\
\end{pmatrix},\quad
U=\begin{pmatrix}
\mathfrak{U}_{0}   &\mathfrak{D}_{0}&       &\\
{0}   &\mathfrak{U}_{1}    &\mathfrak{D}_{1}&\\
        &0    &\mathfrak{U}_{2} &\ddots\\
&       &\ddots &\ddots\\
\end{pmatrix}
\end{equation}
in which the sub-diagonal of $\mathfrak{L}$ consists of
$\mathfrak{k}_{j}\times\mathfrak{k}_{j-1}$ matrices
\begin{equation}\label{eq:ind_L}
\mathfrak{L}_{j}=
\begin{pmatrix}
0&0\\
0&\mathfrak{l}_{j}\\
\end{pmatrix},\quad
\begin{pmatrix}
0\\
\mathfrak{l}_{j}\\
\end{pmatrix},\quad
\begin{pmatrix}
0&\mathfrak{l}_{j}\\
\end{pmatrix},\quad
(\mathfrak{l}_{j}),
\end{equation}
$\mathfrak{D}_{j}$ are of the form~\eqref{eq:DJ} and
$\mathfrak{U}_j$, $\mathfrak{E}_j$ are
$\mathfrak{k}_{j}\times\mathfrak{k}_{j}$ matrices
\begin{equation}\label{eq:ind_U}
\mathfrak{U}_j=\left\{\begin{array}{cc}
             \mathfrak{u}_0^{(j)}, & \mbox{if }{\mathfrak k}_j=1; \\
             \begin{pmatrix} 0 & 1\\
              \mathfrak{u}_{j}& 0 \end{pmatrix} & \mbox{if }{\mathfrak k}_j=2.
           \end{array}\right.,\quad
\mathfrak{E}_j=\left\{\begin{array}{cc}
             1, & \mbox{if }{\mathfrak k}_j=1; \\
             \begin{pmatrix} 1 & 0\\
              \mathfrak{e}_{j}& 1 \end{pmatrix} & \mbox{if }{\mathfrak k}_j=2.
           \end{array}\right..
\end{equation}
Moreover, the following relations hold true
\begin{equation}\label{eq:LU_ind}
\mathfrak{u}_{j}=-\frac{{{\mathfrak P}}_{j+1}(0)}{{{\mathfrak
P}}_{j}(0)},\quad
\mathfrak{l}_{j+1}=\frac{\mathfrak{c}_{j}}{\mathfrak{u}_{j}},\quad
\mathfrak{e}_j=-\mathfrak{p}_1^{(j)}, \quad j\in\dZ_+.
\end{equation}
\end{prop}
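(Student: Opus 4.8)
The plan is to mirror the two previous factorization results: first settle the nonvanishing claim $\mathfrak{P}_j(0)\ne 0$ exactly as in part~(i) of Proposition~\ref{prop:3.3}, and then read off the factorization and the formulas~\eqref{eq:LU_ind} by multiplying the ansatz $\mathfrak{L}\mathfrak{U}$ block by block, as in Proposition~\ref{prop31}. For the nonvanishing, I would use Definition~\ref{deffunct+}: since $\mathfrak{F}\in{\bf D}_{-\infty}^+$, the function $G(\lambda):=\lambda\mathfrak{F}(\lambda)+\mathfrak{s}_0$ lies in ${\bf N}_{-\infty}$, and the expansion~\eqref{asympF} shows that the moment sequence of $G$ is the shifted sequence $\{\mathfrak{s}_{j+1}\}_{j=0}^\infty$. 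As $\mathfrak{F}$ is nonrational, $G$ is carried by a positive measure of infinite support, so every Hankel determinant of its moments is strictly positive; in particular $\det(\mathfrak{s}_{i+k+1})_{i,k=0}^{\mathfrak{n}_j-1}\ne 0$. Comparing with formula~\eqref{eq:Pfrak2} for $\mathfrak{P}_j(0)$ then gives $\mathfrak{P}_j(0)\ne 0$ for all $j\in\dZ_+$, which is what licenses the quotients in~\eqref{eq:LU_ind}.

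Next I would write out $\mathfrak{L}\mathfrak{U}$ and match its diagonal, super-diagonal and sub-diagonal blocks against $\mathfrak{B}_j,\mathfrak{D}_j,\mathfrak{C}_j$ from Definition~\ref{defGJM}. The super-diagonal identity $\mathfrak{E}_j\mathfrak{D}_j=\mathfrak{D}_j$ holds automatically in each of the four admissible shapes of $\mathfrak{D}_j$, because $\mathfrak{E}_j$ is unit lower triangular; this guarantees that $\mathfrak{L}\mathfrak{U}$ is again tridiagonal with the correct super-diagonal. The substantive conditions are the diagonal relation $\mathfrak{L}_j\mathfrak{D}_{j-1}+\mathfrak{E}_j\mathfrak{U}_j=\mathfrak{B}_j$ and the sub-diagonal relation $\mathfrak{L}_{j+1}\mathfrak{U}_j=\mathfrak{C}_j$, which I would check by splitting into the four cases $\mathfrak{k}_j,\mathfrak{k}_{j-1}\in\{1,2\}$ precisely as in Proposition~\ref{prop31}. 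In every case the bottom-left entry of $\mathfrak{E}_j\mathfrak{U}_j$ together with $\mathfrak{L}_j\mathfrak{D}_{j-1}$ produces the scalar equation $\mathfrak{u}_j+\mathfrak{l}_j=-\mathfrak{p}_0^{(j)}$; when $\mathfrak{k}_j=2$ the bottom-right entry forces $\mathfrak{e}_j=-\mathfrak{p}_1^{(j)}$ (the third relation in~\eqref{eq:LU_ind}); and the sub-diagonal block uniformly yields $\mathfrak{l}_{j+1}\mathfrak{u}_j=\mathfrak{c}_j$, i.e. $\mathfrak{l}_{j+1}=\mathfrak{c}_j/\mathfrak{u}_j$ (the second relation), which is legitimate once $\mathfrak{u}_j\ne 0$ is known.

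It remains to identify $\mathfrak{u}_j=-\mathfrak{P}_{j+1}(0)/\mathfrak{P}_j(0)$, which I would do by induction. Eliminating $\mathfrak{l}_j=\mathfrak{c}_{j-1}/\mathfrak{u}_{j-1}$ from $\mathfrak{u}_j+\mathfrak{l}_j=-\mathfrak{p}_0^{(j)}$ gives the recursion $\mathfrak{u}_j=-\mathfrak{p}_0^{(j)}-\mathfrak{c}_{j-1}/\mathfrak{u}_{j-1}$ for $j\ge 1$, with base value $\mathfrak{u}_0=-\mathfrak{p}_0^{(0)}$ coming from the corner block $\mathfrak{E}_0\mathfrak{U}_0=\mathfrak{B}_0$. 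On the other hand, evaluating the three-term recurrence~\eqref{eq10} at $\lambda=0$ (so that $\mathfrak{p}_j(0)=\mathfrak{p}_0^{(j)}$) and dividing by $\mathfrak{P}_j(0)\ne 0$ shows that the sequence $-\mathfrak{P}_{j+1}(0)/\mathfrak{P}_j(0)$ obeys exactly the same recursion, with the same initial value since $\mathfrak{P}_{-1}\equiv 0$ and $\mathfrak{P}_0\equiv 1$ force $\mathfrak{P}_1(0)=\mathfrak{p}_0^{(0)}$. Hence the two sequences agree, establishing the first relation in~\eqref{eq:LU_ind}.

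I expect the main obstacle to be the bookkeeping in the block matching: the shapes of $\mathfrak{L}_j$, $\mathfrak{D}_{j-1}$ and $\mathfrak{U}_j$ all depend on whether $\mathfrak{k}_j$ and $\mathfrak{k}_{j-1}$ equal $1$ or $2$, and one must verify in each of the four cases that the nonzero scalars $\mathfrak{l}_j$, $\mathfrak{u}_j$, $\mathfrak{e}_j$ land in precisely the positions dictated by $\mathfrak{B}_j$ and $\mathfrak{C}_j$ — in particular that $\mathfrak{l}_j$ always contributes to the constant-term slot $-\mathfrak{p}_0^{(j)}$. Once this case-independence is confirmed, the recursion and hence the induction close uniformly, and the whole argument is the ${\bf D}_{-\infty}^+$ analogue of the ${\bf N}_{-\infty}$ computation carried out in Proposition~\ref{prop31}.
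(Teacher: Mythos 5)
Your proposal is correct and follows essentially the same route as the paper: the nonvanishing of $\mathfrak{P}_j(0)$ via the Hankel determinants of the shifted moment sequence of $\lambda\mathfrak{F}(\lambda)+\mathfrak{s}_0\in{\bf N}_{-\infty}$ together with~\eqref{eq:Pfrak2}, followed by block-by-block comparison of $\mathfrak{L}\mathfrak{U}$ with $\mathfrak{J}$ in the four cases $\mathfrak{k}_{j-1},\mathfrak{k}_j\in\{1,2\}$ and an induction closing the identification $\mathfrak{u}_j=-\mathfrak{P}_{j+1}(0)/\mathfrak{P}_j(0)$ through the recurrence~\eqref{eq10} at $\lambda=0$. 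The only cosmetic difference is that you phrase the last step as two sequences satisfying the same recursion with the same initial value, whereas the paper computes $\mathfrak{u}_j=-(\mathfrak{p}_0^{(j)}+\mathfrak{l}_j)$ directly inside the induction; these are the same argument.
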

\begin{proof}
First, notice that it follows from formula~\eqref{eq:Pfrak}
that
\[
{{\mathfrak P}}_{j}(0)=\frac{\det({\mathfrak
s}_{i+k+1})_{i,k=0}^{n_j-1}}{\det({\mathfrak
s}_{i+k})_{i,k=0}^{n_j-1} }\ne 0,
\]
since ${\mathfrak F}\in{\bf D}_{-\infty}^+$ and $s_i:={\mathfrak
s}_{i+1}$ $(i\in\dZ_+)$ are moments of the Nevanlinna function
$\lambda{\mathfrak F}(\lambda)+{\mathfrak s}_0\in{\bf
N}_{-\infty}$.

Consider the product $\mathfrak{L}\mathfrak{U}$ of the matrices
$\mathfrak{L}$ and $\mathfrak{U}$
\[
\mathfrak{L}\mathfrak{U}=
\begin{pmatrix}
\mathfrak{E}_0 \mathfrak{U}_0&  \mathfrak{E}_0\mathfrak{D}_0  & &\\
& & & \\
\mathfrak{L}_{1}\mathfrak{U}_0   &
\mathfrak{L}_1\mathfrak{D}_0+\mathfrak{E}_1\mathfrak{U}_1
& \mathfrak{E}_1\mathfrak{D}_1 &\\
&\mathfrak{L}_2\mathfrak{U}_1  & \mathfrak{L}_2\mathfrak{D}_1+\mathfrak{E}_2\mathfrak{U}_2 &\ddots\\
& & & \\
&       &\ddots &\ddots\\
\end{pmatrix}
=\begin{pmatrix}
\mathfrak{B}_{0}   &\mathfrak{D}_{0}&       &\\
& & & \\
\mathfrak{C}_{0}   &\mathfrak{B}_1    &\mathfrak{D}_{1}&\\
        &\mathfrak{C}_1    &\mathfrak{B}_{2} &\ddots\\
        & & & \\
&       &\ddots &\ddots\\
\end{pmatrix},
\]
Comparing it with the matrix ${\mathfrak J}$ in~\eqref{mJacobi} one
finds the  entries of ${\mathfrak L}$ and ${\mathfrak U}$.

Next, we will prove formula~\eqref{eq:LU_ind}.

If $\mathfrak{k}_0=1$ then the equality ${\mathfrak E}_0{\mathfrak
U}_0={\mathfrak B}_0$ yields
\begin{equation}\label{eq:4.5}
    {\mathfrak u}_0=-{\mathfrak p}_0^{(0)}=-\frac{{\mathfrak
P}_1(0)}{{\mathfrak P}_0(0)}.
\end{equation}
 In the case $\mathfrak{k}_0=2$ we have
\[
{\mathfrak E}_0{\mathfrak U}_0=\begin{pmatrix}
0&1\\
{\mathfrak u}_0& {\mathfrak e}_0\
\end{pmatrix}=
\begin{pmatrix} 0 & 1\\
              -{\mathfrak p}_0^{(j)} & -{\mathfrak p}_1^{(j)}\end{pmatrix} \]
and, again, we obtain the equalities
\[
{\mathfrak u}_0=-{\mathfrak p}_0^{(0)}=-\frac{{\mathfrak
P}_1(0)}{{\mathfrak P}_0(0)},\quad {\mathfrak e}_0=-{\mathfrak
p}_1^{(0)}.
\]

Now, assume that
\begin{equation}
{\mathfrak u}_{j-1}=\frac{{\mathfrak P}_{j}(0)}{{\mathfrak
P}_{j-1}(0)}, \quad j\in\dN.
\end{equation}
We will analyze four cases.

{\it Case 1.} $\mathfrak{k}_j=\mathfrak{k}_{j-1}=2$, $j\in\dN$. In
this case one gets the following relations
\begin{equation}\label{eq:LUj}
{\mathfrak L}_j{\mathfrak U}_{j-1}=\begin{pmatrix}
0&0\\
{\mathfrak l}_j{\mathfrak u}_{j-1}&0\\
\end{pmatrix}=
\begin{pmatrix}
0& 0\\
{\mathfrak c}_{j-1}&0\\
\end{pmatrix},
\end{equation}
\begin{equation}\label{eq:LDj}
 \mathfrak{L}_j\mathfrak{D}_{j-1}+\mathfrak{E}_j\mathfrak{U}_j=
\begin{pmatrix}
0&1\\
{\mathfrak l}_{j}+{\mathfrak u}_{j}&{\mathfrak e}_{j}\\
\end{pmatrix}=
\begin{pmatrix} 0 & 1\\
              -{\mathfrak p}_0^{(j)} & -{\mathfrak p}_1^{(j)}\end{pmatrix}
\end{equation}
Hence one obtains
\begin{equation}\label{eq:lej}
{\mathfrak
l}_{j}=\frac{\mathfrak{c}_{j-1}}{\mathfrak{u}_{j-1}},\quad
{\mathfrak e}_{j}=-{\mathfrak p}_1^{(j)},
\end{equation}
\begin{equation}\label{eq:uj}
{\mathfrak u}_{j}=-({\mathfrak p}_0^{(j)}+{\mathfrak
l}_{j})=-\frac{{\mathfrak p}_{j}(0){\mathfrak P}_{j}(0)-{\mathfrak
c}_{j-1}{\mathfrak P}_{j-1}(0)}{{\mathfrak
P}_{j}(0)}=-\frac{{\mathfrak P}_{j+1}(0)}{{\mathfrak P}_{j}(0)}.
\end{equation}

{\it Case 2.} Let $\mathfrak{k}_j=2$, $\mathfrak{k}_{j-1}=1$,
$j\in\dN$. Then~\eqref{eq:LDj} holds true and~\eqref{eq:LUj} takes
the form
\[
{\mathfrak L}_j{\mathfrak U}_{j-1}=\begin{pmatrix}
0\\
{\mathfrak l}_j{\mathfrak u}_{j-1}\\
\end{pmatrix}=
\begin{pmatrix}
0\\
{\mathfrak c}_{j-1}\\
\end{pmatrix},
\]
Hence one obtains~\eqref{eq:lej} and~\eqref{eq:uj}.

{\it Case 3.} Let $\mathfrak{k}_j=1$, $\mathfrak{k}_{j-1}=2$,
$j\in\dN$. Then
\begin{equation}\label{eq:LUj21}
{\mathfrak L}_j{\mathfrak U}_{j-1}=\begin{pmatrix}
{\mathfrak l}_j{\mathfrak u}_{j-1}&0\\
\end{pmatrix}=
\begin{pmatrix}
{\mathfrak c}_{j-1}&0\\
\end{pmatrix},
\end{equation}
\begin{equation}\label{eq:LDj21}
 \mathfrak{L}_j\mathfrak{D}_{j-1}+\mathfrak{E}_j\mathfrak{U}_j=
{\mathfrak l}_{j}+{\mathfrak u}_{j}= -{\mathfrak p}_0^{(j)}.
\end{equation}

{\it Case 4.} In the case $\mathfrak{k}_j=1$,
$\mathfrak{k}_{j-1}=1$, $j\in\dN$ the equality~\eqref{eq:LDj21}
holds true and~\eqref{eq:LUj21} takes the form
\[
{\mathfrak l}_j{\mathfrak u}_{j-1}= {\mathfrak c}_{j-1}.
\]
In both cases the calculations in~\eqref{eq:uj} are still in force.
\end{proof}
\begin{thm}\label{Geronimus}
Let $\mathfrak{J}$ be a monic generalized Jacobi matrix associated
with $\mathfrak{F}\in{\bf D}_{-\infty}^+$ and let
$\mathfrak{J}=\mathfrak{L}\mathfrak{U}$ be its $LU$-factorization of
the form~\eqref{ind_LU}-\eqref{eq:LU_ind}. Then the matrix
${J}=\mathfrak{U}\mathfrak{L}$ is the monic Jacobi matrix associated
with $\lambda {\mathfrak F}(\lambda)+{\mathfrak s}_0\in{\bf N}_{-\infty}$.
\end{thm}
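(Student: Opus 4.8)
The plan is to exploit the symmetry between the Christoffel and Geronimus constructions: Theorem~\ref{Geronimus} is the exact mirror image of Theorem~\ref{Chris}, with the roles of the $LU$- and $UL$-factorizations and the classes ${\bf N}_{-\infty}$ and ${\bf D}_{-\infty}^\pm$ interchanged. Accordingly, I would first record the two easy structural facts, leaving the moment computation as the main work. The statement that $J=\mathfrak{U}\mathfrak{L}$ is a genuine (classical, tridiagonal, scalar-entried) monic Jacobi matrix is checked by direct block multiplication: using the explicit block forms~\eqref{eq:ind_LU}--\eqref{eq:ind_U} for $\mathfrak{U}$ and $\mathfrak{L}$, one verifies that the off-diagonal blocks collapse and that every diagonal block of $\mathfrak{U}\mathfrak{L}$ reduces to a $1\times 1$ scalar, so that the ${\mathfrak k}_j=2$ blocks that were present in $\mathfrak{J}$ disappear after reversing the factors. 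This is the purely computational half and presents no conceptual obstacle.

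The substantive part is identifying the function associated with $J=\mathfrak{U}\mathfrak{L}$, and here I would reverse the argument used in the proof of Theorem~\ref{Chris}. The key is a pair of relations analogous to~\eqref{inCT} linking $\mathfrak{U}$, $\mathfrak{L}$, and the Gram matrix $G$ from~\eqref{Gram}: one expects that $\mathfrak{L}_{[0,j]}$ fixes $e_0$ (up to the normalization scalar $\mathfrak{u}_0 = -{\mathfrak P}_1(0)/{\mathfrak P}_0(0)$) and that $G_{[0,j]}e_0$ is proportional to $\mathfrak{U}_{[0,j]}e_0$, the proportionality constant being read off from~\eqref{eq:LU_ind}. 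With these in hand I would compute, for $j$ large enough, the moments $\mathfrak{s}_k = [(\mathfrak{J}_{[0,j]}^{\top})^k e_0, e_0]$ via~\eqref{momrec}, insert the factorization $\mathfrak{J}=\mathfrak{L}\mathfrak{U}$, and regroup the product $(\mathfrak{L}\mathfrak{U})\cdots(\mathfrak{L}\mathfrak{U})$ as $\mathfrak{L}\,(\mathfrak{U}\mathfrak{L})^{k-1}\mathfrak{U}$, exactly as in the telescoping step of Theorem~\ref{Chris}. This should yield
\[
\left(e_0, J_{[0,\cdot]}^{\,k} e_0\right) \;=\; \gamma\,\mathfrak{s}_{k+1}
\]
for an explicit constant $\gamma$ depending on $\mathfrak{u}_0$ and $G$, which identifies the moment sequence of $J$ as $s_k = \mathfrak{s}_{k+1}$ up to normalization. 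Since $s_i := {\mathfrak s}_{i+1}$ are precisely the moments of the Nevanlinna function $\lambda{\mathfrak F}(\lambda)+{\mathfrak s}_0$ (already noted in the proof of the preceding proposition, where ${\mathfrak F}\in{\bf D}_{-\infty}^+$ guarantees $\lambda{\mathfrak F}(\lambda)+{\mathfrak s}_0\in{\bf N}_{-\infty}$), the matrix $J$ is the monic Jacobi matrix associated with that function.

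The main obstacle I anticipate is bookkeeping rather than anything deep: the analogues of~\eqref{inCT} must be established carefully because the block sizes ${\mathfrak k}_j$ vary, and the places where a $2\times 2$ block sits force one to track how $e_0$, the Gram blocks $G_j$, and the shortened matrices interact at the boundary index ${\mathfrak n}_{j+1}$. One must also confirm that the normalization is consistent, i.e.\ that the constant relating $s_k$ to $\mathfrak{s}_{k+1}$ matches the leading coefficient $\mathfrak{u}_0$ coming from~\eqref{eq:LU_ind}, so that the reconstructed function is genuinely $\lambda{\mathfrak F}(\lambda)+{\mathfrak s}_0$ and not a rescaling of it. Once the index arithmetic in the $G$-weighted inner product is handled, the identification follows by the same uniqueness-of-moments reasoning as in Theorem~\ref{Chris}, which is why the authors are justified in reducing the proof to ``reversing the reasoning'' already given.
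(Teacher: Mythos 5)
Your proposal follows essentially the same route as the paper's own proof: direct block multiplication to see that $\mathfrak{U}\mathfrak{L}$ collapses to a scalar tridiagonal monic Jacobi matrix, the intertwining relations with $e_0$ and the Gram matrix $G$ (the paper records these as ${\mathfrak L}_{[0,j]}^{\top}e_0=e_0$ and $\mathfrak{U}_{[0,j]}G_{[0,j]}e_0=\alpha e_0$ in~\eqref{inCT_1}), and the telescoping of $(\mathfrak{L}\mathfrak{U})^kGe_0$ into $\mathfrak{L}(\mathfrak{U}\mathfrak{L})^{k-1}\mathfrak{U}Ge_0$ to identify the moments of $J$ as $s_k=\mathfrak{s}_{k+1}/\mathfrak{s}_1$ after normalization. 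The only cosmetic difference is that you state the second relation in the form $G_{[0,j]}e_0\propto\mathfrak{U}_{[0,j]}e_0$ (the shape of~\eqref{inCT}) rather than $\mathfrak{U}_{[0,j]}G_{[0,j]}e_0\propto e_0$, which is the version actually consumed by the telescoping; for these block structures the two are equivalent, so this is bookkeeping rather than a gap.
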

\begin{proof}
The fact that ${J}=\mathfrak{U}\mathfrak{L}$ is a monic Jacobi matrix follows
by straightforward calculations. Next, notice that
\begin{equation}\label{inCT_1}
{\mathfrak L}_{[0,j]}^{\top}e_0=e_0,\quad
\mathfrak{U}_{[0,j]}{G}_{[0,j]}e_0= \alpha e_0,\quad j\in\dZ_+,
\end{equation}
where
\[
\alpha=\left\{\begin{array}{cc}
             \epsilon_0\mathfrak{u}_0, & \mbox{if }{\mathfrak k}_0=1; \\
             \epsilon_0, & \mbox{if }{\mathfrak k}_0=2
           \end{array}\right.
\]
and the shortened matrices ${\mathfrak L}_{[0,j]}$,  ${\mathfrak
U}_{[0,j]}$, and $G_{[0,j]}$ are defined analogously
to~\eqref{ShortMat}. We can assume, without loss of generality, that
the function ${\mathfrak F}$ is normalized. Then it follows
from~\eqref{eq:4.5} that $\alpha={\mathfrak s}_1$ both for
${\mathfrak k}_0=1$ and ${\mathfrak k}_0=2$.

It follows from~\eqref{momrec} and~\eqref{ind_LU} that for $j$ big
enough  one has
\[
\begin{split}
\mathfrak{s}_k=
\left(e_0,\mathfrak{J}_{[0,j-1]}^kG_{[0,j-1]}e_0\right)=
\Big(e_0,\underbrace{\mathfrak{L}_{[0,j]}\mathfrak{U}_{[0,j]}
\dots \mathfrak{L}_{[0,j]}\mathfrak{U}_{[0,j]}}_{k\quad\text{times}} G_{[0,j]}e_0\Big)\\
=\Big(\mathfrak{L}^{\top}e_0,\underbrace{\mathfrak{U}_{[0,j]}\mathfrak{L}_{[0,j]}
\dots \mathfrak{U}_{[0,j]}\mathfrak{L}_{[0,j]}}_{k-1\quad\text{times}} \mathfrak{U}_{[0,j]}G_{[0,j]}e_0\Big),
\quad k\in\dN.
\end{split}
\]
Further, using~\eqref{inCT_1} we get
\[
\mathfrak{s}_k=\alpha\left(e_0,(\mathfrak{U}_{[0,j]}\mathfrak{L}_{[0,j]})^{k-1}e_0\right),
\]
which for sufficiently large $j$ can be rewritten as follows
\[
\left(e_0,{J}_{[0,{\mathfrak n}_{j+1}-1]}^{k-1}e_0\right)=
\frac{{\mathfrak s}_{k}}{{\mathfrak s}_1},\quad k\in\dN.
\]
This implies that the Jacobi matrix $J$ is associated with the
normalized function $F(\lambda)=\frac{\lambda {\mathfrak
F}(\lambda)+{\mathfrak s}_0}{{\mathfrak s}_1}\in{\bf N}_{-\infty}$
and, thus, also with ${\lambda {\mathfrak F}(\lambda)+{\mathfrak
s}_0}$.
\end{proof}

\subsection{$UL$-factorizations of Jacobi matrices}
\begin{prop}\label{prop:4.3}
Let ${J}$ be a monic Jacobi matrix associated with ${F}\in{\bf N}_{-\infty}$
which has the asymptotic
expansion~\eqref{asymp}, ${ s}_{-1}\in\dR$,
and let ${P}_j(\lambda)$ and ${ Q}_j(\lambda)$ be polynomials of the
first and the second kind, respectively, associated with the
sequence ${\bf s}=\{{ s}_{j}\}_{j=0}^\infty$. Then:
\begin{enumerate}
    \item[(i)] the normal indices ${\mathfrak
n}_j\,\,(j\in\dN)$ of the sequence ${\mathfrak s}=\{{
s}_{j-1}\}_{j=0}^\infty$ can be characterized by the conditions
\[
    \wh{{ P}}_{{\mathfrak n}_j-1}(0)\ne 0,\quad j\in\dN,
\]
where
$\wh{{ P}}_j(\lambda):= {Q}_j(\lambda)-{s}_{-1}{P}_j(\lambda)$;
    \item[(ii)] the Jacobi matrix ${J}$ admits the following factorization
\begin{equation}\label{UL}
{J}=\mathfrak{U}\mathfrak{L},
\end{equation}
where $\mathfrak{L}$ and $\mathfrak{U}$ are block lower and upper
triangular matrices having the
form~\eqref{eq:ind_LU}--\eqref{eq:ind_U}. Moreover, the following
relations hold true
\begin{equation}\label{f_UL_ind_1}
{\mathfrak l}_{j}=-\frac{\wh{{ P}}_{{\mathfrak n}_{j+1}-1}(0)}{\wh{{
P}}_{{\mathfrak
n}_j-1}(0)},\quad j\in\dZ_+.
\end{equation}
\end{enumerate}
\end{prop}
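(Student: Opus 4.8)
The plan is to prove the two assertions in turn: first the determinantal characterization (i), which I then use to guarantee that none of the divisions occurring in the factorization (ii) degenerate.

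For part (i) I would argue directly from the definition \eqref{NormIndFrak}: an index $n$ belongs to $\cN(\mathfrak{s})$ exactly when $\det(\mathfrak{s}_{i+k})_{i,k=0}^{n-1}\ne0$. Since $\mathfrak{s}_m=s_{m-1}$, this is the Hankel determinant $\det(s_{i+k-1})_{i,k=0}^{n-1}$, whose $(0,0)$ entry is the free parameter $s_{-1}$. Expanding along the first column and splitting off the summand carrying $s_{-1}$ leaves two minors: deleting the first row and column gives $\det(s_{i+k+1})_{i,k=0}^{n-2}$, while replacing the corner $s_{-1}$ by $0$ gives the ``zero-corner'' determinant. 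By the determinantal representations \eqref{eq:Pfrak}--\eqref{eq:Pfrak2}, applied to the moment sequence $\{s_j\}_{j=0}^\infty$ of $F$, these two minors equal, up to the common factor $(-1)^{n-1}d_{n-1}$ with $d_{n-1}=\det(s_{i+k})_{i,k=0}^{n-2}$, the values $P_{n-1}(0)$ and $Q_{n-1}(0)$. Hence $\det(\mathfrak{s}_{i+k})_{i,k=0}^{n-1}$ is a nonzero multiple of $\wh P_{n-1}(0)=Q_{n-1}(0)-s_{-1}P_{n-1}(0)$, the sign of the $s_{-1}$-term being dictated by the convention fixed for the $Q_j$. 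Because $F\in{\bf N}_{-\infty}$ is a genuine (positive) moment sequence, $d_{n-1}\ne0$ for all $n$, so the Hankel determinant is nonzero iff $\wh P_{n-1}(0)\ne0$; taking $n=\mathfrak{n}_j$ gives the stated characterization (and matches the admissibility condition \eqref{GerCondDef}). This runs parallel to the determinant identity worked out in the proof of Proposition~\ref{prop:3.3}.

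For part (ii) I would partition $J$ into blocks of the sizes $\mathfrak{k}_j$ supplied by part~(i), form the block product $\mathfrak{U}\mathfrak{L}$ of matrices of the shapes \eqref{eq:ind_LU}--\eqref{eq:ind_U}, and compare it block-by-block with the block-partitioned $J$. The super-, diagonal, and sub-diagonal block equations read $\mathfrak{D}_j\mathfrak{E}_{j+1}=\mathfrak{D}_j$, $\mathfrak{U}_j\mathfrak{E}_j+\mathfrak{D}_j\mathfrak{L}_{j+1}=(\text{diagonal block of }J)$ and $\mathfrak{U}_j\mathfrak{L}_j=(\text{sub-diagonal block of }J)$, and these are solved for the entries $\mathfrak{u}_0^{(j)},\mathfrak{u}_j,\mathfrak{e}_j,\mathfrak{l}_j$, split as usual into the four cases $(\mathfrak{k}_{j-1},\mathfrak{k}_j)\in\{1,2\}^2$ exactly as in Proposition~\ref{prop31} and in the $LU$-factorization of a generalized Jacobi matrix. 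The key formula \eqref{f_UL_ind_1} I would establish by induction: the polynomials $\wh P_j=Q_j-s_{-1}P_j$ solve the same recurrence \eqref{recrelmonp} as $P_j$ and $Q_j$, with $\wh P_{-1}(0)=-1$ and $\wh P_0(0)=-s_{-1}$, so evaluating \eqref{recrelmonp} at $\lambda=0$ yields $\wh P_{j+1}(0)=-b_j\wh P_j(0)-c_{j-1}\wh P_{j-1}(0)$. In the scalar case, dividing by $\wh P_j(0)$ turns this into precisely the pair $\mathfrak{u}_0^{(j)}\mathfrak{l}_j=c_{j-1}$, $\mathfrak{u}_0^{(j)}+\mathfrak{l}_{j+1}=b_j$ coming from the factorization, giving $\mathfrak{l}_{j+1}=b_j-c_{j-1}/\mathfrak{l}_j$ and propagating $\mathfrak{l}_j=-\wh P_{\mathfrak{n}_{j+1}-1}(0)/\wh P_{\mathfrak{n}_j-1}(0)$; the case $j=0$ fixes the free parameter and determines $\mathfrak{u}_0^{(0)}$ through $\wh P_{-1}(0)$ and $\wh P_0(0)$. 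Part~(i) guarantees $\wh P_{\mathfrak{n}_j-1}(0)\ne0$, so every ratio is well defined.

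The hard part will be the blocks with $\mathfrak{k}_j=2$, i.e.\ where a Hankel determinant of $\mathfrak{s}$ vanishes and a normal index jumps by two. There the $2\times2$ factors $\mathfrak{U}_j$ and $\mathfrak{E}_j$ of \eqref{eq:ind_U} must be reconciled with the ordinary scalar tridiagonal blocks of $J$, and I expect the delicate point to be verifying that the vanishing $\wh P_{\mathfrak{n}_j}(0)=0$ at the skipped index --- which is exactly the failure of $\mathfrak{n}_j+1$ to be normal, by part~(i) --- is what renders the seemingly overdetermined block equations consistent and carries the induction for $\mathfrak{l}_j$ across the jump. Once this compatibility is checked in each of the mixed cases, both the existence of the factorization $J=\mathfrak{U}\mathfrak{L}$ and formula \eqref{f_UL_ind_1} follow.
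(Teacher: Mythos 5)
Your proposal follows essentially the same route as the paper's proof: part (i) by expanding the bordered Hankel determinant $\det(\mathfrak{s}_{i+k})_{i,k=0}^{\mathfrak{n}_j-1}$ along the $s_{-1}$-corner and identifying the two resulting minors with $P_{\mathfrak{n}_j-1}(0)$ and $Q_{\mathfrak{n}_j-1}(0)$ via \eqref{eq:Pfrak}--\eqref{eq:Pfrak2} (using positivity of the Hankel determinants of $\{s_j\}_{j\ge0}$ to reduce nonvanishing to $\wh{P}_{\mathfrak{n}_j-1}(0)\ne0$), and part (ii) by a block-by-block comparison of $\mathfrak{U}\mathfrak{L}$ with $J$ in the four cases $(\mathfrak{k}_{j-1},\mathfrak{k}_j)\in\{1,2\}^2$, with the induction for $\mathfrak{l}_j$ driven by the three-term recurrence for $\wh{P}_j$ at $\lambda=0$. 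The point you flag as delicate --- that $\wh{P}_{\mathfrak{n}_j}(0)=0$ at a skipped index forces $b_{\mathfrak{n}_j+1}=0$ and makes the overdetermined $2\times2$ block equations solvable --- is exactly how the paper resolves those cases.
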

\begin{proof}
(i) The normal indices ${\mathfrak n}_j\,\,(j\in\dN)$ of the
sequence ${\mathfrak s}=\{{ s}_{j-1}\}_{j=0}^\infty$ can be
characterized by the conditions
\[
d_j:=\det
\begin{pmatrix}
{s}_{-1}&{s}_0&\dots &{s}_{{\mathfrak
n}_j-2}\\
{s}_{0}&{s}_1&\dots &{s}_{{\mathfrak
n}_j-1}\\
\hdotsfor{4}\\
{s}_{{\mathfrak n}_j-2}&{s}_{{\mathfrak n}_j-1}&\dots
&{s}_{{2\mathfrak n}_j-3}
\end{pmatrix}\ne 0,\quad j\in\dN.
\]

Now the first statement follows by the equalities
\[
    \begin{split}
d_{j} &={s}_{-1} \det\begin{pmatrix}
{s}_{1}&\dots &{s}_{\mathfrak{n}_j-1}\\
\hdotsfor{3}\\
{s}_{\mathfrak{n}_j-1}&\dots &{s}_{2\mathfrak{n}_j-3}
\end{pmatrix}+
\det\begin{pmatrix}
0&{s}_0&\dots &{s}_{\mathfrak{n}_j-2}\\
{s}_{0}&{s}_1&\dots &{s}_{\mathfrak{n}_j-1}\\
\hdotsfor{4}\\
{s}_{\mathfrak{n}_j-1}&{s}_{\mathfrak{n}_j+1}&\dots
&{s}_{2\mathfrak{n}_j-3}
\end{pmatrix}\\
&=(-1)^{{\mathfrak n}_j} \left({s}_{-1}{P}_{{\mathfrak
n}_j-1}(0)-{Q}_{{\mathfrak n}_j-1}(0)\right)
\det({s}_{i+k})_{i,k=0}^{\mathfrak{n}_j-1}\ne 0,
\end{split},
\]

 (ii) Comparing the matrix  
\[
\mathfrak{U}\mathfrak{L}=
\begin{pmatrix}
\mathfrak{U}_0\mathfrak{E}_0 +\mathfrak{D}_0\mathfrak{L}_1&  \mathfrak{D}_0\mathfrak{E}_1  & &\\
& & & \\
\mathfrak{U}_{1}\mathfrak{L}_1   &
\mathfrak{U}_1\mathfrak{E}_1+\mathfrak{D}_1\mathfrak{L}_2
& \mathfrak{D}_1\mathfrak{E}_2 &\\
 & & \\
       &\ddots &\ddots&\ddots\\
\end{pmatrix}
\]
with the matrix ${\mathfrak J}$ in~\eqref{monJac} one finds the
entries of ${\mathfrak L}$ and ${\mathfrak U}$. Let us  consider
four cases.

{\it Case 1.} $\mathfrak{k}_0=\mathfrak{k}_{1}=2$. In this case
$s_{-1}=\wh{{ P}}_{0}(0)=0$, $\wh{{ P}}_{1}(\lambda)\equiv 1$ and
$\wh{{ P}}_{2}(\lambda)=Q_2(\lambda)=\lambda-b_1$.

Since $b_1=-\wh{{ P}}_{2}(0)=0$ the equation
\begin{equation}\label{eq:UE0}
\mathfrak{U}_0\mathfrak{E}_0
+\mathfrak{D}_0\mathfrak{L}_1=\begin{pmatrix}
{\mathfrak e}_0& 1\\
{\mathfrak u}_0&0\\
\end{pmatrix}=
\begin{pmatrix}
b_0& 1\\
{\mathfrak c}_{1}&b_1\\
\end{pmatrix},
\end{equation}
is solvable and it follows from
\begin{equation}\label{eq:UL1}
\mathfrak{U}_{1}\mathfrak{L}_1=
\begin{pmatrix}
0&{\mathfrak l}_{1}\\
0 &0\\
\end{pmatrix}=
\begin{pmatrix} 0 & c_2\\
              0 & 0\end{pmatrix}
\end{equation}
that
\[
{\mathfrak l}_{1}=c_2=-\frac{\wh{{ P}}_{3}(0)}{\wh{{ P}}_{1}(0)}
\]

{\it Case 2.} $\mathfrak{k}_0=2$, $\mathfrak{k}_{1}=1$. In this case
$s_{-1}=\wh{{ P}}_{0}(0)=0$. It follows from the equation
\[
\mathfrak{U}_0\mathfrak{E}_0
+\mathfrak{D}_0\mathfrak{L}_1=\begin{pmatrix}
{\mathfrak e}_0& 1\\
{\mathfrak u}_0& {\mathfrak l}_{1}\\
\end{pmatrix}=
\begin{pmatrix}
b_0& 1\\
{\mathfrak c}_{1}&b_1\\
\end{pmatrix},
\]
that
\[
{\mathfrak l}_{1}=b_1=-\frac{\wh{{ P}}_{2}(0)}{\wh{{ P}}_{1}(0)}.
\]

{\it Case 3.} In the case $\mathfrak{k}_0=1$, $\mathfrak{k}_{1}=2$
one gets $\wh{{ P}}_{0}(0)\ne 0$, $\wh{{ P}}_{1}(0)=0$, and the
equations take the form
\[
\mathfrak{U}_0\mathfrak{E}_0 +\mathfrak{D}_0\mathfrak{L}_1=
{\mathfrak u}_0 = b_0,
\]
\[ 
\mathfrak{U}_{1}\mathfrak{L}_1=
\begin{pmatrix}
{\mathfrak l}_{1}\\
0\\
\end{pmatrix}=
\begin{pmatrix} c_1\\
              0 \end{pmatrix}.
\] 
Hence
\[
{\mathfrak l}_{1}=c_1=-\frac{\wh{{ P}}_{2}(0)}{\wh{{ P}}_{0}(0)}.
\]

{\it Case 4.} In the case $\mathfrak{k}_0=\mathfrak{k}_{1}=1$ one
has $s_{-1}=\wh{{ P}}_{0}(0)\ne 0$, and the equation
\[
\mathfrak{U}_0\mathfrak{E}_0 +\mathfrak{D}_0\mathfrak{L}_1=
{\mathfrak u}_0 +{\mathfrak l}_{1}= b_0,
\]
contains a free parameter ${\mathfrak u}_0$. Setting ${\mathfrak
u}_0=-\frac{1}{s_{-1}}$ one obtains
\[
{\mathfrak l}_{1}=b_0-{\mathfrak u}_0=-\frac{\wh{{ P}}_{1}(0)}{\wh{{
P}}_{0}(0)}.
\]

Assume now that~\eqref{f_UL_ind_1} is satisfied for some $j\in\dN$ and
 consider again four cases.

\noindent 1) $\mathfrak{k}_j=\mathfrak{k}_{j-1}=2$. 
These conditions can be rewritten as follows
\[
\wh{{ P}}_{\mathfrak{n}_{j}-1}(0)\ne 0,\quad \wh{{
P}}_{\mathfrak{n}_{j}+1}(0)\ne 0,\quad \wh{{
P}}_{\mathfrak{n}_{j}}(0)=0, \quad \wh{{
P}}_{\mathfrak{n}_{j}+2}(0)=0.
\]

This implies $b_{\mathfrak{n}_{j}+1}=0$ and hence the equations
\begin{equation}\label{eq:UEj}
\mathfrak{U}_j\mathfrak{E}_j
+\mathfrak{D}_j\mathfrak{L}_{j+1}=\begin{pmatrix}
{\mathfrak e}_j& 1\\
{\mathfrak u}_j&0\\
\end{pmatrix}=
\begin{pmatrix}
b_{\mathfrak{n}_{j}}& 1\\
{c}_{\mathfrak{n}_{j}}& b_{\mathfrak{n}_{j}+1}\\
\end{pmatrix},
\end{equation}
\[
\mathfrak{U}_{1}\mathfrak{L}_1=
\begin{pmatrix}
0&{\mathfrak l}_{{j}+1}\\
0 &0\\
\end{pmatrix}=
\begin{pmatrix} 0 & c_{\mathfrak{n}_{j}+1}\\
              0 & 0\end{pmatrix}
\]
are solvable and
\begin{equation}\label{fTh55_1}
{\mathfrak l}_{j+1}=c_{\mathfrak{n}_{j}+1}=-\frac{\wh{{
P}}_{\mathfrak{n}_{j+2}-1}(0)}{\wh{{ P}}_{\mathfrak{n}_{j+1}-1}(0)}
\end{equation}

\noindent 2) $\mathfrak{k}_{j-1}=2$, $\mathfrak{k}_{j}=1$. In this
case
\[
\wh{{ P}}_{\mathfrak{n}_{j}-1}(0)\ne 0,\quad \wh{{
P}}_{\mathfrak{n}_{j}}(0)= 0,\quad \wh{{
P}}_{\mathfrak{n}_{j}+1}(0)\ne 0, \quad \wh{{
P}}_{\mathfrak{n}_{j}+2}(0)\ne 0.
\] It follows from the equation
\[
\mathfrak{U}_j\mathfrak{E}_j
+\mathfrak{D}_j\mathfrak{L}_{j+1}=\begin{pmatrix}
{\mathfrak e}_j& 1\\
{\mathfrak u}_j& {\mathfrak l}_{j+1}\\
\end{pmatrix}=
\begin{pmatrix}
b_{\mathfrak{n}_{j}}& 1\\
{c}_{\mathfrak{n}_{j}}& b_{\mathfrak{n}_{j}+1}\\
\end{pmatrix},
\]
that
\begin{equation}\label{fTh55_2}
{\mathfrak l}_{j+1}=b_{\mathfrak{n}_{j}+1}=-\frac{\wh{{
P}}_{\mathfrak{n}_{j+2}-1}(0)}{\wh{{ P}}_{\mathfrak{n}_{j+1}-1}(0)}.
\end{equation}

\noindent 3) In the case $\mathfrak{k}_{j-1}=1$,
$\mathfrak{k}_{j}=2$ one obtains
\[ \wh{{ P}}_{\mathfrak{n}_{j}-1}(0)\ne 0,\quad
\wh{{ P}}_{\mathfrak{n}_{j}}(0)\ne 0,\quad \wh{{
P}}_{\mathfrak{n}_{j}+1}(0)= 0, \quad \wh{{
P}}_{\mathfrak{n}_{j}+2}(0)\ne 0.
\] It follows that
\[
\mathfrak{U}_j\mathfrak{E}_j +\mathfrak{D}_j\mathfrak{L}_{j+1}=
{\mathfrak u}_j= b_{\mathfrak{n}_{j}}.
\]
Hence
\begin{equation}\label{fTh55_3}
{\mathfrak l}_{j+1}=c_{\mathfrak{n}_{j}+1}=-\frac{\wh{{
P}}_{\mathfrak{n}_{j+2}-1}(0)}{\wh{{ P}}_{\mathfrak{n}_{j+1}-1}(0)}.
\end{equation}

\noindent 4) In the case $\mathfrak{k}_{j-1}=\mathfrak{k}_{j}=1$ one has
\[
\wh{{ P}}_{\mathfrak{n}_{j}-1}(0)\ne 0,\quad \wh{{
P}}_{\mathfrak{n}_{j}}(0)\ne 0,\quad \wh{{
P}}_{\mathfrak{n}_{j}+1}(0)\ne 0, \quad 
\]
and the equations
\[
\mathfrak{U}_j\mathfrak{L}_j = {\mathfrak u}_j {\mathfrak l}_{j}=
c_{\mathfrak{n}_{j}-1},
\]
\begin{equation}\label{eq:LUj2}
\mathfrak{U}_j\mathfrak{E}_j +\mathfrak{D}_j\mathfrak{L}_{j+1}=
{\mathfrak u}_j +{\mathfrak l}_{j+1}= b_{\mathfrak{n}_{j}},
\end{equation}
yield
\begin{equation}\label{fTh55_4}
\begin{split}
{\mathfrak l}_{j+1}&= b_{\mathfrak{n}_{j}}-{\mathfrak u}_j=
b_{\mathfrak{n}_{j}}-\frac{ c_{\mathfrak{n}_{j}-1}}{{\mathfrak
l}_{j}}\\
&=\frac{b_{\mathfrak{n}_{j}}\wh P_{\mathfrak{n}_{j}}(0)+
c_{\mathfrak{n}_{j}-1}\wh P_{\mathfrak{n}_{j}-1}(0)}{\wh
P_{\mathfrak{n}_{j}}(0)}= -\frac{\wh
P_{\mathfrak{n}_{j+2}-1}(0)}{\wh P_{\mathfrak{n}_{j+1}-1}(0)}.
\end{split}
\end{equation}
This completes the proof of~\eqref{f_UL_ind_1}.
\end{proof}

\begin{rem}
It should be noted that, actually, both the matrices in the
$UL$-decom\-position~\eqref{UL} depend on ${s}_{-1}$, that is,
$\mathfrak{U}=\mathfrak{U}({s}_{-1})$ and
$\mathfrak{L}=\mathfrak{L}({s}_{-1})$.
\end{rem}

\begin{thm}\label{Geron}
Let $J$ be a monic Jacobi matrix associated with $F\in{\mathbf
N}_{-\infty}$, ${ s}_{-1}\in\dR$, and let
$J=\mathfrak{U}\mathfrak{L}$ be the corresponding $UL$
factorization of $J$ of the form~\eqref{eq:ind_LU}--\eqref{eq:ind_U}. Then 
$\mathfrak{J}=\mathfrak{L}\mathfrak{U}$ is the monic generalized
Jacobi matrix associated with
$\frac{F(\lambda)}{\lambda}-\frac{s_{-1}}{\lambda}\in{\bf
D}_{0,-\infty}^-$.
\end{thm}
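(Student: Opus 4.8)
The plan is to reverse the moment computation used in the proof of Theorem~\ref{Geronimus}, which the present statement closely parallels. First I would confirm that $\mathfrak{J}=\mathfrak{L}\mathfrak{U}$ is a monic generalized Jacobi matrix. Multiplying out the block product with $\mathfrak{L}$ and $\mathfrak{U}$ in the forms~\eqref{eq:ind_LU}--\eqref{eq:ind_U} and inserting the entries~\eqref{f_UL_ind_1} supplied by Proposition~\ref{prop:4.3}, one reads off that the diagonal, super-diagonal and sub-diagonal blocks of $\mathfrak{L}\mathfrak{U}$ have exactly the shapes prescribed in Definition~\ref{defGJM}; in particular the data $\mathfrak{p}_i^{(j)}$, $\mathfrak{c}_j$, $\epsilon_j$ of $\mathfrak{J}$, and hence its Gram matrix $G$ from~\eqref{Gram}, are determined by $\mathfrak{L}$ and $\mathfrak{U}$. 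This is the routine part.

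With $\mathfrak{J}$ and $G$ in hand, I would record the two identities used in~\eqref{inCT_1}: the relation $\mathfrak{L}^{\top}_{[0,j]}e_0=e_0$, immediate from the unipotent lower-triangular shape of $\mathfrak{L}$, and $\mathfrak{U}_{[0,j]}G_{[0,j]}e_0=\alpha e_0$ for a scalar $\alpha$ read off the first block of $\mathfrak{U}$ and $G$. Because the matrices $\mathfrak{L}$, $\mathfrak{U}$, $G$ here are literally those appearing in Theorem~\ref{Geronimus}, both identities carry over verbatim.

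The core of the argument is the moment bookkeeping, now run in the opposite direction. By~\eqref{momrec}, for $j$ large one has $\mathfrak{s}_k=(e_0,\mathfrak{J}^{k}_{[0,j-1]}G_{[0,j-1]}e_0)$; I would strip the leftmost $\mathfrak{L}$ using $\mathfrak{L}^{\top}e_0=e_0$, rewrite $\mathfrak{U}(\mathfrak{L}\mathfrak{U})^{k-1}=(\mathfrak{U}\mathfrak{L})^{k-1}\mathfrak{U}$, invoke the factorization $\mathfrak{U}\mathfrak{L}=J$ of Proposition~\ref{prop:4.3}, and finally collapse $\mathfrak{U}Ge_0=\alpha e_0$. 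This gives $\mathfrak{s}_k=\alpha\,(e_0,J^{k-1}_{[0,\mathfrak{n}_{j+1}-1]}e_0)=\alpha\,s_{k-1}$ for $k\in\dN$, while $k=0$ yields $\mathfrak{s}_0=(e_0,Ge_0)$. Thus, up to the normalizing factor $\alpha$, the moments of $\mathfrak{J}$ are the shifted moments $s_{k-1}$, i.e.\ exactly the coefficients in the asymptotic expansion of $\mathfrak{F}(\lambda)=\frac{F(\lambda)}{\lambda}-\frac{s_{-1}}{\lambda}$. To place $\mathfrak{F}$ in the asserted class I would check the single relation tying it to $F$, namely $\lambda\mathfrak{F}(\lambda)+\mathfrak{s}_0=F(\lambda)\in{\bf N}_{-\infty}$ (the same relation as in Theorem~\ref{Geronimus}); together with~\eqref{momrec} this identifies $\mathfrak{J}$ as the monic generalized Jacobi matrix associated with $\mathfrak{F}$.

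I expect the main obstacle to lie in the first-block bookkeeping: pinning down $\alpha$ and matching the $k=0$ value $\mathfrak{s}_0=(e_0,Ge_0)$ with the free parameter $s_{-1}$. Here the dichotomy already visible in Proposition~\ref{prop:4.3}—$\mathfrak{k}_0=1$ with $s_{-1}\ne 0$ versus $\mathfrak{k}_0=2$ with $s_{-1}=0$, the latter forcing $\mathfrak{s}_0=0$ through the form of $G_0$—must be tracked with care, as must the passage from the truncated products to the full matrices, which requires $j$ chosen so that $J_{[0,\mathfrak{n}_{j+1}-1]}$ already reproduces $s_0,\dots,s_{k-1}$. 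The analytic content is by contrast light, with membership in the asserted class reducing to the single identity $\lambda\mathfrak{F}+\mathfrak{s}_0=F$.
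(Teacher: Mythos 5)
Your proposal follows exactly the route the paper takes: the paper's own proof of Theorem~\ref{Geron} consists only of the remark that $\mathfrak{J}=\mathfrak{L}\mathfrak{U}$ is a monic generalized Jacobi matrix by straightforward calculation and that the rest is obtained by reversing the reasoning of Theorem~\ref{Geronimus}, which is precisely the moment bookkeeping (via $\mathfrak{L}^{\top}_{[0,j]}e_0=e_0$ and $\mathfrak{U}_{[0,j]}G_{[0,j]}e_0=\alpha e_0$) that you carry out. Your closing check $\lambda\mathfrak{F}(\lambda)+\mathfrak{s}_0=F(\lambda)\in{\bf N}_{-\infty}$ is the correct membership condition --- it places $\mathfrak{F}$ in the class ${\bf D}_{-\infty}^+$ of Definition~\ref{deffunct+}, the label ${\bf D}_{0,-\infty}^-$ in the theorem statement being an apparent misprint --- so your write-up is a faithful, more detailed version of the paper's argument.
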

\begin{proof}
The fact that $\mathfrak{J}$ is the monic generalized
Jacobi matrix can be easily verified by straightforward calculations.
The rest of the proof can be done by reversing the reasoning given
in the proof of Theorem~\ref{Geronimus}.
\end{proof}

The transform $J=\mathfrak{U}\mathfrak{L}\mapsto
\mathfrak{J}=\mathfrak{L}\mathfrak{U}$ in Theorem~\ref{Geron} is
called the {\it Geronimus transform} of the Jacobi matrix $J$ with
the parameter $s_{-1}$.
\subsection{Jacobi matrices associated with generalized Stieltjes functions}
Let $\kappa$ be a nonnegative integer. Remind that a function
$F$, meromorphic in $\dC\setminus\dR$, is said to belong to the
class ${\mathbf N}_\kappa$ if the domain of holomorphy $\rho(F)$ of
the function $F$ is symmetric with respect to $\dR$,
$F(\bar\lambda)=\overline{F(\lambda)}$ for $\lambda\in\rho(F)$, and
the kernel
\[
\begin{cases}
       {\sf N}_{F}(\lambda,\omega)=
\frac{F(\lambda)-\overline{F(\omega)}}{\lambda-\overline{\omega}},& \text{$\lambda,\omega\in\rho(F)$};\\
  {\sf N}_{F}(\lambda,\overline{\lambda})=F'(\lambda),& \text{$\lambda\in\rho(F)$}\\
  \end{cases}
\]
has  $\kappa$ negative squares on  $\rho(F)$. The last statement
means that for every $n\in\dN$ and
$\lambda_{1},\lambda_{2},\dots,\lambda_{n}\in\rho(F)$, $n\times n$
matrix $({\sf N}_{F}(\lambda_{i},\lambda_{j}))_{i,j=1}^n$ has at
most $\kappa $ negative eigenvalues (with account of multiplicities)
and for some choice of $n$,
$\lambda_{1},\lambda_{2},\dots,\lambda_{n}$ it has exactly $\kappa$
negative eigenvalues (see~\cite{KL79}). Clearly, ${\mathbf
N}_0={\mathbf N}$.
\begin{defn}\label{deffunctSpm}
Let us say that a function $F$ holomorphic in $\dC_+$ belongs to the
generalized Stieltjes class ${\bf S}^{\pm\kappa}$  if
$F(\lambda)\in{\bf N}$ and $\lambda^{\pm 1} F(\lambda)\in{\bf
N}_{\kk}$. Let us set
\[
{\bf S}^{\pm\kappa}_{-\infty}={\bf S}^{\pm\kappa}\cap {\bf
N}_{-\infty}.
\]
\end{defn}
According to~\cite[Theorem 2.4]{DM97} any function $F\in{\bf
S}^{\kappa}_{-\infty}$ admits the integral representation
\begin{equation}\label{eq:4.24}
    F(\lambda)=\sum_{j=1}^{p}\frac{A_j}{t_j-\lambda}+\int_0^\infty\frac{d\sigma(t)}{t-\lambda},
\end{equation}
where $A_j>0$ ($j=1,\dots,p$), $p=\kappa$ and $\sigma$ is a finite
measure on $[0,+\infty)$, such that
\begin{equation}\label{eq:4.25}
    \int_0^\infty t^{2n}d\sigma(t)<\infty \mbox{ for all }n\in\dN.
\end{equation}

Similarly, any function $F\in{\bf S}^{-\kappa}_{-\infty}$ admits the
integral representation~\eqref{eq:4.24}, where  $A_j>0$
($j=1,\dots,p$), $\sigma$ is a finite measure on $[0,+\infty)$,
which satisfies~\eqref{eq:4.25} and
\begin{equation}\label{eq:4.24A}
p=\left\{\begin{array}{cl}
           \kappa-1, & \mbox{ if }0< F(0-)\le \infty; \\
           \kappa, & \mbox{ if }F(0-)\le 0.\\
         \end{array}
         \right.
\end{equation}
This implies, in particular, that every function $F\in{\bf
S}^{-\kappa}_{-\infty}$ belongs either to ${\bf
S}^{\kappa}_{-\infty}$ or to ${\bf S}^{\kappa+1}_{-\infty}$.

\begin{cor}
Let $F\in{\bf S}^{\kappa}_{-\infty}$ have the asymptotic
expansion~\eqref{asymp}, let $J$ be a monic Jacobi matrix associated
with $F$, let $\{{\mathfrak n}_j\}_{j=1}^\infty$ be the set of
normal indices of the sequence ${\mathfrak s}
=\{{ s}_{j+1}\}_{j=0}^\infty$, and let $J=LU$
be its $LU$-factorization of the form~\eqref{LU}-\eqref{eq:LU}.
Then:
\begin{enumerate}
    \item[(i)] the sequence
${\mathfrak k}_j:={\mathfrak n}_{j+1}-{\mathfrak n}_{j}$,
$j\in\dZ_+$ is stabilized, i.e. there is $N>0$ such that
${\mathfrak k}_j=1$ for $j\ge N$;
    \item[(ii)] the matrix $\mathfrak{J}=UL$ is the monic generalized
Jacobi matrix of the form
\[
\mathfrak{J}=\left(%
\begin{array}{cc}
  \mathfrak{J}_{[0,N]} &  \mathfrak{J}_{12}  \\
  \mathfrak{J}_{21} &  \mathfrak{J}_{[N+1,\infty)} \\
\end{array}%
\right),\quad 
\mathfrak{J}_{12}=\left(%
\begin{array}{cc}
   & {\bf 0} \\
  1 &  \\
\end{array}%
\right),\,\,
\mathfrak{J}_{21}=\left(%
\begin{array}{cc}
   & c_N \\
  {\bf 0} &  \\
\end{array}%
\right),
\]
$\mathfrak{J}_{[N+1,\infty)}$ is a monic Jacobi matrix,
$\mathfrak{J}_{[0,N]}$ is a monic generalized Jacobi matrix of the
form~\eqref{mJacobi};
   \item[(iii)] the matrix $\mathfrak{J}=UL$ is
associated with $\lambda F(\lambda)+s_0\in{\bf D}_{-\infty}^-$.
 \end{enumerate}
\end{cor}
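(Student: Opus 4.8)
Statement (iii) is immediate. Since ${\bf S}^{\kappa}_{-\infty}\subset{\bf N}_{-\infty}$ and $J=LU$ is the (unique) $LU$-factorization of Proposition~\ref{prop31}, Theorem~\ref{Chris} applies verbatim and yields that $\mathfrak{J}=UL$ is the monic generalized Jacobi matrix associated with $\lambda F(\lambda)+s_0\in{\bf D}_{-\infty}^-$. Thus the content lies in the stabilization (i), after which (ii) is a matter of reading off blocks. My plan for (i) is to control the negative inertia of the shifted Hankel matrices $H_n:=(\mathfrak{s}_{i+k})_{i,k=0}^{n-1}=(s_{i+k+1})_{i,k=0}^{n-1}$. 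These are exactly the Hankel matrices of the signed measure $d\tau(t)=t\,d\mu(t)$, where $d\mu=\sum_{j=1}^{\kappa}A_j\,d\delta_{t_j}+d\sigma$ is the representing measure of $F$ from~\eqref{eq:4.24}. The requirement $\lambda F\in{\bf N}_\kappa$ forces the poles $t_j$ to be negative, so each point mass contributes a rank-one \emph{negative} semidefinite term $t_jA_j\,v_jv_j^{\top}$ with $v_j=(1,t_j,\dots,t_j^{n-1})^{\top}$, while the nonrationality of $F$ makes $\sigma$ (hence $t\,d\sigma\ge 0$) have infinitely many growth points on $(0,\infty)$, so the corresponding Hankel matrix $H_n^{(\sigma)}$ is positive definite. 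Consequently $H_n=H_n^{(\sigma)}-R_n$ with $R_n\succeq 0$ of rank at most $\kappa$, whence $H_n$ has at most $\kappa$ negative eigenvalues; and by Cauchy interlacing of leading principal submatrices, $\nu_-(H_n)$ (the number of negative eigenvalues) is nondecreasing in $n$. Therefore $\nu_-(H_n)$ stabilizes at some value $\le\kappa$.

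To convert this into stabilization of the $\mathfrak{k}_j$, recall that the normal indices $\mathfrak{n}_j$ are precisely the $n$ with $\det H_n\ne 0$, and by Theorem~\ref{GJM} every gap has length $\mathfrak{k}_j\le 2$. A gap $\mathfrak{k}_j=2$ means $H_{\mathfrak{n}_j+1}$ is singular with nonsingular neighbours; the Frobenius determinant identity for such an isolated singular Hankel minor gives $\det H_{\mathfrak{n}_j}\cdot\det H_{\mathfrak{n}_{j+1}}<0$, so that $\nu_-$ increases by \emph{exactly one} across the gap (its increment is odd and, by interlacing, at most two). Since the total increase of $\nu_-$ is bounded by $\kappa$, there can be at most $\kappa$ indices $j$ with $\mathfrak{k}_j=2$; hence there is $N$ with $\mathfrak{k}_j=1$ for all $j\ge N$, which is statement (i). Equivalently, this count is the negative index of the Gram space $\ell^2_{[0,\infty)}(G)$ of~\eqref{Gram}, namely $\#\{j:\mathfrak{k}_j=2\}+\#\{j:\mathfrak{k}_j=1,\ \epsilon_j=-1\}=\kappa$, which simultaneously shows that $\epsilon_j=-1$ occurs only finitely often.

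For (ii), enlarge $N$ if necessary so that $\mathfrak{k}_j=1$ and $\epsilon_j=1$ for all $j\ge N$ (possible since $\nu_-(H_n)$ is eventually constant, so the signs $\mathrm{sign}\det H_{\mathfrak{n}_j}$ stabilize and no further sign change can occur in the tail). For such $j$ the blocks of Definition~\ref{defGJM} are all scalar: the diagonal entries are the real numbers $-\mathfrak{p}_0^{(j)}$, the superdiagonal entries are $\mathfrak{D}_j=(1)$, and the subdiagonal entries are $\mathfrak{c}_j=\epsilon_j\epsilon_{j+1}\mathfrak{b}_j^2=\mathfrak{b}_j^2>0$. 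Hence the tail $\mathfrak{J}_{[N+1,\infty)}$ is literally a monic Jacobi matrix of the form~\eqref{monJac}, while $\mathfrak{J}_{[0,N]}$ is a (finite) monic generalized Jacobi matrix of the form~\eqref{mJacobi}. The coupling between the two is carried by the single junction blocks $\mathfrak{D}_N$ and $\mathfrak{C}_N$, whose only nonzero entries are $1$ and $\mathfrak{c}_N=c_N$ respectively, sitting in the corner positions dictated by Definition~\ref{defGJM}; this is exactly the displayed form of $\mathfrak{J}_{12}$ and $\mathfrak{J}_{21}$.

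The main obstacle is the inertia bookkeeping in (i): one must verify carefully that each quadratic block ($\mathfrak{k}_j=2$) and each sign $\epsilon_j=-1$ contributes precisely one negative square, so that their total number equals the finite index $\kappa$. This is where the Frobenius-type determinant identity across a size-two gap (equivalently, the explicit inertia of the Gram blocks $G_j$ in~\eqref{Gram}) does the decisive work; the remaining verifications are the routine block identifications already used in the proofs of Proposition~\ref{prop31} and Theorem~\ref{Chris}.
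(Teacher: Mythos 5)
Your argument is correct. Note that the paper states this corollary without proof, so there is nothing to compare line by line; the intended justification is evidently that (iii) is Theorem~\ref{Chris} applied verbatim (as you say), and that (i) follows from the Kre\u{\i}n--Langer fact that the moment sequence of the ${\bf N}_\kappa$-function $\lambda F(\lambda)+s_0$ has Hankel matrices with eventually exactly $\kappa$ negative eigenvalues, so that the indefinite inner product~\eqref{metric} has finite negative index and only finitely many blocks $G_j$ of~\eqref{Gram} can be indefinite. You reconstruct exactly this mechanism, but you derive the bound $\nu_-(H_n)\le\kappa$ directly from the integral representation~\eqref{eq:4.24} (splitting $H_n$ into a positive-definite Hankel matrix of $t\,d\sigma$ minus a rank-$\le\kappa$ positive semidefinite part) rather than citing [KL79]; that makes the count self-contained, at the modest cost of invoking the Frobenius sign rule $\det H_{\mathfrak{n}_j}\det H_{\mathfrak{n}_{j+1}}<0$ across an isolated singular Hankel minor. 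Your observation that $N$ must be enlarged so that also $\epsilon_j=1$ in the tail (otherwise $\mathfrak{c}_j=\epsilon_j\epsilon_{j+1}\mathfrak{b}_j^2$ need not be positive and $\mathfrak{J}_{[N+1,\infty)}$ would not be of the form~\eqref{monJac}) is a point the corollary's statement glosses over, and it is handled correctly by the stabilization of $\nu_-(H_n)$. I see no gap.
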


\section{Generalized Cholesky decomposition}

Recall that the Cholesky decomposition is a decomposition of a symmetric positive-definite real matrix $A$ as the product of a lower triangular matrix
$L$ and its transpose $L^{\top}$, that is $A=LL^{\top}$. The decomposition can be also rewritten in the form
$A=L\Lambda L^{\top}$, where $\Lambda$ is a diagonal matrix and all the diagonal entries of $L$ are
equal to $1$. In this section, we give a generalization of the latter decomposition
to  an arbitrary symmetric Jacobi matrix $J_s$.

At first, note that every monic Jacobi matrix $J$ can be reduced to a symmetric Jacobi matrix via the following transformation
\begin{equation}\label{symtomon}
J_s=\Psi^{-1} J \Psi,
\end{equation}
where $\Psi=\diag(1,\sqrt{c_0}, \sqrt{c_0c_1},\sqrt{c_0c_1c_2}, \dots )$; here the square root is chosen to be positive.
In fact, every symmetric Jacobi matrix can be represented in such a way.
So, now we can reformulate Proposition~\ref{prop31} in an appropriate way.
\begin{prop}
Let $J_s$ be a symmetric Jacobi matrix associated with $F\in{\mathbf
N}_{-\infty}$ and let ${\mathfrak k}_j:={\mathfrak
n}_{j+1}-{\mathfrak n}_{j}$,  $j\in\dZ_+$, where $\mathfrak{n}_0=0$
and $\{{\mathfrak n}_j\}_{j=1}^\infty$ is the set of normal indices
of the sequence ${\mathfrak s}=\{{\mathfrak s}_{j}\}_{j=0}^\infty$
defined by the asymptotic expansion~\eqref{asympF} of the function
$\lambda F(\lambda)+s_0$. Then $J$
admits the following generalized Cholesky decomposition
\begin{equation}\label{Cholesky}
J_s=L\Lambda L^{\top},
\end{equation}
where $L$ is a   block lower triangular matrix having the form
\[
    L=\begin{pmatrix}
I_{\mathfrak{k}_{0}} & 0&       &\\
\widehat{L}_{1}   & I_{\mathfrak{k}_{1}}   & 0 &\\
        &\widehat{L}_2    & I_{\mathfrak{k}_{2}} &\ddots\\
&       &\ddots &\ddots\\
\end{pmatrix},
\]
 in which the sub-diagonal of $L$ consists of
$\mathfrak{k}_{j+1}\times\mathfrak{k}_{j}$ matrices
\[
\widehat{L}_{j+1}=
\begin{pmatrix}
\hat{l}_{j+1}&0\\
0&0\\
\end{pmatrix},\quad
\begin{pmatrix}
\hat{l}_{j+1}\\
0\\
\end{pmatrix},\quad
\begin{pmatrix}
\hat{l}_{j+1}&0\\
\end{pmatrix},\quad
(\hat{l}_{j+1}),
\]
and $\Lambda=\diag(\Lambda_0,\Lambda_1,\dots )$ is a block diagonal matrix with the entries
\[
{\Lambda}_j=\left\{\begin{array}{cc}
             {\Lambda}_0^{(j)}, & \mbox{if }{\mathfrak k}_j=1; \\
             \begin{pmatrix} 0 & {\Lambda}_0^{(j)}\\
              {\Lambda}_0^{(j)}& {\Lambda}_1^{(j)}\end{pmatrix} & \mbox{if }{\mathfrak k}_j=2,
           \end{array}\right.
\]
 Moreover, the factorization~\eqref{Cholesky} is unique.
\end{prop}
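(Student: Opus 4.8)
The plan is to derive \eqref{Cholesky} directly from the block $LU$-factorization already established in Proposition~\ref{prop31}, by symmetrizing it through the diagonal matrix $\Psi$. Starting from \eqref{symtomon} and the factorization $J=LU$ of \eqref{LU}, I would write
\[
J_s=\Psi^{-1}J\Psi=(\Psi^{-1}L\Psi)(\Psi^{-1}U\Psi)=:\widetilde{L}\,\widetilde{U}.
\]
Since $\Psi$ is diagonal with positive entries, conjugation by $\Psi$ preserves all the block-triangular shapes and the sparsity patterns in \eqref{eq:L}--\eqref{eq:DJ}: thus $\widetilde{L}$ is again block lower triangular with identity diagonal blocks and sub-diagonal blocks of the shape prescribed for $\widehat{L}_{j+1}$, while $\widetilde{U}$ is block upper triangular with diagonal blocks $\widetilde{U}_j=\Psi_j^{-1}U_j\Psi_j$, where $\Psi_j$ denotes the $\mathfrak{k}_j\times\mathfrak{k}_j$ diagonal block of $\Psi$. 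I would then simply set $L:=\widetilde{L}$; this already has the stated form, with $\hat{l}_{j+1}=l_{j+1}\cdot\psi_{\mathfrak{n}_j}/\psi_{\mathfrak{n}_{j+1}}$.

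Next I would define $\Lambda:=\widetilde{U}\,(\widetilde{L}^{\top})^{-1}$, so that $L\Lambda L^{\top}=\widetilde{L}\widetilde{U}(\widetilde{L}^{\top})^{-1}\widetilde{L}^{\top}=\widetilde{L}\widetilde{U}=J_s$ holds automatically; the whole content is then to show that $\Lambda$ has the asserted block-diagonal form. Here I would use two complementary observations. First, $\Lambda=\widetilde{L}^{-1}J_s(\widetilde{L}^{\top})^{-1}$ is symmetric because $J_s$ is. Second, $\Lambda=\widetilde{U}(\widetilde{L}^{\top})^{-1}$ is a product of two block upper triangular matrices, hence block upper triangular. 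A symmetric block upper triangular matrix must be block diagonal with symmetric diagonal blocks, which yields the block-diagonal structure of $\Lambda$ at once.

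To identify the entries of $\Lambda_j$, I would use that conjugation by a diagonal matrix leaves diagonal entries unchanged, so the $(1,1)$ and $(2,2)$ entries of $\widetilde{U}_j$ equal those of $U_j$ in \eqref{eq:U}, namely $0$ and $u_1^{(j)}=b_{\mathfrak{n}_j+1}$ for $\mathfrak{k}_j=2$. Since the $(j,j)$ block of $\Lambda$ is exactly $\widetilde{U}_j$, we get $\Lambda_j=\begin{pmatrix}0 & \Lambda_0^{(j)}\\ \Lambda_0^{(j)} & \Lambda_1^{(j)}\end{pmatrix}$ with $\Lambda_1^{(j)}=b_{\mathfrak{n}_j+1}$, the two off-diagonal entries being forced equal by the symmetry established above; their common value works out to $\Lambda_0^{(j)}=\sqrt{c_{\mathfrak{n}_j}}$, and the case $\mathfrak{k}_j=1$ gives the scalar $\Lambda_j=u_0^{(j)}$. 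The main obstacle, and the only genuinely non-formal point, is precisely the symmetry of $\Lambda$: the diagonal blocks $\widetilde{U}_j$ become symmetric only because of the special value $u_0^{(j)}=c_{\mathfrak{n}_j}$ produced in \eqref{fTh52_1}--\eqref{fTh52_2}, combined with the fact that $\Psi=\diag(1,\sqrt{c_0},\sqrt{c_0c_1},\dots)$ is tuned so that $\psi_{\mathfrak{n}_j+1}/\psi_{\mathfrak{n}_j}=\sqrt{c_{\mathfrak{n}_j}}$ makes $\Psi_j^{-1}U_j\Psi_j$ have equal off-diagonal entries; I would check this identity explicitly as the crux of the argument.

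Finally, for uniqueness I would argue as for the classical $LDL^{\top}$ decomposition. Suppose $J_s=L\Lambda L^{\top}=L'\Lambda'L'^{\top}$ with both factors of the prescribed form, and set $M=(L')^{-1}L$, which is block lower triangular with identity diagonal blocks. Then $M\Lambda M^{\top}=\Lambda'$, so $\Lambda M^{\top}=M^{-1}\Lambda'$; the left-hand side is block upper triangular and the right-hand side block lower triangular, so both are block diagonal. Each $\Lambda_j$ is invertible (indeed $\det\Lambda_j=-c_{\mathfrak{n}_j}\ne0$ when $\mathfrak{k}_j=2$, and $\Lambda_j=u_0^{(j)}\ne0$ when $\mathfrak{k}_j=1$), whence $\Lambda$ is invertible and $M^{\top}=\Lambda^{-1}(\Lambda M^{\top})$ is block diagonal; being simultaneously block upper triangular with identity diagonal blocks, it must equal the identity, so $L=L'$ and then $\Lambda=\Lambda'$.
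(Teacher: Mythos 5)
Your proof is correct and follows essentially the same route as the paper: both symmetrize the block $LU$-factorization of Proposition~\ref{prop31} via conjugation by $\Psi$ and then extract the block-diagonal middle factor from the symmetry of $J_s$ (the paper reads off $\widehat{D}_j=\widehat{\Lambda}_j\widehat{L}_{j+1}^{\top}$ entrywise, while you package the same fact as ``$\widetilde{U}(\widetilde{L}^{\top})^{-1}$ is symmetric and block upper triangular, hence block diagonal''). Your explicit uniqueness argument via $M=(L')^{-1}L$ is a welcome addition, since the paper asserts uniqueness without proof.
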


\begin{proof}
Making use of~\eqref{LU} and~\eqref{symtomon} gives
\begin{equation}\label{Chol1}
 J_s=\Psi^{-1} LU \Psi.
\end{equation}
Further, let $\Psi=\diag(\Psi_0,\Psi_1,\Psi_2,\dots)$ be  a partition corresponding to the one of $L$. Then~\eqref{Chol1}
can be rewritten as follows
\begin{equation}\label{Chol2}
\begin{pmatrix}
b_0 &\sqrt{c_0}&       &\\
\sqrt{c_0}&b_{1}    &{\sqrt{c_1}}&\\
        &\sqrt{c_1}&{b}_{2} &\ddots\\
&       &\ddots &\ddots\\
\end{pmatrix}=
\begin{pmatrix}
\widehat{\Lambda}_0 & \widehat{D}_0&       &\\
\widehat{L}_{1}\widehat{\Lambda}_0   & \widehat{L}_1\widehat{D}_0+\widehat{\Lambda}_1   & \widehat{D}_1 &\\
        &\widehat{L}_2\widehat{\Lambda}_1    & \widehat{L}_2\widehat{D}_0+\widehat{\Lambda}_2 &\ddots\\
&       &\ddots &\ddots\\
\end{pmatrix},
\end{equation}
where
\[
\widehat{\Lambda}_j=\Psi_{j}^{-1}U_{j}\Psi_{j},\quad
\widehat{D}_j=\Psi_{j}^{-1}D_{j}\Psi_{j+1},\quad
\widehat{L}_{j+1}=\Psi_{j+1}^{-1}L_{j+1}\Psi_{j},\quad j\in\dZ_+.
\]
Since the matrices in~\eqref{Chol2} are symmetric we have that $\widehat{\Lambda}_{j}^{\top}=\widehat{\Lambda}_{j}$ and
$\widehat{D}_j=(\widehat{L}_{j+1}\widehat{\Lambda}_{j})^{\top} =\widehat{\Lambda}_j\widehat{L}_{j+1}^{\top}$ for $j\in\dZ_+$.
This observation and the representation~\eqref{Chol2} yield~\eqref{Cholesky}.
\end{proof}

\section{Convergence of Pad\'e approximants for ${\bf D}_{-\infty}^\pm$ functions}

In this section we restrict our consideration to the case of probability measures supported
on $[-1,1]$. As a consequence, the corresponding monic classical Jacobi matrices will be bounded.
To specify the results of this section, let us recall the notion of Pad\'e approximants.
Suppose we are given a formal power series
$F(\lambda)=\displaystyle{-\sum\limits_{j=0}^{\infty}\frac{s_{j}}{{\lambda}^{j+1}}}$
with $s_j\in\dR$. Let $L$, $M$ be
positive integers. Then  an $[L/M]$ Pad\'e approximant for $F$ is defined
as a ratio
\[
F^{[L/M]}(\lambda)=\frac{A^{[L/M]}\left(\frac{1}{\lambda}\right)}
{B^{[L/M]}\left(\frac{1}{\lambda}\right)}
\]
of polynomials $A^{[L/M]}$, $B^{[L/M]}$ of formal degree $L$ and
$M$, respectively, such that $B^{[L/M]}(0)\ne 0$ and
\begin{equation}\label{eq:1.1}
    \sum_{j=0}^{L+M-1}\frac{s_{j}}{{\lambda}^{j+1}}+F^{[L/M]}(\lambda)= O\left(\frac{1}{{\lambda}^{L+M+1}}\right),\quad \lambda\to\infty.
\end{equation}
More information about Pad\'e approximants can be found in~\cite{Baker}.
In what follows $\widehat{\dC}$ is considered as $\dC\cup\{\infty\}$ equipped with
the spherical metric.

\subsection{Pad\'e approximants for ${\bf D}_{-\infty}^-$ functions}

In a previous work we proved the following result.
\begin{thm}[\cite{DD09}]\label{PA_Dminus}
Let $\sigma$ be a finite nonnegative measure on $E=[-1,\alpha]\cup[\beta,1]$,
$0\in[\alpha,\beta]$, and let
\begin{equation}\label{PartCase2}
    {\mathfrak F}(\lambda)=\int_E\frac{td\sigma(t)}{t-\lambda}.
\end{equation}
Then ${\mathfrak F}\in{\bf D}_{-\infty}^-$ and the sequence $\{{\mathfrak
F}^{[\mathfrak{n}_j/\mathfrak{n}_j]}\}_{j=0}^{\infty}$ converges to ${\mathfrak F}$ locally uniformly in
$\widehat{\dC}\setminus ([-1-\varepsilon,\alpha]\cup[\beta, 1+\varepsilon])$
for some $ \varepsilon\ge0$ if and only if the
condition
\begin{equation}\label{Pcond}
\sup\limits_{j\in\dZ_+}\left|\frac{P_{\mathfrak{n}_j+1}(0)}{P_{\mathfrak{n}_j}(0)}\right|<\infty
\end{equation}
is fulfilled for the polynomials $P_j$ orthogonal with respect to $\sigma$.
\end{thm}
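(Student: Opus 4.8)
The plan is to pass from the analytic statement about Padé approximants to a spectral statement about the truncations of the Christoffel transform $\mathfrak{J}=UL$, and then to recognize \eqref{Pcond} as the boundedness of the factorization $J=LU$.

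\emph{Setting up the Christoffel picture.} First I would check $\mathfrak{F}\in{\bf D}_{-\infty}^-$ directly: splitting $t=(t-\lambda)+\lambda$ in \eqref{PartCase2} gives $\mathfrak{F}(\lambda)=s_0+\lambda F(\lambda)$ with $F(\lambda)=\int_E\frac{d\sigma(t)}{t-\lambda}\in{\bf N}_{-\infty}$, so $-\frac{s_0}{\lambda}+\frac{\mathfrak{F}(\lambda)}{\lambda}=F(\lambda)\in{\bf N}_{-\infty}$ and Definition~\ref{deffunct-} applies with $\mathfrak{s}_{-1}=s_0\ne0$ and $\mathfrak{s}_j=s_{j+1}$. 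Thus $\mathfrak{F}$ is exactly the function of the Christoffel transform in Theorem~\ref{Chris}: its monic generalized Jacobi matrix is $\mathfrak{J}=UL$, where $J=LU$ is the monic Jacobi matrix of $\sigma$ and the factorization entries are given by \eqref{f_LU_def}. Since $\supp\sigma\subset[-1,1]$, the matrix $J$ is bounded, so all $b_n$ and $c_n$ are bounded.

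\emph{Poles as truncated eigenvalues.} Next I would identify $\mathfrak{F}^{[\mathfrak{n}_j/\mathfrak{n}_j]}$ with the truncated $m$-function. By \eqref{mQP}, $\mathfrak{m}_{[0,j-1]}=-\mathfrak{Q}_j/\mathfrak{P}_j$, and by \eqref{Qasympt+1} it matches $\mathfrak{F}$ at $\infty$ to order $2\mathfrak{n}_j+\mathfrak{k}_j\ge 2\mathfrak{n}_j$; since $\deg\mathfrak{P}_j=\mathfrak{n}_j$ and $\deg\mathfrak{Q}_j\le\mathfrak{n}_j$, normality of the index $\mathfrak{n}_j$ and uniqueness of the Padé approximant force $\mathfrak{F}^{[\mathfrak{n}_j/\mathfrak{n}_j]}=\mathfrak{m}_{[0,j-1]}$. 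Consequently the poles of the $j$-th approximant are the zeros of $\mathfrak{P}_j$, i.e.\ the eigenvalues of the truncation $\mathfrak{J}_{[0,j-1]}$ (Remark~\ref{ResSet}). Locating these eigenvalues is therefore the whole game.

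\emph{Reading off \eqref{Pcond}.} I would then show that \eqref{Pcond} is precisely the boundedness of the factorization. From \eqref{f_LU_def}, $u_0^{(j)}=-P_{\mathfrak{n}_{j+1}}(0)/P_{\mathfrak{n}_j}(0)$. When $\mathfrak{k}_j=1$ one has $\mathfrak{n}_{j+1}=\mathfrak{n}_j+1$, so $|u_0^{(j)}|=|P_{\mathfrak{n}_j+1}(0)/P_{\mathfrak{n}_j}(0)|$ is exactly a term of \eqref{Pcond}; when $\mathfrak{k}_j=2$ the proof of Proposition~\ref{prop31} (Case~1) gives $u_0^{(j)}=c_{\mathfrak{n}_j}$, which is bounded because $J$ is bounded, while the corresponding term $P_{\mathfrak{n}_j+1}(0)/P_{\mathfrak{n}_j}(0)$ of \eqref{Pcond} vanishes. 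Hence \eqref{Pcond} holds if and only if $\sup_j|u_0^{(j)}|<\infty$; together with $u_1^{(j)}=b_{\mathfrak{n}_j+1}$ and the relations for $l_{j+1}$ in Proposition~\ref{prop31}, this is equivalent to the boundedness of $L$ and $U$, hence of $\mathfrak{J}=UL$ as an operator on $\ell^2$.

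\emph{From boundedness to convergence, and the obstacle.} Finally I would close the loop $\mathfrak{J}$ bounded $\Leftrightarrow$ convergence. If $\mathfrak{J}$ is bounded with $\|\mathfrak{J}\|\le R$, then every eigenvalue of every truncation satisfies $|\lambda|\le R$, so the poles of all approximants stay in a fixed disk; combined with the reality and two-interval clustering of the zeros of $\mathfrak{P}_j$ forced by the signed measure $t\,d\sigma$ (negative on $[-1,\alpha]$, positive on $[\beta,1]$), the poles are confined to a fixed set $[-1-\varepsilon,\alpha]\cup[\beta,1+\varepsilon]$, and a Stieltjes--Vitali normal-families argument applied to the error estimate \eqref{Qasympt+1} yields locally uniform convergence off that set. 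Conversely, if \eqref{Pcond} fails then $\sup_j|u_0^{(j)}|=\infty$, $\mathfrak{J}$ is unbounded, a subsequence of poles of the approximants tends to $\infty$, and these spurious poles accumulate at $\infty$ where $\mathfrak{F}$ is holomorphic, destroying convergence. The main obstacle is this forward analytic step: because $\mathfrak{J}$ is selfadjoint only in the \emph{indefinite} inner product \eqref{metric} (the underlying space is a Pontryagin or Krein space, not $\ell^2$), bounded entries do not by themselves pin the eigenvalues to the real axis, so one must use the two-interval structure together with the operator theory in the indefinite metric to show that the only mechanism by which a pole can reach a neighbourhood of $\infty$ is the unboundedness recorded by \eqref{Pcond}.
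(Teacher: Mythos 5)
First, note that the paper itself gives no proof of Theorem~\ref{PA_Dminus}: it is imported verbatim from~\cite{DD09} (``In a previous work we proved the following result''), so there is no in-paper argument to compare yours against line by line. The closest internal model for how the proof actually goes is Proposition~\ref{quasipol} together with Theorem~\ref{PA_Dplus}, which treat the companion ${\bf D}_{-\infty}^+$ case. Measured against that, your reductions are sound as far as they go: the identity ${\mathfrak F}=\lambda F+s_0$ and the membership ${\mathfrak F}\in{\bf D}_{-\infty}^-$ are correct; the identification ${\mathfrak F}^{[\mathfrak{n}_j/\mathfrak{n}_j]}=\mathfrak{m}_{[0,j-1]}=-\mathfrak{Q}_j/\mathfrak{P}_j$ is justified by \eqref{Qasympt+1} and normality; and the observation that \eqref{Pcond} is equivalent to $\sup_j|u_0^{(j)}|<\infty$ (hence to boundedness of $U$, given that $J$ is bounded) is exactly the content the paper later extracts in Theorem~\ref{DarbouxPade}.

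The genuine gap is the final analytic step, which you yourself flag as ``the main obstacle'' and then do not carry out. Your route forces you to locate the eigenvalues of the truncations $\mathfrak{J}_{[0,j-1]}$ of the generalized Jacobi matrix, which is selfadjoint only with respect to the indefinite form \eqref{metric}; bounded entries then give you neither reality of the eigenvalues nor, in the converse direction, any lower bound showing that a subsequence of poles escapes to $\infty$ when \eqref{Pcond} fails (your appeal to ``the two-interval structure'' is a wish, not an argument). The way this is actually resolved --- visible in the proof of Proposition~\ref{quasipol} via \eqref{modPA_repr} --- is to bypass $\mathfrak{J}$ entirely: the denominators of the diagonal Pad\'e approximants to ${\mathfrak F}$ are kernel (quasi-orthogonal) polynomials of the form $P_{\mathfrak{n}_j+1}(\lambda)+\tau_jP_{\mathfrak{n}_j}(\lambda)$ with $\tau_j=-P_{\mathfrak{n}_j+1}(0)/P_{\mathfrak{n}_j}(0)$, so the poles are the eigenvalues of the rank-one perturbation $\cJ_{[0,\mathfrak{n}_j]}+\diag\{0,\dots,0,\tau_j\}$ of the \emph{symmetric} truncation of $J$, which is genuinely selfadjoint in $\ell^2$. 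From this one gets at once that the poles are real, that $\sup_j|\tau_j|<\infty$ confines them to a fixed compact set and yields a uniform bound $|{\mathfrak F}^{[\mathfrak{n}_j/\mathfrak{n}_j]}(\lambda)|\le C(|\lambda|-1-\varepsilon)^{-1}$ feeding a Vitali argument, and that $\sup_j|\tau_j|=\infty$ forces $|\lambda_{\max}|\ge|b_{\mathfrak{n}_j}+\tau_j|\to\infty$ along a subsequence, so spurious poles accumulate at $\infty$ and destroy convergence. Without this (or an equivalent) device your proof does not close; with it, the obstacle you describe never arises.
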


\begin{rem}
 In~\cite[Section~5.3]{DD09} we gave some sufficient conditions for the measure $\sigma$
to possess the property~\eqref{Pcond}.
\end{rem}

This result can be interpreted in terms of the Darboux transformations as follows.
\begin{thm}\label{DarbouxPade}
Let ${\mathfrak F}\in{\bf D}_{-\infty}^-$ have the form~\eqref{PartCase2}.
Then the following statements are equivalent:
\begin{enumerate}
\item[(i)] for the number
\[
\mathfrak{s}_{-1}:=\int_Ed\sigma(t),
\]
the operator ${U}={U}(\mathfrak{s}_{-1})$
in the factorization~\eqref{ind_UL} of $\mathfrak{J}$ corresponding to
${\mathfrak F}$
is bounded in $\ell^2_{[0,\infty)}$;
\item[(ii)] the operator $U$ in the factorization~\eqref{LU} of $J$
corresponding to the function
\[
{F}(\lambda)=\int_E\frac{d\sigma(t)}{t-\lambda}
\]
is bounded in $\ell^2_{[0,\infty)}$;
\item[(iii)]the sequence $\{{\mathfrak
F}^{[\mathfrak{n}_j/\mathfrak{n}_j]}\}_{j=0}^{\infty}$ of diagonal Pad\'e
approximants converges to ${\mathfrak F}$ locally uniformly in
$\widehat{\dC}\setminus ([-1-\varepsilon,\alpha]\cup[\beta, 1+\varepsilon])$
for some $ \varepsilon\ge 0$.
\end{enumerate}
\end{thm}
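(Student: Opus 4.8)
The plan is to single out the distinguished parameter $\mathfrak{s}_{-1}=\int_E d\sigma(t)$ and to observe that, with this choice, the Geronimus-type factorization of $\mathfrak{J}$ and the Christoffel-type factorization of $J$ share the very same triangular factors. First I would record the elementary identity $\frac{\mathfrak{F}(\lambda)-\mathfrak{s}_{-1}}{\lambda}=\int_E\frac{d\sigma(t)}{t-\lambda}=F(\lambda)$, obtained by writing $\frac{t}{\lambda(t-\lambda)}=\frac{1}{\lambda}+\frac{1}{t-\lambda}$. Since $\sigma$ is a nonnegative measure with infinite support, $\mathfrak{s}_{-1}=\int_E d\sigma(t)>0$, so $\mathfrak{s}_{-1}$ is an admissible parameter in the sense of Definition~\ref{deffunct-}, and the function $F$ appearing in statement~(ii) is exactly the Nevanlinna function produced from $\mathfrak{F}$ by the inverse of the Christoffel transform.

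The equivalence (i)$\Leftrightarrow$(ii) I would then reduce to the uniqueness of the $LU$-factorization. By Proposition~\ref{prop:3.3} the matrix $\mathfrak{J}$ admits the factorization~\eqref{ind_UL}, namely $\mathfrak{J}=UL$ with $U=U(\mathfrak{s}_{-1})$, and by the theorem following Proposition~\ref{prop:3.3} the matrix $J=LU$ is the monic Jacobi matrix associated with $F$. Since the factorization~\eqref{LU} of $J$ is unique (Proposition~\ref{prop31}), the factor $U$ occurring in~(i) coincides as an operator with the factor $U$ occurring in~(ii). Thus both statements refer literally to the same operator $U$, and their equivalence is immediate; no estimate is needed at this step.

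For (ii)$\Leftrightarrow$(iii) I would invoke Theorem~\ref{PA_Dminus}, which already identifies the locally uniform convergence in~(iii) with the boundedness condition~\eqref{Pcond} on the ratios $P_{\mathfrak{n}_j+1}(0)/P_{\mathfrak{n}_j}(0)$. It therefore remains to prove that $U$ is bounded on $\ell^2_{[0,\infty)}$ if and only if~\eqref{Pcond} holds. Because $\supp\sigma\subseteq[-1,1]$, the monic Jacobi matrix $J$ is bounded, hence the recurrence coefficients $b_n$ and $c_n$ are uniformly bounded. As $U$ is block bidiagonal with blocks of size $\mathfrak{k}_j\le 2$ and super-diagonal blocks $D_j$ built only from $0$'s and $1$'s, it is bounded precisely when $\sup_j|u_0^{(j)}|<\infty$ and $\sup_j|u_1^{(j)}|<\infty$; the relation $u_1^{(j)}=b_{\mathfrak{n}_j+1}$ makes the second supremum automatically finite.

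The heart of the matter, and the step I expect to demand the most care, is the analysis of $u_0^{(j)}$ through the four cases of Proposition~\ref{prop31}. When $\mathfrak{k}_j=2$ (so that $P_{\mathfrak{n}_j+1}(0)=0$) one has $u_0^{(j)}=c_{\mathfrak{n}_j}$ by~\eqref{eq:u0j} and~\eqref{fTh52_2}, and in the mixed case $\mathfrak{k}_{j-1}=2$, $\mathfrak{k}_j=1$ one has $u_0^{(j)}=b_{\mathfrak{n}_j}$ by~\eqref{fTh52_3}; all of these are bounded by the boundedness of $J$ and contribute either $0$ or a bounded term to~\eqref{Pcond}. Only in the case $\mathfrak{k}_{j-1}=\mathfrak{k}_j=1$ does one genuinely have $u_0^{(j)}=-P_{\mathfrak{n}_j+1}(0)/P_{\mathfrak{n}_j}(0)$ by~\eqref{fTh52_4}, and these are exactly the terms controlled by~\eqref{Pcond}. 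Consequently $\sup_j|u_0^{(j)}|<\infty$ holds if and only if~\eqref{Pcond} is fulfilled, and combining this with Theorem~\ref{PA_Dminus} closes the chain (i)$\Leftrightarrow$(ii)$\Leftrightarrow$(iii).
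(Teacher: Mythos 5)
Your proposal is correct and follows essentially the same route as the paper: the equivalence (i)$\Leftrightarrow$(ii) is the identification of the two factorizations (the paper dismisses this "by construction", which is exactly your uniqueness argument), and (ii)$\Leftrightarrow$(iii) is obtained by combining Theorem~\ref{PA_Dminus} with the explicit formulas \eqref{fTh52_1}--\eqref{fTh52_4} for the entries of $U$. Your case-by-case verification that $\sup_j|u_0^{(j)}|<\infty$ is equivalent to \eqref{Pcond} (using the boundedness of the recurrence coefficients for measures supported in $[-1,1]$) just spells out the details that the paper leaves implicit.
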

\begin{proof}
Clearly, the first statement is equivalent to the second one by construction.
So, it remains to prove the equivalence of (ii) and (iii).
First, suppose that (iii) holds true and, therefore, from Theorem~\ref{PA_Dminus}
we see that~\eqref{Pcond} is fulfilled. Then, according
to~\eqref{fTh52_1}, \eqref{fTh52_2}, \eqref{fTh52_3}, and~\eqref{fTh52_4},
the operator $U$ is bounded. The implication from (ii) to (iii) follows from
formulas~\eqref{fTh52_3},~\eqref{fTh52_4} and Theorem~\ref{PA_Dminus}.
\end{proof}
\begin{rem}
Under the condition of Theorem~\ref{DarbouxPade}, from~\eqref{fTh52_1},~\eqref{fTh52_2},~\eqref{eq_c21},~\eqref{fTh52_3},
and~\eqref{eq_c11} we see that the operators $U$ and $L$, given by~\eqref{LU}, are bounded or unbounded simultaneously.
\end{rem}

In some cases, we can specialize Theorem~\ref{DarbouxPade}.

\begin{cor}
Let ${\mathfrak F}$ admit the following representation
\begin{equation}\label{n_kappa_+}
{\mathfrak F}(\lambda)= \int_{0}^{1}\frac{td\sigma(t)}{t-\lambda}+
\sum_{j=1}^{p}\frac{a_jt_j}{t_j-\lambda},
\end{equation}
where $\sigma$ is a finite nonnegative measure on $[0,1]$,
$t_j\le 0$, and $a_j\ge 0$ for $j=1,\dots, p$.
Then the following statements hold true:
\begin{enumerate}
\item[(i)]the sequence $\{{\mathfrak
F}^{[\mathfrak{n}_j/\mathfrak{n}_j]}\}_{j=0}^{\infty}$ of diagonal
Pad\'e approximants converges to ${\mathfrak F}$ locally uniformly
in $\widehat{\dC}\setminus ([0,1]\cup\{t_k\}_{k+1}^{p})$;
\item[(ii)]
the polynomials $P_j$ orthogonal with respect to $\sigma+\sum_{j=1}^{p}a_j\Theta(\cdot-t_j)$
possess the following property
\[
\sup\limits_{j\in\dZ_+}\left|\frac{P_{\mathfrak{n}_j+1}(0)}{P_{\mathfrak{n}_j}(0)}\right|<\infty
\]
(here $\Theta$ denotes the Heaviside step function);
\item[(iii)] for the number
\[
\mathfrak{s}_{-1}:=\int_0^1d\sigma(t)+\sum_{j=1}^{p}a_j,
\]
the operator ${U}={U}(\mathfrak{s}_{-1})$ in the
factorization~\eqref{ind_UL} of $\mathfrak{J}$ corresponding to
${\mathfrak F}$ is bounded in $\ell^2_{[0,\infty)}$;
\item[(iv)] the operator $U$ in the factorization~\eqref{LU} of $J$
corresponding to the function
\begin{equation}\label{s_kappa_+}
{F}(\lambda)=\int_0^1\frac{d\sigma(t)}{t-\lambda}+
\sum_{j=1}^{p}\frac{a_j}{t_j-\lambda}
\end{equation}
is bounded in $\ell^2_{[0,\infty)}$.
\end{enumerate}
\end{cor}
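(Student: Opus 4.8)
The plan is to recognize that the function $\mathfrak F$ in~\eqref{n_kappa_+} is a special instance of the Markov-type function~\eqref{PartCase2}, so that Theorems~\ref{PA_Dminus} and~\ref{DarbouxPade} already supply all the equivalences among the four assertions, and only one of them has to be established directly. Put $E:=\{t_1,\dots,t_p\}\cup[0,1]$ and let $\wt\sigma$ be the nonnegative measure on $E$ which coincides with $\sigma$ on $[0,1]$ and carries the point mass $a_j$ at $t_j$. Since $t_j\le 0$ and $a_j\ge0$, one has $\int_E\frac{t\,d\wt\sigma(t)}{t-\lambda}=\int_0^1\frac{t\,d\sigma(t)}{t-\lambda}+\sum_{j=1}^p\frac{a_jt_j}{t_j-\lambda}=\mathfrak F(\lambda)$, so $\mathfrak F$ has the form~\eqref{PartCase2} with $\alpha=\max_jt_j\le0$ and $\beta=0$ (an atom at $t_j=0$ contributes nothing to $\mathfrak F$ and may be absorbed into the left endpoint of $[0,1]$). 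Consequently $\mathfrak F\in{\bf D}_{-\infty}^-$, the measure driving $F$ in~\eqref{s_kappa_+} is exactly $\wt\sigma=\sigma+\sum_ja_j\Theta(\cdot-t_j)$, and $\mathfrak s_{-1}=\int_Ed\wt\sigma$. Thus statement~(i) coincides with item~(iii) of Theorem~\ref{DarbouxPade}, statement~(iii) with item~(i), statement~(iv) with item~(ii), while statement~(ii) is precisely condition~\eqref{Pcond} of Theorem~\ref{PA_Dminus}; all four are therefore equivalent, and it suffices to establish the convergence in~(i).

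To prove~(i) I would split $\mathfrak F=\mathfrak F_0+R$, where $\mathfrak F_0(\lambda)=\int_0^1\frac{t\,d\sigma(t)}{t-\lambda}$ is a Markov (Stieltjes) function generated by the nonnegative measure $t\,d\sigma$ on $[0,1]$, and $R(\lambda)=\sum_{j=1}^p\frac{a_jt_j}{t_j-\lambda}$ is a rational function whose only poles lie at the points $t_j<0$, all outside $[0,1]$. By the classical theorem of Markov, the diagonal Pad\'e approximants of $\mathfrak F_0$ converge to $\mathfrak F_0$ locally uniformly in $\widehat{\dC}\setminus[0,1]$. The rational summand $R$ perturbs only finitely many of the normal indices, and by the rational-perturbation version of Markov's theorem the diagonal approximants of $\mathfrak F$ reproduce the $p$ poles of $R$ (one approximant pole tending to each $t_j$) while converging to $\mathfrak F$ elsewhere. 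Hence $\{\mathfrak F^{[\mathfrak n_j/\mathfrak n_j]}\}$ converges to $\mathfrak F$ locally uniformly in $\widehat{\dC}\setminus([0,1]\cup\{t_k\}_{k=1}^p)$, which is exactly~(i).

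With~(i) in hand, Theorem~\ref{DarbouxPade} yields items~(iii) and~(iv) (boundedness of $U(\mathfrak s_{-1})$ in~\eqref{ind_UL} and of $U$ in~\eqref{LU}), and Theorem~\ref{PA_Dminus} yields~(ii). Alternatively, one may bypass~(i) and verify~\eqref{Pcond} directly: $F$ in~\eqref{s_kappa_+} belongs to the generalized Stieltjes class ${\bf S}^{+\kappa}_{-\infty}$ with $\kappa\le p$, so by the corollary following Definition~\ref{deffunctSpm} the sequence $\mathfrak k_j$ stabilizes to $1$; for large $j$ the parameters appearing in~\eqref{fTh52_3}--\eqref{fTh52_4} agree with those of the purely Stieltjes part $t\,d\sigma$ on $[0,1]$, whose Christoffel transform is a bounded Jacobi matrix, whence $\sup_j|P_{\mathfrak n_j+1}(0)/P_{\mathfrak n_j}(0)|<\infty$. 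The main obstacle is the convergence step for the perturbed Markov function: one must justify the rational-perturbation form of Markov's theorem and confirm that the exceptional set is exactly $[0,1]\cup\{t_k\}$, in particular that the spurious poles of the approximants are confined to neighbourhoods of the $t_j$ and do not accumulate on $[0,1]$ or at the endpoint $0$.
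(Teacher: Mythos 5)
Your proposal is correct and follows essentially the same route as the paper: reduce (ii)--(iv) to (i) via Theorems~\ref{PA_Dminus} and~\ref{DarbouxPade}, and obtain (i) from the known convergence theory for Markov functions perturbed by rational terms with poles off $[0,1]$. The ``rational-perturbation version of Markov's theorem'' whose justification you flag as the main obstacle is exactly what the paper supplies by citing Rakhmanov~\cite{Rakh}, so no further argument is needed there.
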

\begin{proof}
Clearly, ${\mathfrak F}\in{\bf D}_{-\infty}^-$.
In fact, statement (i) for the class in question was proved in~\cite{Rakh}.
Statement (ii) is an immediate consequence of statement (i) and Theorem~\ref{PA_Dminus}.
The rest follows from Theorem~\ref{DarbouxPade}.
\end{proof}
\begin{rem}
 As was mentioned in Subsection 4.3  the function
$F$ in~\eqref{s_kappa_+} belongs to the class ${\bf S}^{-p}$.
\end{rem}

\subsection{Pad\'e approximants for ${\bf D}_{-\infty}^+$ functions}

In this section we present convergence results for diagonal Pad\'e approximants to
${\bf D}_{-\infty}^+$ functions.

By re-examining~\cite[Theorem~4.16]{DD07}, \cite[Theorem~3.3]{DD09}
and~\cite[Theorem~5.5]{DD09}, we arrive at the following more general result.

\begin{prop}
\label{quasipol}
Let 
\[
F(\lambda)=\int_{-1}^{1}\frac{d\sigma(t)}{t-\lambda}.
\]
Then for any sequence $\{\tau_j\}_{j=1}^{\infty}$ of real numbers
the following relation holds true
\begin{equation}\label{modPA_as}
\widehat{F}^{[\mathfrak{n}_j/\mathfrak{n}_j]}:=
-\frac{Q_{\mathfrak{n}_j}(\lambda)+\tau_jQ_{\mathfrak{n}_j-1}(\lambda)}
{P_{\mathfrak{n}_j}(\lambda)+\tau_jP_{\mathfrak{n}_j-1}(\lambda)}
= -\sum_{i=0}^{2\mathfrak{n}_j-2}\frac{s_{i}}{\lambda^{i+1}}+
                O\left(\frac{1}{\lambda^{2\mathfrak{n}_j}}\right),\quad
                \lambda\to\infty,
\end{equation}
where  $P_j$ and $Q_j$ are  polynomials of the first and second
kind, respectively, corresponding to $\sigma$. Moreover, the
sequence of the modified Pad\'e approximants
$\widehat{F}^{[\mathfrak{n}_j/\mathfrak{n}_j]}$ converges to ${F}$
locally uniformly in $\widehat{\dC}\setminus
([-1-\varepsilon,1+\varepsilon])$ for some $ \varepsilon\ge0$ if and
only if the condition
\begin{equation}\label{Pcond2}
\sup\limits_{j\in\dN}\left|\tau_j\right|<\infty
\end{equation}
is satisfied.
\end{prop}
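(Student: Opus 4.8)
The plan is to base everything on the functions of the second kind $R_m(\lambda):=F(\lambda)P_m(\lambda)+Q_m(\lambda)$. First I would record the identity
\[
R_m(\lambda)=\int_{-1}^{1}\frac{P_m(t)}{t-\lambda}\,d\sigma(t),
\]
which is immediate from $Q_m(\lambda)=\int\frac{P_m(\lambda)-P_m(t)}{\lambda-t}\,d\sigma(t)$. Expanding $1/(t-\lambda)$ in powers of $1/\lambda$ and using the orthogonality $\int t^kP_m\,d\sigma=0$ for $k<m$ gives $R_m(\lambda)=O(\lambda^{-m-1})$. Setting $\widetilde P_j:=P_{\mathfrak n_j}+\tau_jP_{\mathfrak n_j-1}$ and $\widetilde Q_j:=Q_{\mathfrak n_j}+\tau_jQ_{\mathfrak n_j-1}$, linearity yields $F\widetilde P_j+\widetilde Q_j=R_{\mathfrak n_j}+\tau_jR_{\mathfrak n_j-1}=O(\lambda^{-\mathfrak n_j})$; since $\widetilde P_j$ is monic of degree $\mathfrak n_j$, division produces
\[
F(\lambda)-\widehat F^{[\mathfrak n_j/\mathfrak n_j]}(\lambda)=\frac{R_{\mathfrak n_j}(\lambda)+\tau_jR_{\mathfrak n_j-1}(\lambda)}{\widetilde P_j(\lambda)}=O\!\left(\frac{1}{\lambda^{2\mathfrak n_j}}\right),
\]
which is exactly \eqref{modPA_as}. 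This part is routine.

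For the convergence statement I would exploit that $\widehat F^{[\mathfrak n_j/\mathfrak n_j]}=-\widetilde Q_j/\widetilde P_j$ is, by \eqref{mQP}, the $m$-function of the finite Jacobi matrix $\widetilde J_j$ obtained from the truncation $J_{[0,\mathfrak n_j-1]}$ by subtracting $\tau_j$ from its last diagonal entry; that boundary change sends $P_{\mathfrak n_j}\mapsto\widetilde P_j$ and $Q_{\mathfrak n_j}\mapsto\widetilde Q_j$. Hence each $\widehat F^{[\mathfrak n_j/\mathfrak n_j]}=\int\frac{d\mu_j(t)}{t-\lambda}$ with $\mu_j\ge 0$ a discrete measure of total mass $s_0$ carried by the spectrum of $\widetilde J_j$. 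Because the perturbation has norm $|\tau_j|$ and $\sigma$ is carried by $[-1,1]$, Weyl's inequality places $\operatorname{supp}\mu_j\subseteq[-1-|\tau_j|,1+|\tau_j|]$.

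If $\sup_j|\tau_j|=:C<\infty$, then all $\mu_j$ are supported in the fixed compact $[-1-C,1+C]$ with common mass $s_0$, so the family is tight. By \eqref{modPA_as} the moments of $\mu_j$ agree with those of $\sigma$ up to order $2\mathfrak n_j-2\to\infty$, so every weak-$*$ limit of a subsequence has all moments equal to those of $\sigma$; as $\sigma$ is compactly supported, hence determinate, that limit must be $\sigma$. Therefore $\mu_j\to\sigma$ weakly-$*$ and $\widehat F^{[\mathfrak n_j/\mathfrak n_j]}\to F$ locally uniformly on $\widehat\dC\setminus[-1-C,1+C]$, giving convergence with $\varepsilon=C$. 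Conversely, if $\sup_j|\tau_j|=\infty$, choose a subsequence with $|\tau_{j_k}|\to\infty$. The Rayleigh quotient of $\widetilde J_{j_k}$ at its last coordinate vector equals $b_{\mathfrak n_{j_k}-1}-\tau_{j_k}$, where $|b_i|\le 1$ (the diagonal entries are unchanged under the symmetrization \eqref{symtomon} and the spectrum equals $\operatorname{supp}\sigma\subseteq[-1,1]$). This number lies in the numerical range of $\widetilde J_{j_k}$, forcing its spectral radius to be at least $|\tau_{j_k}|-1\to\infty$; equivalently $\widetilde P_{j_k}$ has a zero escaping to $\infty$. Since $\widetilde P_{j_k}$ and $\widetilde Q_{j_k}$ are coprime (Remark~\ref{ResSet}, applied to $\widetilde J_{j_k}$), this is a genuine pole of $\widehat F^{[\mathfrak n_{j_k}/\mathfrak n_{j_k}]}$; as $\infty$ belongs to every set $\widehat\dC\setminus[-1-\varepsilon,1+\varepsilon]$ while $F$ is holomorphic there, such escaping poles preclude locally uniform convergence for every $\varepsilon\ge 0$.

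The main obstacle is the necessity half: one must verify cleanly that an unbounded $\tau_j$ forces a pole to run off to $\infty$ rather than producing only a large residue or a removable cancellation, which is exactly why the coprimeness of $\widetilde P_{j_k},\widetilde Q_{j_k}$ and the spherical-metric bookkeeping at $\infty$ are essential; the determinacy of $\sigma$ together with the uniform support bound drives the sufficiency half. This is the mechanism underlying \cite[Theorem~4.16]{DD07} and \cite[Theorems~3.3,~5.5]{DD09}: the parameter $\tau_j$ plays the role of the free boundary datum governing the extra pole of the associated generalized Jacobi matrix, so the condition $\sup_j|\tau_j|<\infty$ in \eqref{Pcond2} is the exact counterpart of \eqref{Pcond} in Theorem~\ref{PA_Dminus}.
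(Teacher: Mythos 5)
Your argument is correct, and its engine is the same as the paper's: both identify $\widehat{F}^{[\mathfrak{n}_j/\mathfrak{n}_j]}$ with the $m$-function of the truncation $\cJ_{[0,\mathfrak{n}_j-1]}$ perturbed in its last diagonal entry, derive sufficiency from the resulting operator-norm bound $\Vert\cJ_{[0,\mathfrak{n}_j-1]}^{(\tau_j)}\Vert\le 1+\sup_j|\tau_j|$, and derive necessity from the Rayleigh quotient at the last basis vector, which forces an eigenvalue (hence a pole) of modulus at least $|\tau_j|-1$. The differences are in the supporting steps. First, you prove \eqref{modPA_as} directly via the functions of the second kind $R_m=FP_m+Q_m$ and orthogonality, where the paper simply cites Akhiezer; your derivation is complete and correct. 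Second, for sufficiency the paper stays on the resolvent side (uniform bound $1/(|\lambda|-1-\varepsilon)$, pointwise convergence imported from Simon, then Vitali), whereas you pass to the spectral measures $\mu_j$ and combine the uniform support bound with moment matching to order $2\mathfrak{n}_j-2$ and determinacy of the compactly supported $\sigma$ to get weak-$*$ convergence; this is a self-contained variant that replaces the external pointwise-convergence input by a standard moment argument, at the cost of having to note that weak-$*$ convergence of the $\mu_j$ upgrades to locally uniform convergence of their Cauchy transforms off the common support (which your uniform bound plus Vitali, or equicontinuity, supplies). Third, you make explicit the coprimeness of $P_{\mathfrak{n}_j}+\tau_jP_{\mathfrak{n}_j-1}$ and $Q_{\mathfrak{n}_j}+\tau_jQ_{\mathfrak{n}_j-1}$, needed to guarantee that the escaping eigenvalue is a genuine pole rather than a removable singularity; the paper leaves this implicit, and it is a worthwhile addition. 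A minor remark: with the paper's determinant conventions the perturbed entry is indeed $b_{\mathfrak{n}_j-1}-\tau_j$, as you write (the paper displays $+\tau_j$); either way only $|\tau_j|$ enters the estimates, so nothing changes.
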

\begin{proof}
The relation~\eqref{modPA_as} is an appropriate reformulation of the results of~\cite[Section~I.4.2]{Ach61}.
Next, notice that the symmetric Jacobi matrix ${\cJ}$
associated with $\sigma$ is a bounded linear operator in ${\ell}^2_{[0,\infty)}$.
Furthermore, using the classical versions of formulas~\eqref{polynom2},~\eqref{SWfun},
and~\eqref{mQP}, the modified Pad\'e approximant can be rewritten as follows
\begin{equation}\label{modPA_repr}
\begin{split}
\widehat{F}^{[\mathfrak{n}_j/\mathfrak{n}_j]}(\lambda)&=
-\frac{Q_{\mathfrak{n}_j}(\lambda)+\tau_jQ_{\mathfrak{n}_j-1}(\lambda)}
{P_{\mathfrak{n}_j}(\lambda)+\tau_jP_{\mathfrak{n}_j-1}(\lambda)}
=-\frac{\det(\lambda-\cJ_{[1,\mathfrak{n}_j-1]}^{(\tau_j)})}
{\det(\lambda-\cJ_{[0,\mathfrak{n}_j-1]}^{(\tau_j)})}\\
&=\left((\cJ_{[0,\mathfrak{n}_j-1]}^{(\tau_j)}-\lambda)^{-1}e,e\right),
\end{split}
\end{equation}
where $\cJ_{[0,\mathfrak{n}_j-1]}^{(\tau_j)}$ is a rank-one perturbation
of $\cJ_{[0,\mathfrak{n}_j-1]}$ of the form
\[
\cJ_{[0,\mathfrak{n}_j-1]}^{(\tau_j)}=
\begin{pmatrix}
{b}_{0}   & \sqrt{c}_{0} &       &\\
\sqrt{c}_{0}   &\ddots    &\ddots&\\
        &\ddots    & \ddots& \sqrt{c}_{\mathfrak{n}_j-2}\\
&       &\sqrt{c}_{\mathfrak{n}_j-2} &b_{\mathfrak{n}_j-1}+\tau_j\\
\end{pmatrix}=\cJ_{[0,\mathfrak{n}_j-1]}+\diag\{0,\dots,0,\tau_j\}
\]
(cf.~\cite[Proposition~5.8]{Si}). Now, let us suppose that~\eqref{Pcond2} is satisfied. Then one
obtains
\begin{equation}\label{eq:est}
\Vert \cJ_{[0,\mathfrak{n}_j-1]}^{(\tau_j)}\Vert\le\Vert
\cJ_{[0,\mathfrak{n}_j-1]}\Vert+|\tau_{j}|\le 1+\varepsilon,\quad j\in\dZ_+,
\end{equation}
for some $\varepsilon>0$.
It follows from~\eqref{modPA_repr}, the inequalities
\[
\Vert(\cJ_{[0,\mathfrak{n}_j-1]}^{(\tau_j)}-\lambda)^{-1}\Vert \le\frac{1}
{|\lambda|-\|\cJ_{[0,\mathfrak{n}_j-1]}^{(\tau_j)}\|}\quad
(|\lambda|>\|\cJ_{[0,\mathfrak{n}_j-1]}^{(\tau_j)}\|)
\]
and~\eqref{eq:est} that
\[
\begin{split}
\left|\widehat{F}^{[\mathfrak{n}_j/\mathfrak{n}_j]}(\lambda)\right|&=
\left|\left((\cJ_{[0,\mathfrak{n}_j-1]}^{(\tau_j)}-\lambda)^{-1}e,e\right)_{\ell^2}\right|\\
&\le\Vert(\cJ_{[0,\mathfrak{n}_j-1]}^{(\tau_j)}-\lambda)^{-1}e\Vert_{\ell^2}\Vert
e\Vert_{\ell^2} \le \frac{1}{|\lambda|-1-\varepsilon}.
\end{split}
\]
for $|\lambda|>1+\varepsilon$.
The pointwise convergence $\widehat{F}^{[\mathfrak{n}_j/\mathfrak{n}_j]}(\lambda)$ to
$F(\lambda)$ for $\lambda\in\dC\setminus\dR$ follows from the proof of~\cite[Proposition~5.3]{Si}.
Finally, it remains to apply the Vitali theorem.

Let us prove the necessity by proving the
contrary statement. So, suppose that
\[
\sup\limits_{j\in\dN}\left|\tau_j\right|=\infty.
\]
Further, note that the poles of the modified Pad\'e approximant
$\widehat{F}^{[\mathfrak{n}_j/\mathfrak{n}_j]}$ coincide with eigenvalues of the
matrix $\cJ_{[0,\mathfrak{n}_j-1]}^{(\tau_j)}$. Taking into account that
$\cJ_{[0,\mathfrak{n}_j-1]}^{(\tau_j)}$ is a self-adjoint matrix, one obtains
\[
|\lambda_{max}(\cJ_{[0,\mathfrak{n}_j-1]}^{(\tau_j)})|=
\Vert \cJ_{[0,\mathfrak{n}_j-1]}^{(\tau_j)}\Vert
\ge|(\cJ_{[0,\mathfrak{n}_j-1]}^{(\tau_j)}e_{\mathfrak{n}_j-1},e_{\mathfrak{n}_j-1})|
=\left|b_{\mathfrak{n}_j-1}+\tau_j\right|,
\]
where $\lambda_{max}(\cJ_{[0,\mathfrak{n}_j-1]}^{(\tau_j)})$ is the eigenvalue of the
matrix $\cJ_{[0,\mathfrak{n}_j-1]}^{(\tau_j)}$ with the largest absolute value.
Since the sequence $\{b_k\}_{k=0}^{\infty}$ is bounded, we
have that infinity is an accumulation point of the set of all
poles of the modified Pad\'e approximants $\widehat{F}^{[\mathfrak{n}_j/\mathfrak{n}_j]}$.
\end{proof}

\begin{thm}\label{PA_Dplus}
Let $\sigma$ be a finite nonnegative measure on $[-1,1]$ and let
\begin{equation}\label{fDplus}
    {\mathfrak F}(\lambda)=\frac{1}{\lambda}\int_{-1}^{1}\frac{d\sigma(t)}{t-\lambda}-\frac{s_{-1}}{\lambda},
\end{equation}
where $s_{-1}$ is a fixed real number.
Then ${\mathfrak F}\in{\bf D}_{-\infty}^+$ and the sequence $\{{\mathfrak
F}^{[\mathfrak{n}_j/\mathfrak{n}_j]}\}_{j=0}^{\infty}$ converges to ${\mathfrak F}$ locally uniformly in
$\widehat{\dC}\setminus ([-1-\varepsilon, 1+\varepsilon])$
for some $ \varepsilon\ge0$ if and only if the
condition
\begin{equation}\label{Pcond3}
\sup\limits_{j\in\dZ_+}\left|\frac{Q_{\mathfrak{n}_j}(0)-s_{-1}P_{\mathfrak{n}_j}(0)}
{Q_{\mathfrak{n}_j-1}(0)-s_{-1}P_{\mathfrak{n}_j-1}(0)}\right|<\infty
\end{equation}
is fulfilled for the polynomials $P_j$ and $Q_j$ of the first and second kind
corresponding to $\sigma$.
\end{thm}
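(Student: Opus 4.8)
The plan is to reduce the statement to Proposition~\ref{quasipol} by realizing each diagonal Pad\'e approximant of $\mathfrak F$ as the $\lambda$--transform of a modified Pad\'e approximant of $F=\int_{-1}^{1}\frac{d\sigma(t)}{t-\lambda}$. That $\mathfrak F\in{\bf D}_{-\infty}^+$ is immediate: since $\mathfrak F(\lambda)=(F(\lambda)-s_{-1})/\lambda$ with $F\in{\bf N}_{-\infty}$, its expansion~\eqref{asympF} has $\mathfrak s_0=s_{-1}$, $\mathfrak s_{k+1}=s_k$, and $\lambda\mathfrak F(\lambda)+\mathfrak s_0=F(\lambda)\in{\bf N}_{-\infty}$, which is exactly Definition~\ref{deffunct+}. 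The relevant normal indices $\mathfrak n_j$ of $\mathfrak s=\{s_{k-1}\}_{k=0}^\infty$ are those furnished by Proposition~\ref{prop:4.3} and are characterized there by $\widehat P_{\mathfrak n_j-1}(0)\neq0$, with $\widehat P_j:=Q_j-s_{-1}P_j$. Finally, $0$ lies in the removed interval $[-1-\varepsilon,1+\varepsilon]$, so $1/\lambda$ is holomorphic and locally bounded on $\widehat{\dC}\setminus[-1-\varepsilon,1+\varepsilon]$; this is what lets me pass convergence through the factor $\lambda$.

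By the $m$--function calculus of Section~2, formula~\eqref{mQP} identifies the diagonal approximant as $\mathfrak F^{[\mathfrak n_j/\mathfrak n_j]}=-\mathfrak Q_j/\mathfrak P_j$, with $\mathfrak P_j$ monic of degree $\mathfrak n_j$; this is the genuine $[\mathfrak n_j/\mathfrak n_j]$ approximant since~\eqref{Qasympt+1} matches $\mathfrak F$ to order $O(\lambda^{-2\mathfrak n_j-\mathfrak k_j})$ and $\mathfrak n_j$ is normal. For the modified approximant $\widehat F^{[\mathfrak n_j/\mathfrak n_j]}$ of Proposition~\ref{quasipol} I would fix the parameter $\tau_j$ by the single requirement that the numerator of $\widehat F^{[\mathfrak n_j/\mathfrak n_j]}-s_{-1}$ vanish at $\lambda=0$, i.e. $\widehat F^{[\mathfrak n_j/\mathfrak n_j]}(0)=s_{-1}$. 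This determines $\tau_j$ uniquely, and evaluating with the determinantal formulas~\eqref{eq:Pfrak}--\eqref{eq:Pfrak2} gives
\[
|\tau_j|=\left|\frac{Q_{\mathfrak n_j}(0)-s_{-1}P_{\mathfrak n_j}(0)}{Q_{\mathfrak n_j-1}(0)-s_{-1}P_{\mathfrak n_j-1}(0)}\right|=\left|\frac{\widehat P_{\mathfrak n_j}(0)}{\widehat P_{\mathfrak n_j-1}(0)}\right|,
\]
the denominator being nonzero precisely by Proposition~\ref{prop:4.3}(i). Thus $\sup_j|\tau_j|<\infty$ is verbatim condition~\eqref{Pcond3}.

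With $\tau_j$ so chosen, $(\widehat F^{[\mathfrak n_j/\mathfrak n_j]}-s_{-1})/\lambda$ is a rational function of type $[\mathfrak n_j/\mathfrak n_j]$ (the factor $\lambda$ in the denominator cancels against the vanishing numerator), and since~\eqref{modPA_as} gives $\widehat F^{[\mathfrak n_j/\mathfrak n_j]}-F=O(\lambda^{-2\mathfrak n_j})$ we obtain
\[
\frac{\widehat F^{[\mathfrak n_j/\mathfrak n_j]}(\lambda)-s_{-1}}{\lambda}-\mathfrak F(\lambda)=\frac{\widehat F^{[\mathfrak n_j/\mathfrak n_j]}(\lambda)-F(\lambda)}{\lambda}=O\!\left(\lambda^{-2\mathfrak n_j-1}\right).
\]
By uniqueness of the diagonal approximant at the normal index $\mathfrak n_j$ this forces $\mathfrak F^{[\mathfrak n_j/\mathfrak n_j]}=(\widehat F^{[\mathfrak n_j/\mathfrak n_j]}-s_{-1})/\lambda$ (equivalently $\mathfrak P_j=P_{\mathfrak n_j}+\tau_jP_{\mathfrak n_j-1}$, which can be read off from the $\mathcal S_t$--orthogonality of $\mathfrak P_j$ to $1,t,\dots,t^{\mathfrak n_j-2}$, where $\mathcal S_t(t^m)=s_m$). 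Consequently $\mathfrak F^{[\mathfrak n_j/\mathfrak n_j]}-\mathfrak F=(\widehat F^{[\mathfrak n_j/\mathfrak n_j]}-F)/\lambda$, and since $1/\lambda$ is bounded on compacta of $\widehat{\dC}\setminus[-1-\varepsilon,1+\varepsilon]$ (and both families decay at $\infty$), the local uniform convergence of $\mathfrak F^{[\mathfrak n_j/\mathfrak n_j]}$ to $\mathfrak F$ there is equivalent to that of $\widehat F^{[\mathfrak n_j/\mathfrak n_j]}$ to $F$. By Proposition~\ref{quasipol} the latter holds iff $\sup_j|\tau_j|<\infty$, i.e. iff~\eqref{Pcond3}, which proves the theorem.

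I expect the main obstacle to be the identification $\mathfrak F^{[\mathfrak n_j/\mathfrak n_j]}=(\widehat F^{[\mathfrak n_j/\mathfrak n_j]}-s_{-1})/\lambda$: one must track degrees carefully so that imposing $\widehat F^{[\mathfrak n_j/\mathfrak n_j]}(0)=s_{-1}$ drops the type to exactly $[\mathfrak n_j/\mathfrak n_j]$, invoke uniqueness of Pad\'e approximants at normal indices, and carry out the determinantal computation that turns $\tau_j$ into the ratio of $\widehat P$--values in~\eqref{Pcond3}. The well-definedness of $\tau_j$ (nonvanishing denominator) is the one input that genuinely needs the normal--index characterization of Proposition~\ref{prop:4.3}(i).
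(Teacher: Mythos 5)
Your proposal is correct and follows essentially the same route as the paper: choose $\tau_j$ so that the modified approximant $\widehat F^{[\mathfrak n_j/\mathfrak n_j]}$ of $F$ takes the value $s_{-1}$ at the origin, identify $\mathfrak F^{[\mathfrak n_j/\mathfrak n_j]}=\bigl(\widehat F^{[\mathfrak n_j/\mathfrak n_j]}-s_{-1}\bigr)/\lambda$, observe that $|\tau_j|$ is exactly the ratio in~\eqref{Pcond3}, and invoke Proposition~\ref{quasipol}. You merely spell out the degree count, the uniqueness argument at normal indices, and the membership $\mathfrak F\in{\bf D}_{-\infty}^+$ in more detail than the paper does.
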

\begin{proof}
Observe that by setting
\[
\tau_{j}=-\frac{Q_{\mathfrak{n}_j}(0)-s_{-1}P_{\mathfrak{n}_j}(0)}
{Q_{\mathfrak{n}_j-1}(0)-s_{-1}P_{\mathfrak{n}_j-1}(0)}
\]
we have that ${\mathfrak F}^{[\mathfrak{n}_j/\mathfrak{n}_j]}(0)=s_{-1}$. Thus,
due to~\eqref{eq:1.1} and~\eqref{modPA_as} one has the following representation
of the diagonal Pad\'e approximants to
${\mathfrak F}$
\[
{\mathfrak F}^{[\mathfrak{n}_j/\mathfrak{n}_j]}(\lambda)=\frac{1}{\lambda}\left(
\frac{Q_{\mathfrak{n}_j}(\lambda)+\tau_jQ_{\mathfrak{n}_j-1}(\lambda)}
{P_{\mathfrak{n}_j}(\lambda)+\tau_jP_{\mathfrak{n}_j-1}(\lambda)}-s_{-1}\right).
\]
Now the statement is an immediate consequence of Proposition~\ref{quasipol}.
\end{proof}

The above result can be reformulated in terms of the Darboux transformations as follows.
\begin{thm}\label{ThOnPD}
Let ${\mathfrak F}\in{\bf D}_{-\infty}^+$ have the form~\eqref{PartCase2}.
Then the following statements are equivalent:
\begin{enumerate}
\item[(i)]
the operator $\mathfrak{L}$
in the factorization~\eqref{ind_LU} of $\mathfrak{J}$ corresponding to
${\mathfrak F}$
is bounded in $\ell^2_{[0,\infty)}$;
\item[(ii)] the operator $\mathfrak{L}=\mathfrak{L}(s_{-1})$ in the factorization~\eqref{UL} of $J$
with the parameter $s_{-1}=\mathfrak{s}_{0}$ corresponding to the function
\[
{F}(\lambda)=\int_E\frac{d\sigma(t)}{t-\lambda}
\]
is bounded in $\ell^2_{[0,\infty)}$;
\item[(iii)]the sequence $\{{\mathfrak
F}^{[\mathfrak{n}_j/\mathfrak{n}_j]}\}_{j=0}^{\infty}$ of diagonal Pad\'e
approximants converges to ${\mathfrak F}$ locally uniformly in
$\widehat{\dC}\setminus ([-1-\varepsilon, 1+\varepsilon])$
for some $ \varepsilon\ge 0$.
\end{enumerate}
\end{thm}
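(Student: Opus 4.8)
The plan is to follow the proof of Theorem~\ref{DarbouxPade}, the only difference being that here the relevant factor is the lower-triangular $\mathfrak{L}$ rather than the upper-triangular one, so that the role played there by~\eqref{Pcond} is now played by~\eqref{Pcond3}. First I would observe that the two factorizations in (i) and (ii) involve literally the same operator $\mathfrak{L}$. Writing ${\mathfrak F}$ in the representation~\eqref{fDplus}, its expansion reads ${\mathfrak F}(\lambda)\sim-\mathfrak{s}_0/\lambda-\dots$ with $\mathfrak{s}_0=s_{-1}$, so $\lambda{\mathfrak F}(\lambda)+\mathfrak{s}_0=F(\lambda)$. By Theorem~\ref{Geronimus} the factorization $\mathfrak{J}=\mathfrak{L}\mathfrak{U}$ of~\eqref{ind_LU} produces the monic Jacobi matrix $J=\mathfrak{U}\mathfrak{L}$ associated with $F$, and by Theorem~\ref{Geron} this identity is exactly the $UL$-factorization~\eqref{UL} of $J$ with parameter $s_{-1}=\mathfrak{s}_0$. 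Hence (i)$\Leftrightarrow$(ii) holds by construction, and it remains to prove (ii)$\Leftrightarrow$(iii).

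Because the support of $\sigma$ is contained in $[-1,1]$, the monic Jacobi matrix $J$ is bounded, so $\{b_k\}$ and $\{c_k\}$ are bounded sequences. Since $\mathfrak{L}$ is block lower-bidiagonal with blocks of size at most $2$, it is bounded on $\ell^2_{[0,\infty)}$ if and only if the scalar sequences $\{\mathfrak{l}_j\}$ and $\{\mathfrak{e}_j\}$ are bounded; the diagonal parameters $\mathfrak{e}_j$ occur only for $\mathfrak{k}_j=2$ and equal $b_{\mathfrak{n}_j}$ (as one reads off from~\eqref{eq:UEj}), so they are automatically bounded and everything reduces to $\{\mathfrak{l}_j\}$. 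Here the key computation, which I would extract from~\eqref{f_UL_ind_1} together with the three-term recurrence~\eqref{recrelmonp} for $\wh{P}_j=Q_j-s_{-1}P_j$, is the following dichotomy: if $\mathfrak{k}_j=1$ then $\mathfrak{n}_{j+1}-1=\mathfrak{n}_j$ and $\mathfrak{l}_j=-\wh{P}_{\mathfrak{n}_j}(0)/\wh{P}_{\mathfrak{n}_j-1}(0)=\tau_j$, the quantity whose boundedness is condition~\eqref{Pcond3}; if $\mathfrak{k}_j=2$ then $\wh{P}_{\mathfrak{n}_j}(0)=0$, and evaluating~\eqref{recrelmonp} at $\lambda=0$ gives $\wh{P}_{\mathfrak{n}_j+1}(0)=-c_{\mathfrak{n}_j-1}\wh{P}_{\mathfrak{n}_j-1}(0)$, whence $\mathfrak{l}_j=-\wh{P}_{\mathfrak{n}_j+1}(0)/\wh{P}_{\mathfrak{n}_j-1}(0)=c_{\mathfrak{n}_j-1}$.

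Granting the dichotomy, both implications are immediate. For (ii)$\Rightarrow$(iii): boundedness of $\mathfrak{L}$ gives $\sup_j|\mathfrak{l}_j|<\infty$, and since $|\tau_j|=|\mathfrak{l}_j|$ for $\mathfrak{k}_j=1$ while $\tau_j=0$ for $\mathfrak{k}_j=2$, condition~\eqref{Pcond3} holds; Theorem~\ref{PA_Dplus} then yields the locally uniform convergence in (iii). For (iii)$\Rightarrow$(ii): \eqref{Pcond3} bounds the entries $\mathfrak{l}_j$ with $\mathfrak{k}_j=1$, the entries with $\mathfrak{k}_j=2$ equal the bounded numbers $c_{\mathfrak{n}_j-1}$, and $\{\mathfrak{e}_j\}$ is bounded, so $\mathfrak{L}$ is bounded.

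The step I expect to be the main obstacle is precisely the dichotomy of the second paragraph: one must see that at the degenerate indices (those with $\mathfrak{k}_j=2$, equivalently $\wh{P}_{\mathfrak{n}_j}(0)=0$) the entry $\mathfrak{l}_j$ collapses, through the recurrence, to a single bounded entry $c_{\mathfrak{n}_j-1}$ of $J$, rather than to a ratio of values of $\wh{P}$ that~\eqref{Pcond3} never tests. This is where the boundedness of $J$ --- guaranteed by the hypothesis on the support of $\sigma$ --- is genuinely used, and it is what makes the ``coarse'' condition~\eqref{Pcond3}, which controls only the normal-index ratios $\tau_j$, nonetheless sufficient for boundedness of the entire operator $\mathfrak{L}$. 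The same bookkeeping can alternatively be read off from the case formulas~\eqref{fTh55_1}--\eqref{fTh55_4}.
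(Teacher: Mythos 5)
Your proof is correct and takes essentially the same route as the paper's: (i)$\Leftrightarrow$(ii) by construction, and (ii)$\Leftrightarrow$(iii) via Theorem~\ref{PA_Dplus} combined with the entry formulas \eqref{fTh55_1}--\eqref{fTh55_4} for $\mathfrak{L}$. Your explicit dichotomy for $\mathfrak{l}_j$ according to whether $\mathfrak{k}_j=1$ or $\mathfrak{k}_j=2$ (and the observation that $\mathfrak{e}_j=b_{\mathfrak{n}_j}$ is automatically bounded) is precisely the content of those case formulas, which the paper invokes without spelling out.
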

\begin{proof}
Obviously, the first statement is equivalent to the second one
 by  construction.
So, the equivalence of (ii) and (iii) proves the theorem.
First, suppose that (iii) holds true and, therefore, from Theorem~\ref{PA_Dplus}
we see that~\eqref{Pcond3} is fulfilled. Then, according
to~\eqref{fTh55_1}, \eqref{fTh55_2}, \eqref{fTh55_3}, and~\eqref{fTh55_4},
the operator $L$ is bounded. The implication from (ii) to (iii) follows from
formulas~\eqref{fTh55_2},~\eqref{fTh55_4} and Theorem~\ref{PA_Dplus}.
\end{proof}

In some cases, we can specialize Theorem~\ref{ThOnPD}.

\begin{cor}
Let ${\mathfrak F}$ admit the following representation
\begin{equation}\label{n_kappa_-}
{\mathfrak F}(\lambda)=
\frac{1}{\lambda}\int_{0}^{1}\frac{d\sigma(t)}{t-\lambda}+
\frac{1}{\lambda}\sum_{j=1}^{p}\frac{a_j}{t_j-\lambda}-
\frac{s_{-1}}{\lambda},
\end{equation}
where $\sigma$ is a finite nonnegative measure on $[0,1]$,
$t_j<0$, $a_j\ge 0$ for $j=1,\dots, p$, and $s_{-1}$ is a fixed real number.
Then the following statements hold true:
\begin{enumerate}
\item[(i)]the sequence $\{{\mathfrak
F}^{[\mathfrak{n}_j/\mathfrak{n}_j]}\}_{j=0}^{\infty}$ of diagonal Pad\'e
approximants converges to ${\mathfrak F}$ locally uniformly in
$\widehat{\dC}\setminus ([0,1]\cup\{t_k\}_{k+1}^{p})$;
\item[(ii)]
the polynomials $P_j$ and $Q_j$ corresponding to $\sigma+\sum_{j=1}^{p}a_j\Theta(\cdot-t_j)$
possess the following property
\[
\sup\limits_{j\in\dZ_+}\left|\frac{Q_{\mathfrak{n}_j}(0)-s_{-1}P_{\mathfrak{n}_j}(0)}
{Q_{\mathfrak{n}_j-1}(0)-s_{-1}P_{\mathfrak{n}_j-1}(0)}\right|<\infty;
\]
\item[(iii)]
the operator $\mathfrak{L}$
in the factorization~\eqref{ind_LU} of $\mathfrak{J}$ corresponding to
${\mathfrak F}$
is bounded in $\ell^2_{[0,\infty)}$;
\item[(iv)] the operator $\mathfrak{L}=\mathfrak{L}(s_{-1})$ in the factorization~\eqref{UL} of $J$
with the parameter $s_{-1}=\mathfrak{s}_{0}$ corresponding to the function
\begin{equation}\label{s_kappa_-}
{F}(\lambda)=\int_0^1\frac{d\sigma(t)}{t-\lambda}+\sum_{j=1}^{p}\frac{a_j}{t_j-\lambda}
\end{equation}
is bounded in $\ell^2_{[0,\infty)}$.
\end{enumerate}
\end{cor}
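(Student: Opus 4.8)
The plan is to mirror the proof of the preceding corollary, so that the only genuinely new analytic input will be the convergence statement~(i); the remaining assertions are then read off from theorems already established. First I would record that $\mathfrak{F}\in{\bf D}_{-\infty}^+$. Indeed, $\lambda\mathfrak{F}(\lambda)+s_{-1}=F(\lambda)$, where $F$ is the function~\eqref{s_kappa_-}, i.e. the Cauchy transform of the finite nonnegative measure $\mu=\sigma+\sum_{j=1}^{p}a_j\delta_{t_j}$ with compact support in $\dR$; hence $F\in{\bf N}_{-\infty}$ and, comparing asymptotic expansions, $\mathfrak{s}_0=s_{-1}$, so Definition~\ref{deffunct+} applies.

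The analytic heart of the argument, and the step I expect to be the main obstacle, is~(i). My plan is to exhibit $\mathfrak{F}$ as a rational perturbation of a Markov function. Using the identity $\frac{1}{\lambda(t-\lambda)}=\frac{1}{t\lambda}+\frac{1}{t(t-\lambda)}$ one rewrites
\[
\mathfrak{F}(\lambda)=\frac{c}{\lambda}+\int\frac{d\nu(t)}{t-\lambda},\qquad d\nu=\frac{d\mu}{t},\quad c=\int\frac{d\mu(t)}{t}-s_{-1},
\]
so that $\mathfrak{F}$ is the Cauchy transform of a signed measure supported on $[0,1]\cup\{t_1,\dots,t_p\}$ that is nonnegative on $(0,1]$ and carries only the finitely many negative masses $a_j/t_j$ at the points $t_j<0$ (together with the mass $-c$ at $0$). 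Thus $\mathfrak{F}$ is a Markov function on $[0,1]$ perturbed by finitely many poles at the $t_j$, and the locally uniform convergence of its diagonal Padé approximants in $\widehat{\dC}\setminus([0,1]\cup\{t_k\}_{k=1}^{p})$ is exactly of the type settled in~\cite{Rakh} (and extended in~\cite{DD09}), just as for the $\mathbf{D}_{-\infty}^-$ case treated in the preceding corollary. The delicate point will be to control the spurious poles of the approximants near the isolated points $t_j$ and near the endpoint $0$; away from these the positivity of $\sigma$ on $[0,1]$ lets the classical Markov mechanism run.

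Granting~(i), the rest is bookkeeping via the theorems already available. Statement~(ii) is immediate from Theorem~\ref{PA_Dplus}: its convergence criterion~\eqref{Pcond3}, applied to the polynomials $P_j$, $Q_j$ of the measure $\sigma+\sum_{j=1}^{p}a_j\Theta(\cdot-t_j)$, is precisely the asserted supremum bound (the passage through the modified approximants of Proposition~\ref{quasipol} being already encapsulated there). Finally, statements~(iii) and~(iv) follow from Theorem~\ref{ThOnPD}, in which the boundedness of $\mathfrak{L}$ in~\eqref{ind_LU} and of $\mathfrak{L}=\mathfrak{L}(s_{-1})$ in~\eqref{UL} is shown to be equivalent to the convergence in~(i); hence both hold as soon as~(i) is in force.
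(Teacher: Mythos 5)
Your treatment of (ii)--(iv) coincides with the paper's: once (i) is granted, (ii) is read off from Theorem~\ref{PA_Dplus} and (iii)--(iv) from Theorem~\ref{ThOnPD}, and your verification that $\mathfrak F\in{\bf D}_{-\infty}^+$ via $\lambda\mathfrak F(\lambda)+\mathfrak s_0=F(\lambda)$ with $\mathfrak s_0=s_{-1}$ is exactly what is needed. The divergence is in (i). The paper does not reprove (i) at all: it cites \cite{DD07}, where convergence of diagonal Pad\'e approximants was established for generalized Nevanlinna functions of class ${\bf N}_\kappa$ with the asymptotic expansion~\eqref{asymp} (see the remark following the corollary); the function $\mathfrak F=\bigl(F(\lambda)-s_{-1}\bigr)/\lambda$ with $F\in{\bf S}^{-p}_{-\infty}$ is of this type, so the general theorem applies directly.

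Your alternative route for (i) has a genuine gap. The partial-fractions identity $\frac{1}{\lambda(t-\lambda)}=\frac{1}{t\lambda}+\frac{1}{t(t-\lambda)}$ produces $d\nu=d\mu/t$ and $c=\int t^{-1}d\mu-s_{-1}$, but nothing in the hypotheses guarantees $\int_0^1 t^{-1}d\sigma(t)<\infty$ (e.g.\ $\sigma$ may have an atom at $0$ or sufficient mass near $0$), so the decomposition need not exist. Even when it does, the resulting representation is a Markov function on $[0,1]$ plus finitely many poles, one of which sits at $\lambda=0$, i.e.\ at an endpoint of the support of the positive part, with a residue $-c$ of uncontrolled sign. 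Rakhmanov's theorem \cite{Rakh} (and the variant used for the preceding corollary) requires the rational perturbation to have its poles off the support of the measure; a pole \emph{on} $\partial\,\mathrm{supp}\,\sigma$ is precisely the configuration it does not cover, and this is where spurious poles can fail to be localized. You flag this as ``the delicate point'' but do not close it, so as written the argument for (i) is incomplete; the clean way out is the one the paper takes, namely to invoke the ${\bf N}_\kappa$ convergence theorem of \cite{DD07}, which handles the pole structure of $\mathfrak F$ without any integrability assumption on $d\sigma/t$.
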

\begin{proof}
Clearly, ${\mathfrak F}\in{\bf D}_{-\infty}^+$.
Actually, statement (i) for the class in question was proved in~\cite{DD07}.
Statement (ii) is an immediate consequence of statement (i) and Theorem~\ref{PA_Dplus}.
The rest follows from Theorem~\ref{ThOnPD}.
\end{proof}
\begin{rem}
1) In fact, in~\cite{DD07} we proved the convergence results for the
Pad\'e approximants
 to
${\bf N}_{\kappa}$-functions having the asymptotic
expansion~\eqref{asymp}.

2) As follows from~\eqref{eq:4.24}, ~\eqref{eq:4.24A} the function
$F$ in~\eqref{s_kappa_-} belongs to the class ${\bf S}^{-\kappa}$,
where $\kappa=p$, if
${F}(0-)=\int_0^1\frac{d\sigma(t)}{t}+\sum_{j=1}^{p}\frac{a_j}{t_j}\le
0$, and $\kappa=p+1$, if $0<{F}(0-)\le \infty$.
\end{rem}

\section{Examples}

\subsection{The appearance of the block structure}
In this subsection we give a family of Jacobi matrices such that the
Christoffel transform of these matrices leads to block tridiagonal
matrices.

Let us consider
a sequence $\{P_j\}_{j=0}^{\infty}$ of monic orthogonal polynomials satisfying the following three term recurrence relation
\[
\lambda P_j(\lambda) = P_{j+1}(\lambda) + c_{j-1}P_{j-1}(\lambda),\quad j\in\dZ_+,
\]
where $c_j>0$ and $P_{-1}(\lambda)=0$, $P_0(\lambda)=1$. For
example, if $c_j\equiv 1/4$, $j\in\dZ_+$, then $P_j$ are Chebyshev polynomials of second kind.
 Clearly, the polynomials $P_j$ are symmetric, i.e.
\begin{equation}\label{SP_property}
P_{j}(-\lambda)=(-1)^{j}P_{j}(\lambda),\quad j\in\dZ_+.
\end{equation}
Besides, the corresponding monic Jacobi matrix has the following
form
\[
J=\begin{pmatrix}
0  & 1 &       &\\
{c}_{0}   & 0    &{1}&\\
        &{c}_1    & 0 &\ddots\\
&       &\ddots &\ddots\\
\end{pmatrix}
\]
and is associated with the Markov function
\[
 F(\lambda)=\int_{-1}^{1}\frac{d\sigma(t)}{t-\lambda},
\]
where $d\sigma$ is a symmetric measure.

It follows from~\eqref{SP_property} that $P_{2j+1}(0)=0$ and $P_{2j}(0)\ne 0$ for every $j\in\dZ_+$.
The latter fact immediately implies that  ${\mathfrak n}_j=2j$, $j\in\dZ_+$, and, thus, ${\mathfrak k}_j=2$ for
$j\in\dZ_+$. Now, Proposition~\ref{prop31} gives the $LU$-factorization  $J=LU$, where
$L$ and $U$ are block lower and upper triangular matrices having the forms
\[
    L=\begin{pmatrix}
I_{\mathfrak{k}_{0}} & 0&       &\\
{L}_{1}   & I_{\mathfrak{k}_{1}}   & 0 &\\
        &L_2    & I_{\mathfrak{k}_{2}} &\ddots\\
&       &\ddots &\ddots\\
\end{pmatrix},\quad
U=\begin{pmatrix}
{U}_{0}   &{D}_{0}&       &\\
{0}   &U_{1}    &{D}_{1}&\\
        &0    &{U}_{2} &\ddots\\
&       &\ddots &\ddots\\
\end{pmatrix},
\]
which entries are as follows
\[
{L}_{j+1}=
\begin{pmatrix}
c_{2j+1}&0\\
0&0\\
\end{pmatrix},\quad
{U}_j= \begin{pmatrix} 0 & 1\\
              c_{2j}& 0 \end{pmatrix},\quad
    {D}_{j}=
\begin{pmatrix}
0&0\\
1&0\\
\end{pmatrix},\quad j\in\dZ_+.
\]
Multiplying $L$ and $U$ the other way around, we arrive at the monic generalized Jacobi matrix
\[
\mathfrak{J}=UL=\begin{pmatrix}
\mathfrak{B}_{0}   &\mathfrak{D}_{0}&       &\\
\mathfrak{C}_{0}   &\mathfrak{B}_1    &\mathfrak{D}_{1}&\\
        &\mathfrak{C}_1    &\mathfrak{B}_{2} &\ddots\\
&       &\ddots &\ddots\\
\end{pmatrix},
\]
where the entries have the following form
\[
{\mathfrak B}_j=\begin{pmatrix} 0 & 1\\
              c_{2j} & 0 \end{pmatrix},\quad
\mathfrak{C}_{j}=
\begin{pmatrix}
0&0\\
{c}_{j+1}c_{j+2}&0\\
\end{pmatrix},\quad
\mathfrak{D}_{j}=
\begin{pmatrix}
0&0\\
1&0\\
\end{pmatrix},\quad j\in\dZ_+.
\]
Finally, observe that $F$ can be represented as follows
\[
 F(\lambda)=\lambda\int_{0}^{1}\frac{d\rho(t)}{t-\lambda^2}\in {\bf N}.
\]
Thus, due to Theorem~\ref{Chris}, we have that $\mathfrak{J}$ is associated with the function
\[
 \lambda F(\lambda)+1=\lambda^2\int_{0}^{1}\frac{d\rho(t)}{t-\lambda^2}
 +1=\int_{0}^{1}\frac{td\rho(t)}{t-\lambda^2}\in {\bf D}_{-\infty}^-.
\]

\subsection{Unboundedness of the Christoffel transform}
Here we give an example of a bounded Jacobi matrix such that its Christoffel transform is an unbounded matrix.

Consider the following monic $2$-periodic Jacobi matrix (see~\cite[Example~1]{DD09})
\[
J=\begin{pmatrix}
a_0       & 1  &       &    &   \\
1       & a_1   & 1   &    &  \\
          & 1   & a_2   &  \ddots \\
 &       & \ddots & \ddots        \\
\end{pmatrix},
\quad a_n=\frac{(-1)^n+1}{2}, \quad n\in\dZ_+,
\]
associated with the function
\[
 F(\lambda)=-\frac{\lambda}{2}+\frac{1}{2}\sqrt{\frac{\lambda(\lambda^2-\lambda-4)}{\lambda-1}},
\]
where the branch is chosen so that $F(\lambda)\to 0$ as $\lambda\to\infty$ (see~\cite{DD09}).

Let us find the $LU$-factorization of $J$, that is, let us represent $J$ as follows
\[
J=LU=\begin{pmatrix}
u_1       & 1   &       &    &   \\
u_1l_1      & u_2+l_1   & 1   &    &  \\
          & u_2 l_2  & u_3+l_2  &  \ddots \\
 &       & \ddots & \ddots        \\
\end{pmatrix}.
\]
Clearly, we have the following relations
\[
u_1=1,\quad u_kl_k=1, \quad u_{2k}+l_{2k-1}=0,\quad u_{2k+1}+l_{2k}=1,\quad k\in\dN.
\]
Next, by induction we have that $u_1=1$, $l_1=1$,  $u_2=-1$, $l_2=-1$, and
\[
 u_{2k+1}=k+1,\quad u_{2k+2}=-\frac{1}{k+1},\quad l_{2k+1}=\frac{1}{k+1},\quad l_{2k+2}=-{(k+1)},\quad k\in\dN.
\]
 Therefore, the operators $L$ and $U$ in $LU$-factorization of $J$ are unbounded.
 Moreover, the Christoffel  transformation
\[
 J_C=UL=
\begin{pmatrix}
u_1+l_1      & 1   &       &    &   \\
u_2l_1      & u_2+l_2   & 1   &    &  \\
          & u_3 l_2  & u_3+l_3  &  \ddots \\
 &       & \ddots & \ddots        \\
\end{pmatrix}=
\begin{pmatrix}
2     & 1   &       &    &   \\
-1      & -2   & 1   &    &  \\
          & -2  & 5/2   &  \ddots \\
 &       & \ddots & \ddots        \\
\end{pmatrix}
\]
is also an unbounded monic generalized Jacobi matrix associated with
the function
\[
\lambda F(\lambda)+1=
-\frac{\lambda^2}{2}+1+\frac{\lambda}{2}\sqrt{\frac{\lambda(\lambda^2-\lambda-4)}{\lambda-1}}
\in{\bf D}_{-\infty}^-.
\]

\end{document}